\newcommand{\bch}{\bar{\mathcal{H}}}
\newcommand{\hb}[1]{\textcolor{blue}{#1}}
\def\mr{{\mathbb  R}}
\newcommand{\beq}{\begin{equation}}
\newcommand{\eeq}{\end{equation}}
\newcommand{\E}{\mathbb E}
\newcommand{\R}{\mathbb R}
\newcommand{\ca}{\mathcal A}
\newcommand{\cac}{\mathcal C}
\newcommand{\cf}{\mathcal F}
\newcommand{\ch}{\mathcal H}
\newcommand{\ck}{\mathcal K}
\newcommand{\cl}{\mathcal L}
\newcommand{\cs}{\mathcal S}
\newcommand{\al}{\alpha}
\newcommand{\ep}{\varepsilon}
\newcommand{\ga}{\gamma}
\newcommand{\oom}{\Omega}
\newcommand{\vp}{\varphi}
\newcommand{\lp}{\big(}
\newcommand{\rp}{\big)}
\newcommand{\lc}{\big[}
\newcommand{\rc}{\big]}
\newcommand{\lcl}{\big\{}
\newcommand{\rcl}{\big\}}
\newcommand{\lln}{\big|}
\newcommand{\rrn}{\big|}
\numberwithin{equation}{section}
 \theoremstyle{plain}
  \newtheorem{thm}{Theorem}[section]
   \theoremstyle{plain}
  \theoremstyle{remark}
  \newtheorem{rem}[thm]{Remark}
  \theoremstyle{definition}
  \newtheorem*{notation*}{Notation}
\newtheorem{notation}[thm]{Notation}
  \theoremstyle{plain}
  \newtheorem{prop}[thm]{Proposition}
    \theoremstyle{plain}
  \newtheorem{lem}[thm]{Lemma}
  \theoremstyle{plain}
  \newtheorem{cor}[thm]{Corollary}
  \theoremstyle{definition}
  \newtheorem{defn}[thm]{Definition}
   \theoremstyle{plain}
   \theoremstyle{plain}
  \newtheorem*{uha*}{Uniform Hypoellipticity Assumption}
     \theoremstyle{definition}
      \newtheorem*{uell*}{Uniform Ellipticity Assumption}
     \theoremstyle{definition}
  \newtheorem{conv}[thm]{Convention}
\begin{document}

\begin{frontmatter}
\title{Precise Local Estimates for Differential Equations driven by Fractional Brownian Motion: Hypoelliptic Case}
\runtitle{Density Estimates for Hypoelliptic SDEs driven by fBM}

\begin{aug}
\author{\fnms{Xi} \snm{Geng}\thanksref{m1}\ead[label=e1]{xi.geng@unimelb.edu.au}},
\author{\fnms{Cheng} \snm{Ouyang}\thanksref{m2}\ead[label=e2]{couyang@math.uic.edu}}
\and
\author{\fnms{Samy} \snm{Tindel}\thanksref{m3}
\ead[label=e3]{stindel@purdue.edu}
\ead[label=u1,url]{http://www.foo.com}}

\runauthor{X. GENG, C. OUYANG AND S. TINDEL}

\affiliation{University of Melbourne\thanksmark{m1}, University of Illinois at Chicago\thanksmark{m2} and \\Purdue University\thanksmark{m3}}

\address{X. Geng\\School of Mathematics and Statistics\\University of Melbourne\\
813 Swanston Street\\
Parkville, VIC 3010\\
Australia\\
\printead{e1}\\
\phantom{E-mail:\ }}

\address{C. Ouyang\\Department of Mathematics, Statistics\\ and Computer Science\\University of Illinois at Chicago\\
851 S Morgan St\\
Chicago  IL 60607\\United States\\
\printead{e2}\\
\phantom{E-mail:\ }}

\address{S. Tindel\\Department of Mathematics\\Purdue University\\
150 N. University Street\\
West Lafayette, IN 47907-2067\\United States\\
\printead{e3}\\
\phantom{E-mail:\ }}

\end{aug}

\begin{abstract}
This article is concerned with stochastic differential equations driven by a $d$ dimensional fractional Brownian motion with Hurst parameter $H>1/4$, understood in the rough paths sense. Whenever the coefficients of the equation satisfy a uniform hypoellipticity condition, we establish a sharp local estimate on the associated control distance function and a sharp local lower estimate on the density of the solution. Our methodology relies heavily on the rough paths structure of the equation.
\end{abstract}

\begin{keyword}[class=MSC]
\kwd{60H10}
\kwd{60H07}
\kwd{60G15}
\end{keyword}

\begin{keyword}
\kwd{Rough paths}
\kwd{Malliavin calculus}
\kwd{Fractional Brownian motion}
\end{keyword}

\end{frontmatter}

\section{Introduction}

We will split our introduction into two parts. In Section \ref{sec:background}, we recall some background on the stochastic analysis of stochastic differential equations driven by a fractional Brownian motion. In Section \ref{sec: main results} we describe our main results.  Section \ref{sec: strategy} is then devoted to a brief explanation about the methodology we have used in order to obtain our main results.

\subsection{Background and motivation}\label{sec:background}
One way to envision Malliavin calculus is to see it as a geometric and analytic framework on an infinite dimensional space (namely the Wiener space) equipped with a Gaussian measure. This is already apparent in Malliavin's seminal contribution \cite{Ma78} giving a probabilistic proof of H\"ormander's theorem. The same point of view has then been  pushed forward in the celebrated series of papers by Kusuoka and Stroock~\cite{KS84,KS84b,KS87}, which set up the basis for densities and probabilities expansions for diffusion processes within this framework.

On the other hand, the original perspective of Lyons' rough path theory (cf. \cite{Lyons94, Lyons98}) is quite different. Summarizing very briefly, it asserts that a reasonable differential calculus with respect to a noisy process $X$ can be achieved as long as one can define enough iterated integrals of $X$. One of the first processes to which the theory has been successfully applied is a fractional Brownian motion $B$ with Hurst parameter $H$, on which we shall focus in the present paper. 
The process $B$ can be seen as a natural generalization of Brownian motion allowing any kind of H\"older regularity. 
We are interested in the following differential equation driven by $B$:
\begin{equation}\label{eq: hypoelliptic SDE}
\begin{cases}
dX_{t}=\sum_{\alpha=1}^{d}V_{\alpha}(X_{t})dB_{t}^{\alpha}, & 0\leq t\leq1,\\
X_{0}=x\in\mathbb{R}^{N}.
\end{cases}
\end{equation}
Here the $V_\alpha$'s are $C_b^\infty$ vector fields, and the Hurst parameter is assumed to satisfy $H>1/4$. In this setting, putting together the results contained in \cite{CQ} and \cite{Lyons98}, the stochastic differential equation \eqref{eq: hypoelliptic SDE} can be understood in the framework of rough path theory. 

With the solution of \eqref{eq: hypoelliptic SDE} in hand, a natural problem one can think of is the following: can we extend the aforementioned analytic studies on Wiener's space to the process $B$? In particular can we complete Kusuoka-Stroock's program in the fractional Brownian motion setting? This question has received a lot of attention in the recent years, and previous efforts along this line include H\"ormander type theorems for the process $X$ defined by \eqref{eq: hypoelliptic SDE} (cf. \cite{BH07,CF10,CHLT15}), some upper Gaussian bounds on the density $p(t,x,y)$ of $X_{t}$ (cf. \cite{BNOT16,GOT}), as well as Varadhan type estimates for $\log(p(t,x,y))$ in small time \cite{BOZ15}. One should stress at this point that the road from the Brownian to the fractional Brownian case is far from being trivial. This is essentially due to the lack of independence of the fBm increments and Markov property,  as well as to the technically demanding characterization of the Cameron-Martin space whenever $B$ is not a Brownian motion. We shall go back to those obstacles throughout the article.

Our contribution can be seen as a step in the direction mentioned above. More specifically, we shall obtain  a sharp local estimate on the associated control distance function and some sharp local bounds for the density of $X_{t}$ under hypoelliptic conditions on the vector fields $V_{\al}$. This will be achieved thanks to a combination of geometric and analytic tools which can also be understood as a mix of stochastic analysis and rough path theory. We describe our main results more precisely in the next subsection.

\subsection{Statement of main results}\label{sec: main results}

Let us recall that equation \eqref{eq: hypoelliptic SDE} is our main object of concern.
We are typically interested in the degenerate case where the vector fields $V=\{V_1,\ldots,V_d\}$ satisfy the \textit{uniform hypoellipticity} assumption to be defined shortly. This is a standard degenerate setting where one can expect that the solution of the SDE (\ref{eq: hypoelliptic SDE}) admits a smooth density with respect to the Lebesgue measure. As mentioned in Section \ref{sec:background}, we wish to obtain quantitative information for the density in this context.

We first formulate the uniform hypoellipticity condition which will be assumed throughout the rest of the paper. For $l\geq1,$ define $\mathcal{A}(l)$ to be the set of words over letters $\{1,\ldots,d\}$ with length at most $l$ (including the empty word), and $\mathcal{A}_1(l)\triangleq \mathcal{A}(l)\backslash\{\emptyset\}$. Denote $\mathcal{A}_1$ as the set of all non-empty words. Given a word $\alpha\in\ca_{1}$, we define the vector field $V_{[\alpha]}$ inductively by $V_{[i]}\triangleq V_i$ and $V_{[\alpha]}\triangleq[V_i,V_{[\beta]}]$ for $\alpha=(i,\beta)$ with $i$ being a letter and $\beta\in\ca_{1}$.
\begin{uha*}
The vector fields $(V_{1},\ldots,V_{d})$ are $C_{b}^{\infty}$, and there exists an integer $l_{0}\geq1$, such that
\begin{equation}\label{eq:unif-hypo-assumption}
\inf_{x\in\mathbb{R}^N}\inf_{\eta\in S^{N-1}}\lcl  \sum_{\alpha\in\mathcal{A}_1(l_0)}\langle V_{[\alpha]}(x),\eta\rangle_{\mathbb{R}^N}^{2}\rcl >0.
\end{equation}
The smallest such $l_0$ is called the hypoellipticity constant for the vector fields.
\end{uha*}


Under condition \eqref{eq:unif-hypo-assumption}, it was proved by Cass-Friz \cite{CF10} and Cass-Hairer-Litterer-Tindel \cite{CHLT15} that the solution to the SDE (\ref{eq: hypoelliptic SDE}) admits a smooth density $y\mapsto p(t,x,y)$ with respect to the Lebesgue measure on $\mathbb{R}^N$ for all $(t,x)\in (0,1]\times\mathbb{R}^N$. Our contribution aims at getting quantitative small time estimates for $p(t,x,y)$.

In order to describe our bounds on the density $p(t,x,y)$, let us recall that the small time behavior of $p(t,x,y)$ is closely related to the so-called control distance function associated with the vector fields. This fact was already revealed in the Varadhan-type asymptotics result proved by Baudoin-Ouyang-Zhang \cite{BOZ15}:
\begin{equation}\label{eq:varadhan-estimate}
\lim_{t\rightarrow0}t^{2H}\log p(t,x,y)=-\frac{1}{2}d(x,y)^{2}.
\end{equation}
The control distance function $d(x,y)$ in \eqref{eq:varadhan-estimate}, which plays a prominent role in our paper, is defined as follows. 
\begin{defn}\label{def: CM bridge}
Let $\bar{\mathcal{H}}$ be the Cameron-Martin space of the fractional Brownian motion. For any $h\in\bar{\mathcal{H}}$, denote by $\Phi_t(x;h)$ the solution to the ODE 
\begin{align}\label{eq: skeleton ODE}
dx_{t}=\sum_{\alpha=1}^{d}V_{\alpha}(x_{t})dh_{t}^{\alpha}, \quad 0\leq t\leq1,\quad\mathrm{with}\ x_{0}=x. \end{align}
In addition, for $x,y\in\mathbb{R}^N$ and $\Phi_t(x;h)$ defined as in \eqref{eq: skeleton ODE} set 
\begin{equation}\label{eq: Pi_xy}
\Pi_{x,y}\triangleq\lcl  h\in\bar{\mathcal{H}}:\Phi_{1}(x;h)=y\rcl  
\end{equation}
to be the space of Cameron-Martin paths which \textit{join} $x$ \textit{to} $y$ \textit{in the sense of differential equations}. The  \textit{control distance function} $d(x,y)=d_{H}(x,y)$ is defined by 
\[
d(x,y)\triangleq\inf\lcl  \|h\|_{\bar{{\cal H}}}:h\in\Pi_{x,y}\rcl  ,\ \ \ x,y\in\mathbb{R}^{N}.
\]
\end{defn}

According to \eqref{eq:varadhan-estimate}, one can clearly expect that the Cameron-Martin structure and the control distance function will play an important role in understanding the small time behavior of $p(t,x,y)$. However, unlike the diffusion case and due to the complexity of the Cameron-Martin structure of a fractional Brownian motion, the function $d(x,y)$ is far from being a metric and its shape is not clear. Our first main result is thus concerned with the local behavior of $d(x,y)$. It establishes a comparison between $d$ and  the Euclidian distance. 

\begin{thm}\label{thm: local comparison}
Under the same assumptions as in Theorem \ref{thm:equiv-distances}, let $l_0$ be the hypoellipticity constant in assumption \eqref{eq:unif-hypo-assumption} and $d$ be the control distance given in Definition \ref{def: CM bridge}.
There exist constants $C_1,C_2,\delta>0$, where $C_1,C_2$ depend only on $H,l_0$ and the vector fields, and where $\delta$ depends only on $l_0$ and the vector fields, such that 
\begin{equation}\label{eq:local-comparison}
C_{1}|x-y|\leq d(x,y)\leq C_{2}|x-y|^{\frac{1}{l_{0}}} \, ,
\end{equation}
for all $x,y\in \mathbb{R}^N$ with $|x-y|<\delta$.
\end{thm}

With the technique we use to prove Theorem \ref{thm: local comparison} together with some further effort, we are in fact able to establish a stronger result, namely, the local equivalence of $d$ to the sub-Riemannian distance induced by the vector fields $\{V_1,\ldots, V_d\}$. More specifically, let us write the distance given in Definition~\ref{def: CM bridge} as $d_H(x,y)$, in order to emphasize the dependence on the Hurst parameter $H$. Our second main result asserts that all the distances $d_{H}$ are locally equivalent. 

\begin{thm}\label{thm:equiv-distances}
Assume that the vector fields $(V_{1},\ldots,V_{d})$ satisfy the uniform hypoellipticity condition \eqref{eq:unif-hypo-assumption}. For $H\in (1/4,1)$, consider the distance $d_{H}$ given in Definition \ref{def: CM bridge}.
Then
for any $H_1, H_2\in (1/4,1)$, there exist  constants $C=C(H_1,H_2, V)>0$ and $\delta>0$ such that
\begin{equation}\label{eq:equiv-distances}
\frac{1}{C}d_{H_1}(x,y)\leq d_{H_2}(x,y)\leq Cd_{H_1}(x,y),
\end{equation}
for all $x,y\in\mathbb{R}^N$ with $|x-y|<\delta$.  In particular, all distances $d_H$ are locally equivalent to $d_{\textsc{BM}}\equiv d_{1/2}$, where $d_{\textsc{BM}}$ stands for the controlling distance of the system \eqref{eq: skeleton ODE} driven by a Brownian motion, i.e. the sub-Riemannian distance induced by the vector fields $\{V_1,...,V_d\}$.
\end{thm}

The local equivalence of distances stated in Theorem \ref{thm:equiv-distances} plays a crucial role in our later analysis of the density $p(t,x,y)$, in particular, in the proof of Theorem \ref{thm: local lower estimate} below (e.g. see the proof of Lemma \ref{lem: estimate vol m_xx} below). Moreover, we believe Theorem \ref{thm:equiv-distances} and the tools we developed in order to prove this theorem may be of independent interest and have other applications as well.

\begin{rem}\label{rmk:density-signature}
In the special case when \eqref{eq: hypoelliptic SDE} reads as 
$dX_t=X_t\otimes dB_t,$
that is, when $X_t$ is the truncated signature of $B$ up to order $l>0$, it is proved in \cite{BFO19} that all $d_H(x,y)$ are globally equivalent. The proof crucially depends on the fact that the signature of $B$ is homogeneous with respect to the dilation operator on $G^{(l)}(\mr^d)$, the free nilpotent Lip group over $\mr^d$ of order $l$. In the current general nonlinear case, the local equivalence is much more technically challenging.  In addition, we think that the global equivalence of the distances $d_{H}$ does not hold.
\end{rem}

Our third main result asserts that the density $p(t,x,y)$ of $X_{t}$ is strictly positive everywhere whenever $t>0$. It generalizes for the first time the result of \cite[Theorem 1.4]{BNOT16} to a general hypoelliptic case, by affirming that Hypothesis 1.2 in that theorem is always verified under our assumption \eqref{eq:unif-hypo-assumption}. Recall that a \textit{distribution} over a differentiable manifold is a smooth choice of subspace of the tangent space at every point with constant dimension.  

\begin{thm}\label{prop: positivity-intro}
Let $\{V_{1},\ldots,V_{d}\}$ be a family of $C_{b}^{\infty}$-vector
fields on $\mathbb{R}^{N},$ which span a distribution over $\mathbb{R}^N$ and satisfy the uniform hypoellipticity assumption \eqref{eq:unif-hypo-assumption}.  Let $X_{t}^{x}$
be the solution to the stochastic differential equation~\eqref{eq: hypoelliptic SDE},
where $B_{t}$ is a $d$-dimensional fractional Brownian motion with
Hurst parameter $H>1/4$. Then for each $t\in(0,1]$, the density
of $X_{t}$ is everywhere strictly positive.
\end{thm}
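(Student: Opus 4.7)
My plan is to reduce the problem to the general positivity criterion for densities on Wiener space from \cite[Theorem 1.4]{BNOT16}. That criterion says $p(t,x,y)>0$ as soon as Hypothesis 1.2 there holds, namely: there exists a Cameron-Martin path $h^{*}\in\Pi_{x,y}$ (in the sense of Definition \ref{def: CM bridge}) such that the deterministic Malliavin covariance matrix
\[
\gamma(h^{*})\triangleq\langle D\Phi_{1}(x;h^{*}),D\Phi_{1}(x;h^{*})\rangle_{\mathcal{H}}
\]
is invertible, or equivalently the tangent map $D\Phi_{1}(x;\cdot):\bar{\mathcal{H}}\to\mathbb{R}^{N}$ is surjective at $h^{*}$. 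The entire proof amounts to exhibiting such an $h^{*}$ for an arbitrary $y\in\mathbb{R}^{N}$.

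I would split this task into two steps. First, I would show that $\Pi_{x,y}\neq\emptyset$. For $y$ close to $x$ this is already an immediate consequence of Theorem \ref{thm: local comparison}; for general $y$, the uniform hypoellipticity assumption \eqref{eq:unif-hypo-assumption} combined with the standard Chow-Rashevsky theorem produces a piecewise-smooth horizontal path from $x$ to $y$, and smooth paths with compactly supported derivatives belong to $\bar{\mathcal{H}}$ (using $W_{0}^{1,2}\subseteq\bar{\mathcal{H}}$ when $H\le 1/2$ and the fractional-calculus description of $\bar{\mathcal{H}}$ when $H>1/2$). A concatenation of such horizontal pieces produces some $h_{0}\in\Pi_{x,y}$.

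Second, I would upgrade $h_{0}$ to a path $h^{*}\in\Pi_{x,y}$ at which the tangent map is surjective by a finite-dimensional Sard-type argument. Choose a large finite family of perturbations $e_{1},\dots,e_{M}\in\bar{\mathcal{H}}$, supported in a short interval near $t=1$ and designed via the bracket structure appearing in \eqref{eq:unif-hypo-assumption} so that their images under $D\Phi_{1}(x;h_{0})$ exhaust $\mathrm{Span}\{V_{[\alpha]}(y):\alpha\in\mathcal{A}_{1}(l_{0})\}=T_{y}\mathbb{R}^{N}$. Consider the smooth, finite-dimensional endpoint map
\[
\Psi:\mathbb{R}^{M}\to\mathbb{R}^{N},\qquad \Psi(\lambda)=\Phi_{1}\bigl(x;\,h_{0}+\textstyle\sum_{i=1}^{M}\lambda_{i}e_{i}\bigr).
\]
By construction $\Psi(0)=y$, and the distribution-spanning hypothesis forces the rank of $d\Psi$ to be $N$ on a nonempty open subset of $\mathbb{R}^{M}$. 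Sard's theorem then guarantees that the set of regular values of $\Psi$ is open and dense in $\Psi(\mathbb{R}^{M})$; a connectedness/homotopy argument inside the finite-dimensional fiber $\Psi^{-1}(y)$ delivers a parameter $\lambda^{*}$ with $\Psi(\lambda^{*})=y$ and $d\Psi(\lambda^{*})$ of full rank. Setting $h^{*}=h_{0}+\sum_{i}\lambda_{i}^{*}e_{i}$, the surjectivity of $d\Psi(\lambda^{*})$ implies the surjectivity of $D\Phi_{1}(x;h^{*})$ on $\bar{\mathcal{H}}$, so $\gamma(h^{*})$ is invertible.

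The main obstacle is the very last step: Sard only yields density of regular \emph{values}, whereas we need to hit the specific target $y$ with a regular parameter. The distribution-spanning assumption is what makes this workable, since constancy of rank of $\mathrm{Span}\{V_{\alpha}\}$ keeps the local geometry near $y$ clean and allows a smooth, finite-codimensional description of the constraint set $\{h:\Phi_{1}(x;h)=y\}$ in a neighborhood of $h_{0}$. Together with the nondegeneracy of brackets provided by \eqref{eq:unif-hypo-assumption} near the endpoint $t=1$, one can deform $h_{0}$ along a one-parameter family that remains in $\Pi_{x,y}$ but passes through a regular point of $\Psi$, at which $\gamma$ becomes invertible. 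Verifying this Hypothesis 1.2 is exactly what allows Theorem \ref{prop: positivity-intro} to follow from \cite[Theorem 1.4]{BNOT16}.
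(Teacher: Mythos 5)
Your overall strategy coincides with the paper's: reduce to the positivity criterion of \cite[Theorem 1.4]{BNOT16}, use Chow--Rashevskii under \eqref{eq:unif-hypo-assumption} to see that $\Pi_{x,y}\neq\emptyset$, and use Sard's theorem to produce a joining path at which the endpoint map $F(h)=\Phi_1(x;h)$ is a submersion. However, your second step has two genuine problems. First, perturbations $e_1,\dots,e_M$ supported near $t=1$ cannot be ``designed via the bracket structure'' so that $D\Phi_1(x;h_0)e_i$ span $T_y\mathbb{R}^N$: by the standard variation formula $D_e\Phi_1(x;h_0)=J_1\int_0^1 J_s^{-1}V(\Phi_s(x;h_0))\,de_s$, if $h_0$ is flat on the common support of the $e_i$ then all these derivatives lie in $\mathrm{Span}\{V_1(y),\dots,V_d(y)\}$, which in the genuinely hypoelliptic case ($l_0\geq 2$) is a proper subspace of $\mathbb{R}^N$; the brackets appear only at second and higher order in the perturbation parameters, so first-order surjectivity cannot be arranged simply by choosing supports and directions.

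More importantly, the step you yourself flag as the main obstacle is a real gap, not a removable technicality. Sard's theorem applied to your finite-dimensional map $\Psi$ only guarantees that almost every \emph{value} is regular; the prescribed target $y$ may well be a critical value of that particular family, in which case every $\lambda\in\Psi^{-1}(y)$ is a critical point and no connectedness or homotopy argument inside the fiber can produce a regular one --- the constant-rank (distribution) hypothesis does not repair this, and enlarging the family is not shown to help. The paper closes exactly this gap by a different device: it applies Sard to the piecewise-linear endpoint maps $F_n=F\circ H_n$ and uses Chow--Rashevskii only to see that $\bigcup_n F_n((\mathbb{R}^d)^n)=\mathbb{R}^N$, which yields \emph{some} point $y_0$ (not the prescribed target) together with a path $h_0$ at which $(dF)_{h_0}$ is surjective; it then transports surjectivity to an arbitrary $y$ by concatenation, choosing $\alpha\in\bar{\mathcal{H}}$ driving the flow from $y_0$ to $y$ and using the chain rule $(dF)_{h_0\sqcup\alpha}=J_1(F(h_0);\alpha)\circ(dF)_{h_0}$ together with the invertibility of the Jacobian $J_1$. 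Some equivalent of this translation/concatenation argument is the missing idea in your proposal; without it you only obtain positivity at Lebesgue-almost every $y$, not at every $y$.
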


As we will see in Section \ref{section: step 1}, the proof of the above result is  based on finite dimensional geometric arguments such as the classical Sard theorem, as well as a general positivity criteria for densities on the Wiener space. 
After our preprint was released, further results on strict positivity were obtained very recently in \cite{IP} using the notion of $\ck$-regularity. However, the latter reference focuses exclusively on the strict positivity of density for Gaussian rough differential equations. In contrast, the proof of Theorem~\ref{prop: positivity-intro} comes as a natural and short byproduct of our global approach.

Let us now turn to a description of our last main result. It establishes a sharp local lower estimate for the density function $p(t,x,y)$ of the solution to the SDE (\ref{eq: hypoelliptic SDE}) in small time. 

\begin{thm}\label{thm: local lower estimate}
Under the uniform hypoellipticity assumption (\ref{eq:unif-hypo-assumption}), let $p(t,x,y)$ be the density of the random variable $X_{t}$ defined by equation \eqref{eq: hypoelliptic SDE}.
There exist some constants $C,\tau>0$ depending only on $H,l_0$ and the vector fields $V_{\al}$, such that 
\begin{equation}\label{eq:local-density-bound}
p(t,x,y)\geq\frac{C}{|B_{d}(x,t^{H})|},
\end{equation}
for all $(t,x,y)\in(0,1]\times\mathbb{R}^N\times\mathbb{R}^N$ satisfying the following local condition involving the distance $d$ introduced in Definition \ref{def: CM bridge}:
\begin{equation*}
d(x,y)\leq t^H,
\quad\text{and}\quad
t<\tau .
\end{equation*}
In relation \eqref{eq:local-density-bound}, $B_{d}(x,t^{H})\triangleq\{ z\in\mathbb{R}^{N}:d(x,z)<t^{H}\}$ denotes the ball with respect to the distance $d$ and $|\cdot|$ stands for the Lebesgue measure.
\end{thm}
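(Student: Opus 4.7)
My plan is to reduce the density lower bound to a fixed-scale computation via the self-similarity of $B$, perform a Cameron-Martin shift to recenter the problem at $x$, and then rely on the truncated signature of $B$ as a polynomial approximation of the solution. Using the scaling identity $\Phi_t(x;B) \stackrel{\mathrm{law}}{=} \Phi_1(x;\varepsilon B)$ with $\varepsilon \triangleq t^H$, one rewrites $p(t,x,y) = \mathbb{E}[\delta_y(\Phi_1(x;\varepsilon B))]$. The hypothesis $d(x,y) \leq t^H$ together with Definition~\ref{def: CM bridge} yields some $h \in \Pi_{x,y}$ with $\|h\|_{\bar{\mathcal{H}}} \leq 2\varepsilon$; Girsanov's theorem on $\bar{\mathcal{H}}$ then gives
\[
p(t,x,y) \geq C \, \varepsilon^{-N} \, \mathbb{E}\!\left[\delta_0(X^\varepsilon(h)) \, e^{-I(h/\varepsilon)}\right],
\]
where $X^\varepsilon(h) \triangleq \varepsilon^{-1}(\Phi_1(x;\varepsilon B + h) - y)$ and $I$ denotes the Wiener integral. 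It remains to prove that this expectation is bounded below by a positive constant uniformly over $h$ with $\|h\|_{\bar{\mathcal{H}}} \leq 2\varepsilon$, and to account for the volume prefactor.

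The core step is to replace $\Phi_1(x;\varepsilon B + h) - x$ by a rough Taylor expansion in the truncated signature $S_l(\varepsilon B + h)$ of a suitable order $l \geq l_0 + 1$. One writes
\[
\Phi_1(x;\varepsilon B + h) - x = F_l\bigl(\log S_l(\varepsilon B + h), x\bigr) + R^\varepsilon(h),
\]
where $F_l : \mathfrak{g}^{(l)} \times \mathbb{R}^N \to \mathbb{R}^N$ is the polynomial built from iterated vector fields acting on $x$, and $R^\varepsilon(h)$ is a rough path remainder of order $\varepsilon^{Hl}$. Under the uniform hypoellipticity assumption \eqref{eq:unif-hypo-assumption}, the map $F_l(\cdot\,, x)$ admits, after the standard anisotropic dilation on $\mathfrak{g}^{(l)}$ weighted by bracket depth, a uniformly non-degenerate right inverse onto $\mathbb{R}^N$ (this is essentially the finite-dimensional content of H\"ormander's condition). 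Combined with the fact that the rescaled signature $\delta_{\varepsilon^{-1}} S_l(\varepsilon B)$ has the law of the unit-time signature of $B$, whose density on $G^{(l)}$ is smooth and strictly positive near the identity, this gives a uniform lower bound for the density of the approximating random variable $x + F_l(\log S_l(\varepsilon B + h), x)$ at any $y$ with $|x-y| \lesssim \varepsilon$.

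The hardest part will be to propagate this lower bound through both the remainder $R^\varepsilon(h)$ and the Girsanov density $e^{-I(h/\varepsilon)}$, uniformly in $h$ with $\|h\|_{\bar{\mathcal{H}}} \leq 2\varepsilon$. In the elliptic setting the Malliavin covariance matrix of $X^\varepsilon(h)$ is uniformly non-degenerate, and the argument reduces to a direct integration by parts; under hypoellipticity this matrix degenerates as $\varepsilon \to 0$, so one instead performs Malliavin calculus on the signature-valued variable after the anisotropic rescaling above, where the rescaled Malliavin matrix is uniformly non-degenerate, and controls $R^\varepsilon(h)$ in appropriate Malliavin-Sobolev norms. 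The exponential Girsanov factor is bounded below by splitting $I(h/\varepsilon)$ into a component in the Gaussian span of the approximating variable and an independent orthogonal component in the first Wiener chaos. Finally, the volume factor in \eqref{eq:local-density-bound} is matched via Theorem~\ref{thm:equiv-distances} and the classical ball-box theorem for sub-Riemannian geometry: $|B_d(x, t^H)| \asymp t^{H Q(x)}$ for the local homogeneous dimension $Q(x)$ at $x$, and the Jacobian of the anisotropic dilation on $\mathfrak{g}^{(l)}$ produces precisely the factor $\varepsilon^{Q(x)}$ needed to combine with the prefactor $\varepsilon^{-N}$ and produce the claimed bound $C/|B_d(x, t^H)|$.
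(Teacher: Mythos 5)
Your outline is essentially the elliptic strategy (scaling, Cameron--Martin shift to an $h\in\Pi_{x,y}$ with $\|h\|_{\bar{\mathcal H}}\le 2\varepsilon$, then a uniform lower bound on $\mathbb{E}[\delta_0(X^\varepsilon(h))e^{-I(h/\varepsilon)}]$), patched with an ``anisotropic rescaling'' to fix the degeneracy. This is not what the paper does, and the patch has genuine gaps. First, the prefactor bookkeeping is wrong: the Jacobian of the dilation $\delta_\varepsilon$ on $\mathfrak g^{(l)}$ is $\varepsilon^{\nu}$ with $\nu=\sum_k k\dim\mathcal L_k$, the homogeneous dimension of the \emph{free} nilpotent algebra of order $l$, which is not the local homogeneous dimension $Q(x)$ of the sub-Riemannian structure generated by $\{V_1,\dots,V_d\}$; moreover under the uniform hypoellipticity assumption alone (no equiregularity) you cannot assert $|B_d(x,t^H)|\asymp t^{HQ(x)}$. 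Passing from the signature-density rate $\varepsilon^{-H\nu}$ to the claimed rate $1/|B_d(x,t^H)|$ requires integrating the signature density over the fiber $M_{x,y}=\{u: x+F_l(u,x)=y\}$ and comparing the volume of small CC-balls in the loop manifold $M_{x,x}$ with $1/|B_d(x,t^H)|$; this is exactly the disintegration formula, the change of variables $S_{x,y}:M_{x,y}\to M_{x,x}$ with two-sided CC-estimates (which themselves use Theorem \ref{thm:equiv-distances}), and the Kusuoka--Stroock volume lemma in the paper --- none of which your dilation-Jacobian argument replaces.

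Second, the central quantitative step is missing. Under hypoellipticity the Malliavin covariance of $X^\varepsilon(h)$ degenerates as $\varepsilon\to 0$, so $\mathbb{E}[\delta_0(X^\varepsilon(h))e^{-I(h/\varepsilon)}]$ is not bounded below by a constant; it must blow up at exactly the right rate, uniformly over all $h$ with $\|h\|_{\bar{\mathcal H}}\le 2\varepsilon$, and ``Malliavin calculus on the rescaled signature'' does not deliver this: the shift $h$ enters the signature nonlinearly at every tensor level, so the shifted, rescaled object is not a rescaled copy of $S_l(B)$, and your splitting of $I(h/\varepsilon)$ into a component in the ``Gaussian span'' of the approximating variable plus an orthogonal part uses the Gaussianity of the limit $X(h)$, which holds in the elliptic case but fails here since $F_l(\log S_l(\varepsilon B+h),x)$ is a genuine polynomial chaos. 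Relatedly, positivity of the signature density near the identity, which you assert in passing, is itself a nontrivial input (the paper proves it via Sard's theorem and a positivity criterion). Finally, transferring the lower bound from the Taylor approximation to the true density is not a routine remainder estimate: the paper does it by a Fourier-transform comparison requiring upper bounds on $\partial_y^n p_l^\eta(t,x,\cdot)$ with a time-singularity exponent independent of $l$, so that one may take $l$ large enough for $t^{H\bar l}$ to absorb it; your ``control $R^\varepsilon(h)$ in Malliavin--Sobolev norms'' does not address this uniformity-in-$l$ issue, which is precisely where the hypoelliptic case is delicate.
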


The sharpness of Theorem \ref{thm: local lower estimate} can be seen from the fractional Brownian motion case, i.e. when $N=d$ and $V=\mathrm{Id}$. {We also point out that, in terms of uniform local lower estimates, the cone $d(x,y)\leq t^H$ is the regime where the estimate appears to be interesting as it controls the rate of explosion (singularity) of the density as $t\rightarrow 0^+$}. Finally, let us stress again that, the developments of the above main results are intrinsically connected. Indeed, as we will see, the technique we use to prove Theorem \ref{thm: local comparison} will be an essential ingredient for establishing Theorem \ref{thm:equiv-distances}. In addition, Theorem \ref{thm:equiv-distances} and Theorem \ref{prop: positivity-intro} provide two essential ingredients towards the proof of Theorem \ref{thm: local lower estimate}.

\subsection{Strategy and outlook}\label{sec: strategy}

Let us say a few words about the methodology we have used in order to obtain our main results. Although we will describe our overall strategy with more details in Section \ref{sec: local lower bound}, let us  mention here that it
 is based on the reduction of the problem to a finite dimensional one, plus some geometric type arguments.  Let us also highlight the fact that a much simpler strategy can be used for the elliptic case, as explained in our companion paper~\cite{GOT20}.

More specifically, the key point in our proofs is that the solution $X_{t}$ to \eqref{eq: hypoelliptic SDE} can be approximated by a simple enough function of the so-called truncated signature of order $l$ for the fractional Brownian motion $B$. This object is formally defined,  for a given $l\ge 1$, as the following $\oplus_{k=0}^{l}(\R^{d})^{\otimes k}$-valued process:
\begin{equation*}
\Gamma_t
=
1+\sum_{k=1}^{l}\int_{0<t_1<\cdots<t_k<t}dB_{t_1}\otimes \cdots\otimes dB_{t_k},
\end{equation*}
and  it enjoys some convenient algebraic and analytic properties. The truncated signature is the main building block of the rough path theory (see e.g \cite{Lyons98}), and was also used in \cite{KS87} in a Malliavin calculus context. Part of our challenge in the current contribution is to combine the properties of the process $\Gamma$, together with the Cameron-Martin space structure related to the fractional Brownian motion $B$, in order to achieve sharp estimates for the control distance function as well as the density of solution.

As mentioned above, the truncated signature gives rise to a $l$-th order local approximation of $X_{t}$ in a neighborhood of its initial condition $x$. Namely, if $V_{(i_{1},\ldots,i_{k})}$ denotes the composition of $V_{i_{1}}$ up to $V_{i_{k}}$ considered as differential operators
and if we set 
\begin{equation}\label{eq:def-Fl-intro}
F_{l}(\Gamma_{t},x)\triangleq
\sum_{k=1}^{l} \sum_{i_{1},\ldots,i_{k}=1}^{d}
V_{(i_{1},\ldots,i_{k})}(x)
\int_{0<t_1<\cdots<t_k<t}dB_{t_1}^{i_{1}} \cdots dB_{t_k}^{i_{k}},
\end{equation}
then classical rough paths considerations assert that $F_{l}(\Gamma_{t},x)$ is an approximation of order $t^{Hl}$ of $X_{t}$ for small $t$. In the sequel we will heavily rely on some non degeneracy properties of $F_{l}$ derived from the uniform hypoelliptic assumption \eqref{eq:unif-hypo-assumption}, in order to get the following information:

\vspace{2mm}

\noindent (i) One can construct a path $h$ in the Cameron-Martin space of $B$ which joins $x$ and any point $y$ in a small enough neighborhood of $x$. This task is carried out thanks to a complex iteration procedure, whose building block is the non-degeneracy of the function $F_{l}$. It is detailed in Section \ref{sec:proof-thm-12}. In this context, observe that the computation of the Cameron-Martin norm of $h$ also requires a substantial effort. This will be the key step in order to prove Theorem \ref{thm: local comparison} and Theorem \ref{thm:equiv-distances}  concerning the distance $d$ given in Definition \ref{def: CM bridge}.\\
(ii) The proof of the lower bound given in Theorem \ref{thm: local lower estimate} also hinges heavily on the approximation $F_{l}$ given by \eqref{eq:def-Fl-intro}. Indeed, the preliminary results about the density of $\Gamma_{t}$, combined with the non-degeneracy of $F_{l}$, yield good properties for the density of $F_{l}(\Gamma_{t},x)$. One is then left with the task of showing that $F_{l}(\Gamma_{t},x)$ approximates $X_{t}$ properly at the density level.

\vspace{2mm}

In conclusion, although the steps performed in the remainder of the article might look technically and computationally involved, they rely on a natural combination of analytic and geometric bricks as well as a reduction to a finite dimensional problem. Let us also highlight the fact that our next challenge is to iterate the local estimates presented here in order to get Gaussian type lower bounds for the density $p(t,x,y)$ of $X_{t}$. This causes some further complications due to  the complex (non Markovian) dependence structure for the increments of the fractional Brownian motion $B$. We defer this important project to a future publication.
Eventually, we mention that this note is an abridged and self-contained version of our original draft on the same topic. For further computational details, the reader is referred to \cite{GOT19}.

\begin{rem}
As the reader might have observed, our equation~\eqref{eq: hypoelliptic SDE} does not have a drift component. If one wishes to add a drift $V_{0}(X_{t}) \, dt$ to the equation, it first means that the approximation function $F_{l}$ in~\eqref{eq:def-Fl-intro} should include mixed integrals involving both $dt$ and $dB_{t}$ differentials. This could possibly be achieved thanks to tree type notation, similarly to what is done e.g in~\cite{NNRT}. Once the approximation map $F_{l}$ is constructed, some subtle effects coming from the drift term are observed in the hypoelliptic case and should be taken into account in our density estimates. We refer to \cite{BW} for an analysis of the small noise asymptotics for the probability of sets $A$ which are \emph{non-horizontally accessible}, meaning that the drift $V_{0}$ is needed to reach those sets. Our density bounds should go along the same lines in the hypoelliptic case with drift. As in Kusuoka and Stroock \cite{KS87}, we have refrained to incorporate those elaborate developments in the current paper for the sake of conciseness and simplicity. However, they are certainly worth considering for a future contribution.
\end{rem}

\medskip

\noindent
\textbf{Organization of the present paper.} In Section \ref{sec:prelim}, we present some basic notions from the analysis of fractional Brownian motion and rough path theory.  In Section \ref{sec: control distance}, we develop the proofs of Theorem \ref{thm: local comparison} and Theorem \ref{thm:equiv-distances} concerning the control distance function. In Section~\ref{sec: local lower bound}, we develop the proof of Theorem \ref{thm: local lower estimate} concerning the density of solution. Theorem~\ref{prop: positivity-intro} is proved in the first key step towards proving Theorem \ref{thm: local lower estimate} in Section \ref{section: step 1}.

\begin{notation*}
Throughout the rest of this paper, we will use "$\mathrm{Letter}_{\mathrm{subscript}}$" to denote constants whose value depend only on the objects specified in the "subscript" and may differ from line to line. Unless otherwise stated, a constant will implicitly depend on $H,V,l_0$. We will always omit the dependence on dimension. 
\end{notation*}

\section{Preliminary results}\label{sec:prelim}

This section is devoted to some preliminary notions on the stochastic analysis of fractional Brownian motion. We  also recall some basic facts about rough path solutions to noisy differential equations.

\subsection{Stochastic analysis of fractional Brownian motion}\label{sec: prel.}

Let us start by recalling the definition of fractional Brownian motion.

\begin{defn}\label{def:fbm}
A $d$-dimensional \textit{fractional Brownian motion} with Hurst parameter $H\in(0,1)$ is an $\mathbb{R}^d$-valued continuous centered Gaussian process $B_t=(B_t^1,\ldots,B_t^d)$ whose covariance structure is given by 
\beq\label{eq:cov-fBm}
\mathbb{E}[B_{s}^{i}B_{t}^{j}]=\frac{1}{2}\lp s^{2H}+t^{2H}-|s-t|^{2H}\rp \delta_{ij}
\triangleq R(s,t) \delta_{ij}.
\eeq
\end{defn}

This process is defined and analyzed in numerous articles (cf. \cite{DU97,Nualart06,PT00} for instance), to which we refer for further details. We always assume that the fractional Brownian motion $B$ is defined on a complete probability space $(\Omega, \cf, \mathbb{P})$ where $\cf$ is generated by $\{B_{t}: t\in[0,T]\}$.
In this section, we focus on basic stochastic analysis notions which will be used in the sequel.

In order to introduce the Hilbert spaces which will feature in the sequel, we consider a one dimensional fractional Brownian motion $\{B_t:0\leq t\leq 1\}$ with Hurst parameter $H\in(0,1)$ defined on $(\Omega,\cf,\mathbb{P})$.   The discussion here can be easily adapted to the multidimensional setting with arbitrary time horizon $[0,T]$. 
Let ${\cal C}_{1}$ be the associated first order Wiener chaos,
i.e. ${\cal C}_{1}\triangleq\overline{\mathrm{Span}\{B_{t}:0\leq t\leq1\}}\ {\rm in}\ L^{2}(\Omega,\mathbb{P})$.
We will frequently identify a Hilbert space with its dual in the canonical way without  further mentioning. 

\begin{defn}\label{def:bar-H} Let $W$ be the space of continuous paths $w:[0,1]\rightarrow\mathbb{R}^{1}$
with $w_{0}=0.$
Define $\bar{{\cal H}}$ to be the space of elements $h\in W$ that
can be written as 
\beq\label{eq:def-h-in-CM}
h_{t}=\mathbb{E}[B_{t}Z],\ \ \ 0\leq t\leq1,
\eeq
where $Z\in{\cal C}_{1}.$ We equip $\bar{{\cal H}}$ with an inner product structure given by
\[
\langle h,h'\rangle_{\bar{{\cal H}}}\triangleq\mathbb{E}[Z\cdot Z'],\ \ \ h,h'\in\bar{{\cal H}},
\]
whenever $h,h'$ are defined by \eqref{eq:def-h-in-CM} for two random variables $Z,Z'\in{\cal C}_{1}$.
The Hilbert space $(\bar{\mathcal{H}},\langle\cdot,\cdot\rangle_{\bar{\mathcal{H}}})$ is called the \textit{Cameron-Martin
subspace} of the fractional Brownian motion. 
\end{defn}

One of the advantages of working with fractional Brownian motion is that a convenient analytic description of $\bar{\mathcal{H}}$ in terms of fractional calculus is available.  
We refer to the aforementioned references \cite{DU97,Nualart06,PT00} for the definition of fractional derivatives and some further characterizations of $\bch$. We mention one useful fact here: there is an isomorphism $K$ between $L^{2}([0,1])$
and $I_{0+}^{H+1/2}(L^{2}([0,1]))$ defined by
\begin{equation}\label{eq: analytic expression of K}
K\varphi\triangleq\begin{cases}
C_{H}\cdot I_{0^{+}}^{1}\lp t^{H-\frac{1}{2}}\cdot I_{0^{+}}^{H-\frac{1}{2}}\lp s^{\frac{1}{2}-H}\varphi(s)\rp (t)\rp , & H>\frac{1}{2};\\
C_{H}\cdot I_{0^{+}}^{2H}\lp t^{\frac{1}{2}-H}\cdot I_{0^{+}}^{\frac{1}{2}-H}\lp s^{H-\frac{1}{2}}\varphi(s)\rp (t)\rp , & H\leq\frac{1}{2},
\end{cases}
\end{equation}
where $C_{H}$ is a universal constant depending only on $H$. Then the space  $\bar{{\cal H}}$ can be identified with $I_{0^{+}}^{H+1/2}(L^{2}([0,1])),$ and 
the Cameron-Martin norm is given by 
\begin{equation}
\|h\|_{\bar{{\cal H}}}=\|K^{-1}h\|_{L^{2}([0,1])}.\label{eq: inner product in terms of fractional integrals}
\end{equation}

Next we mention a variational embedding theorem for the Cameron-Martin subspace $\bar{\mathcal{H}}$ which will be used in a crucial way. The case when $H>1/2$ is a simple exercise, while the case when $H\leq 1/2$ was treated in \cite{FV06}. From a pathwise viewpoint, this allows us to integrate a fractional Brownian path against a Cameron-Martin path or vice versa (cf. \cite{Young36}), and to make sense of ordinary differential equations driven by a Cameron-Martin path (cf.~\cite{Lyons94}).

\begin{prop}\label{prop: variational embedding}If $H>\frac{1}{2}$, then
$\bar{\mathcal{H}}\subseteq C_{0}^{H}([0,1];\mathbb{R}^{d})$, the space of H-H\"older continuous paths. If $H\leq\frac{1}{2}$,
then for any $q>\lp H+1/2\rp ^{-1}$, we have 
$\bar{\mathcal{H}}\subseteq C_{0}^{q\text{-}\mathrm{var}}([0,1];\mathbb{R}^{d})$, the space of continuous paths with finite $q$-variation. In addition, the above inclusions are continuous embeddings.
\end{prop}

For the sake of conciseness, we only recall a few notation on Malliavin calculus which feature prominently in our future considerations. The reader is referred to~\cite{Nualart06} for a thorough introduction to Malliavin calculus techniques. 
We use $\mathbf{D}_h$ to denote the Malliavin derivative along a direction $h\in\bar{{\cal H}}$. Respectively, higher order derivatives are denoted by $\mathbf{D}^k_{h_1,\ldots,h_k}$. The corresponding Sobolev spaces are written as $\mathbb{D}^{k,p}(\bar{\ch})$, and we also set $\mathbb{D}^{\infty}(\bar{\ch})=\cap_{p \geq 1} \cap_{k\geq 1} \mathbb{D}^{k,p}(\bar{\ch})$.

\begin{defn}\label{non-deg}
Let $F=(F^1,\ldots , F^n)$ be a random vector whose components are in $\mathbb{D}^\infty(\bar{\ch})$. Define the \textit{Malliavin covariance matrix} of $F$ by
\begin{equation} \label{malmat}
\gamma_F=(\langle \mathbf{D}F^i, \mathbf{D}F^j\rangle_{\bar{\ch}})_{1\leq i,j\leq n}.
\end{equation}
Then $F$ is said to be  {\it non-degenerate} if $\gamma_F$ is invertible $a.s.$ and
$$(\det \gamma_F)^{-1}\in \cap_{p\geq1}L^p(\Omega).$$
\end{defn}
It is a fundamental result in Malliavin calculus that the law of a non-degenerate random vector $F$ admits a smooth density with respect to the Lebesgue measure on $\mathbb{R}^n$ (cf. \cite[Theorem 2.1.4]{Nualart06}).

\subsection{Free nilpotent groups}\label{sec:free-nilpotent}

Next, we introduce some basic algebraic structure that plays an essential role in the rough path analysis of  equation \eqref{eq: hypoelliptic SDE}. The reader is referred to \cite{FV10} for a more systematic presentation. We start with the following notation. 
\begin{notation}\label{not:free-algebra}
The truncated tensor algebra of order $l$ over  $\R^{d}$ is denoted by $T^{(l)}$. Under addition and tensor product, $(T^{(l)},+,\otimes)$ is an associative algebra. The set of homogeneous Lie polynomials of degree $k$ is denoted as $\cl_{k}$, and the free nilpotent Lie algebra of order $l$ is denoted as $\mathfrak{g}^{(l)}$. The exponential map on $T^{(l)}$ is defined by
\beq\label{eq:def-exp-on-T-l}
\exp({a})\triangleq\sum_{k=0}^{\infty}\frac{1}{k!} \, {a}^{\otimes k}\in T^{(l)},
\eeq
where the sum is indeed locally finite and hence well-defined. $T^{(l)}$ is equipped with a natural inner product induced by the Euclidean structure on $\mathbb{R}^d$ and the Hilbert-Schmidt tensor norm on each of the tensor products. To be more precise, the inner product on $(\mathbb{R}^d)^{\otimes k}$ is induced by 
\[
\langle v_{1}\otimes\cdots\otimes v_{k},w_{1}\otimes\cdots\otimes w_{k}\rangle_{(\mathbb{R}^{d})^{\otimes k}}\triangleq\langle v_{1},w_{1}\rangle_{\mathbb{R}^{d}}\cdots\langle v_{k},w_{k}\rangle_{\mathbb{R}^{d}}.
\]The components $(\mathbb{R}^d)^{\otimes k}$ and $(\mathbb{R}^d)^{\otimes m}$ ($k\neq m$) are assumed to be orthogonal.
\end{notation}

The following algebraic structure is critical in rough path theory.
\begin{defn}\label{def:free-group}
The \textit{free nilpotent Lie group} $G^{(l)}$ of order $l$ is defined by 
$$G^{(l)}\triangleq\exp(\mathfrak{g}^{(l)})\subseteq T^{(l)}.
$$ 
The exponential function is a diffeomorphism under which $\mathfrak{g}^{(l)}$ in Notation~\ref{not:free-algebra} is the Lie algebra of $G^{(l)}$.
\end{defn}

It will be useful in the sequel to have some basis available for the algebras introduced above. Recall that $\mathcal{A}(l)$ (respectively, $\mathcal{A}_1(l)$) denotes the set of words (respectively, non-empty words) over $\{1,\cdots,d\}$ of length at most $l$. We set $\mathrm{e}_{(\emptyset)}\triangleq1$, and for each word $\alpha=(i_{1},\ldots,i_{r})\in\ca_{1}(l)$, we set
\beq\label{eq:def-basis-algebras}
{\rm e}_{(\alpha)}\triangleq{\rm e}_{i_{1}}\otimes\cdots\otimes{\rm e}_{i_{r}},
\quad\text{and}\quad
{\rm e}_{[\alpha]}\triangleq[{\rm e}_{i_{1}},\cdots[{\rm e}_{i_{r-2}},[{\rm e}_{i_{r-1}},{\rm e}_{i_{r}}]]],
\eeq
where $\{{\rm e}_{1},\ldots,{\rm e}_{d}\}$ denotes the canonical basis of $\mathbb{R}^{d}
$. 
Then $\{{\rm e}_{(\alpha)}:\alpha\in{\cal A}(l)\}$ forms an orthonormal
basis of $T^{(l)}$ under the Hilbert-Schmidt tensor norm. In addition, we also have $\mathfrak{g}^{(l)}={\rm Span}\{{\rm e}_{[\alpha]}:\alpha\in{\cal A}_{1}(l)\}$.

As a closed subspace, $\mathfrak{g}^{(l)}$ inherits a canonical Hilbert structure from $T^{(l)}$ which makes it into a flat Riemannian manifold. The associated volume measure $du$ (the Lebesgue measure) on $\mathfrak{g}^{(l)}$ is left invariant with respect to the product induced from the group structure on $G^{(l)}$ through the exponential diffeomorphism. In addition, for each $\lambda>0$, there is a dilation operation $\delta_\lambda: T^{(l)}\rightarrow T^{(l)}$ induced by $\delta_\lambda(a)\triangleq \lambda^k a$ if $a\in(\mathbb{R}^d)^{\otimes k}$, which satisfies the relation 
$\delta_\lambda\circ\exp=\exp\circ\,\delta_\lambda$ when restricted on $\mathfrak{g}^{(l)}$. Thanks to the fact that $\delta_\lambda(a)= \lambda^k a$ for any $a\in(\R^{d})^{\otimes k}$ and recalling that $\mathcal{L}_k$ is introduced in Notation~\ref{not:free-algebra},
one can easily show that 
\begin{equation}\label{eq:dilation-lebesgue-on-cal-G}
du\circ\delta_\lambda^{-1}=\lambda^{-\nu}du,
\quad\text{where}\quad
\nu\triangleq\sum_{k=1}^{l}k\dim (\mathcal{L}_k).
\end{equation}

We always fix the Euclidean norm on $\mathbb{R}^d$ in the remainder of the paper. As far as the free nilpotent group $G^{(l)}$ is concerned, there are several useful metric structures. Among them we will use an extrinsic metric $\rho_{\textsc{HS}}$ which can be defined easily due to the fact that $G^{(l)}$ is a subspace of $T^{(l)}$. Namely for $g_{1},g_{2}\in G^{(l)}$ we set:
\beq\label{eq:def-HS-distance}
\rho_{\textsc{HS}}(g_{1},g_{2})\triangleq\|g_{2}-g_{1}\|_{\textsc{HS}},\ \ \ g_{1},g_{2}\in G^{(l)},
\eeq
where the right hand side is induced from the Hilbert-Schmidt norm on $T^{(l)}$.

\subsection{Path signatures and the fractional Brownian rough path}\label{sec:signature}

The stochastic differential equation \eqref{eq: hypoelliptic SDE} driven by a fractional Brownian motion $B$ is standardly solved in the rough paths sense. In this section we recall some basic facts about this notion of solution. We will also give some elements of rough paths expansions, which are at the heart of our methodology in the present paper. 

The link between free nilpotent groups and noisy equations like \eqref{eq: hypoelliptic SDE} is made through the notion of signature. Recall that a continuous map
$\mathbf{x}:\{(s,t)\in[0,1]^{2}: \, s\le t\}\rightarrow T^{(l)}$ is called a
\textit{multiplicative functional} if for $s<u<t$ one has $\mathbf{x}_{s,t}
=\mathbf{x}_{s,u}\otimes\mathbf{x}_{u,t}$. A particular occurrence of this kind of map is given when one considers a path $w$ with finite
variation and sets for $s\le t$,
\begin{equation}\label{eq:def-iterated-intg}
\mathbf{w}_{s,t}^{n}=
 \int_{s<u_{1}<\cdots<u_{n}<t}dw_{u_{1}}\otimes\cdots\otimes dw_{u_{n}} .
\end{equation}
Then the so-called \textit{truncated signature path} of order $l$ associated with $w$ is the functional $S_{l}(w)_{\cdot,\cdot}: \{(s,t)\in[0,1]^{2}: s\le t\} \rightarrow T^{(l)}$ defined by
\begin{equation}\label{eq:signature-smooth-x}
S_{l}(w)_{s,t}:=1+\sum_{n=1}^{l}\mathbf{w}_{s,t}^{n}.
\end{equation}
It can be shown that the functional $S_{l}(w)_{\cdot,\cdot}$ is multiplicative and takes values in the free nilpotent group $G^{(l)}$. The \textit{truncated signature} of order $l$ for $w$ is the tensor element $S_l(w)_{0,1}\in G^{(l)}$. It is simply denoted as $S_l(w)$.

A rough path can be seen as a generalization of the truncated signature path \eqref{eq:signature-smooth-x} to the non-smooth situation. Specifically, the definition of H\"older rough paths can be summarized as follows.
 \begin{defn}\label{def:RP}
Let $\ga\in(0,1)$. The space of \textit{weakly geometric $\ga$-H\"older rough paths} 
is the set of multiplicative functionals $\mathbf{x}:\{(s,t)\in[0,1]^{2}: s\le t\}\rightarrow G^{[1/\ga]}$ such that 
\begin{equation}
\|\mathbf{x}\|_{\ga;\textsc{HS}}
\triangleq
\sum _{i=1}^{[1/\ga]} \sup_{0\le s < t \le 1} \frac{\|\mathbf{x}^i_{s,t}\|_{\textsc{HS}}}{|t-s|^{i\ga}} < \infty.
\end{equation}
\end{defn}
An important subclass of weakly geometric $\ga$-H\"older rough paths is the set of \textit{geometric} $\ga$-H\"older rough paths. These are multiplicative paths $\mathbf{x}$ with values in $G^{\lfloor
1/\ga\rfloor}$ such that $\|\mathbf{x}\|_{\ga;\textsc{HS}}$ is finite and such that there exists a sequence 
$\{  x_{\ep}: \ep>0 \} $ with $x_{\ep}\in C^{\infty}([0,1];\mathbb{R}^{d})$ satisfying 
\begin{equation}\label{eq:closure-smooth-path}
\lim_{\ep\to 0} 
\|\mathbf{x} - S_{[1/\ga]}(x_{\ep})\|_{\ga;\textsc{HS}}
 =0.
\end{equation}


The notion of signature allows us to define a more intrinsic notion of distance on the free group $G^{(l)}$. This metric, known as the \textit{Carnot-Caratheodory metric}, is defined  by 
\[
\rho_{\textsc{CC}}(g_{1},g_{2})\triangleq\|g_{1}^{-1}\otimes g_{2}\|_{\textsc{CC}},\ \ \ g_{1},g_{2}\in G^{(l)},
\]
where the CC-norm $\|\cdot\|_{\textsc{CC}}$ is defined by  
\beq\label{eq:def-norm-CC}
\|g\|_{\textsc{CC}}\triangleq\inf\lcl  \|w\|_{1-{\rm var}}:w\in C^{1-{\rm var}}([0,1];\mathbb{R}^{d})\ {\rm and}\ S_{l}(w)=g\rcl . 
\eeq
It can be shown that the infimum in \eqref{eq:def-norm-CC} is attainable. 

\begin{rem}\label{connecting by smooth path}
It is well-known that for any $g\in G^{(l)}$, one can find a piecewise linear path $w$ such that $S_l(w)=g$ (cf. \cite{FV10} for instance). In addition, a simple reparametrization allows one to take $w$ to be smooth with derivative compactly supported in $(0,1)$. 
\end{rem}

The HS and CC metrics are equivalent as seen from the following so-called \textit{ball-box estimate} (cf. \cite[Proposition 7.49]{FV10}).
\begin{prop}\label{prop: ball-box estimate}
Let $\rho_{\textsc{HS}}$ and $\rho_{\textsc{CC}}$ be the distances on $G^{(l)}$ respectively defined by~\eqref{eq:def-HS-distance} and \eqref{eq:def-norm-CC}.
For each $l\geq1$, there exists a constant $C=C_{l}>0$, such that 
\begin{equation}\label{eq:bnd-CC-to-HS}
\rho_{\textsc{CC}}(g_{1},g_{2})\leq C\max\lcl  \rho_{\textsc{HS}}(g_{1},g_{2}),\rho_{\textsc{HS}}(g_{1},g_{2})^{\frac{1}{l}}\cdot\max\{ 1,\|g_{1}\|_{\textsc{CC}}^{1-\frac{1}{l}}\} \rcl  
\end{equation}
and
\[
\rho_{\textsc{HS}}(g_{1},g_{2})\leq C\max\lcl  \rho_{\textsc{CC}}(g_{1},g_{2})^{l},\rho_{\textsc{CC}}(g_{1},g_{2})\cdot\max\{ 1,\|g_{1}\|_{\textsc{CC}}^{l-1}\} \rcl  
\]
for all $g_{1},g_{2}\in G^{(l)}$. In particular, 
\[
\|g\|_{\textsc{CC}}\leq1\implies\|g-{\bf 1}\|_{\textsc{HS}}\leq C\|g\|_{\textsc{CC}}
\]
and
\[
\|g-{\bf 1}\|_{\textsc{HS}}\leq1\implies\|g\|_{\textsc{CC}}\leq C\|g-{\bf 1}\|_{\textsc{HS}}^{\frac{1}{l}}.
\]
\end{prop}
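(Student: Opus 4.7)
The plan is to prove two \emph{identity-based} comparisons first, and then to promote them to the two-point statements using left-invariance of $\rho_{\textsc{CC}}$. The two identity estimates I am aiming for are: if $\|g\|_{\textsc{CC}}\le 1$ then $\|g-\mathbf{1}\|_{\textsc{HS}}\le C\|g\|_{\textsc{CC}}$, and if $\|g-\mathbf{1}\|_{\textsc{HS}}\le 1$ then $\|g\|_{\textsc{CC}}\le C\|g-\mathbf{1}\|_{\textsc{HS}}^{1/l}$. These are essentially the two special cases stated at the end of the proposition, and the main body of \eqref{eq:bnd-CC-to-HS} will be obtained from them by a simple bookkeeping.

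For the direct identity estimate I would pick a bounded variation path $w$ nearly attaining the infimum in \eqref{eq:def-norm-CC}, so that $S_l(w)=g$ with $\|w\|_{1-\mathrm{var}}\le 2\|g\|_{\textsc{CC}}$, and then use the classical iterated-integral bound $\|\mathbf{w}^k_{0,1}\|_{\textsc{HS}}\le \|w\|_{1-\mathrm{var}}^k/k!$ (immediate from \eqref{eq:def-iterated-intg}) summed over $k\le l$. For the converse, I would combine the homogeneity $\|\delta_\lambda g\|_{\textsc{CC}}=\lambda\|g\|_{\textsc{CC}}$ with compactness: the set $\{g\in G^{(l)}:\|g-\mathbf{1}\|_{\textsc{HS}}\le 1\}$ is compact, and by Chow--Rashevskii $\|\cdot\|_{\textsc{CC}}$ is finite and continuous on $G^{(l)}$, so it is bounded on that set by some $C_0$. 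For a general $g$ with $\epsilon:=\|g-\mathbf{1}\|_{\textsc{HS}}\le 1$, the rescaled element $\delta_{\epsilon^{-1/l}}g$ has $k$-th component of norm at most $\epsilon^{-k/l}\|g^k\|\le \epsilon^{\,1-k/l}\le 1$ for $k\le l$, hence lies in that compact set; undoing the dilation gives $\|g\|_{\textsc{CC}}\le C_0\epsilon^{1/l}$.

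To pass to two points, I use $\rho_{\textsc{CC}}(g_1,g_2)=\|g_1^{-1}\otimes g_2\|_{\textsc{CC}}$ and the key identity $g_1^{-1}\otimes g_2-\mathbf{1}=g_1^{-1}\otimes(g_2-g_1)$, whose zero-degree component vanishes. Expanding the tensor product componentwise yields
\[
\|(g_1^{-1}\otimes g_2-\mathbf{1})^n\|_{\textsc{HS}}\le \sum_{k=1}^n\|(g_1^{-1})^{n-k}\|_{\textsc{HS}}\,\|(g_2-g_1)^k\|_{\textsc{HS}},
\]
and the direct identity estimate applied to $g_1^{-1}$ (using $\|g_1^{-1}\|_{\textsc{CC}}=\|g_1\|_{\textsc{CC}}$ together with the dilation scaling of $\|\cdot\|_{\textsc{HS}}$) gives $\|(g_1^{-1})^j\|_{\textsc{HS}}\le C\max\{1,\|g_1\|_{\textsc{CC}}\}^j$. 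Summing produces $\|g_1^{-1}\otimes g_2-\mathbf{1}\|_{\textsc{HS}}\le C\max\{1,\|g_1\|_{\textsc{CC}}\}^{l-1}\rho_{\textsc{HS}}(g_1,g_2)$. Feeding this into the converse identity estimate yields the first inequality in \eqref{eq:bnd-CC-to-HS}, with the exponent $1-1/l$ on $\|g_1\|_{\textsc{CC}}$ arising by raising $\max\{1,\|g_1\|_{\textsc{CC}}\}^{l-1}$ to the power $1/l$. The second inequality of \eqref{eq:bnd-CC-to-HS} is proved symmetrically: factor $g_2-g_1=g_1\otimes(g_1^{-1}\otimes g_2-\mathbf{1})$, apply the analogous componentwise bound, and then insert the direct identity estimate applied to $g_1^{-1}\otimes g_2$ (which yields the two regimes $\rho_{\textsc{CC}}\le 1$ versus $\rho_{\textsc{CC}}\ge 1$ entering the outer maximum).

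The main obstacle I anticipate is bookkeeping of exponents across the two size regimes (small versus large distance), so that the four competing terms combine into the sharp maxima of \eqref{eq:bnd-CC-to-HS} with the correct powers $1$ and $1/l$ (respectively $1$ and $l$ for the reverse inequality); a careless plugging-in easily loses the optimal $1/l$ exponent. A secondary point to handle carefully is the compactness step in the converse identity estimate — specifically that the HS-unit ball inside $T^{(l)}$ intersects $G^{(l)}$ in a compact set and that Chow--Rashevskii does apply, which it does since $\{\mathrm{e}_1,\ldots,\mathrm{e}_d\}$ Lie-generates $\mathfrak{g}^{(l)}$ tautologically in the free nilpotent setting.
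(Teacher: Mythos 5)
First, note that the paper itself does not prove this proposition: it is quoted from Friz--Victoir \cite[Proposition 7.49]{FV10}, so your argument has to stand entirely on its own. Your two identity-level estimates are essentially correct: the iterated-integral bound $\|\mathbf{w}^k_{0,1}\|_{\textsc{HS}}\le\|w\|_{1\text{-var}}^k/k!$ gives $\|g-\mathbf{1}\|_{\textsc{HS}}\le C\|g\|_{\textsc{CC}}$ for $\|g\|_{\textsc{CC}}\le1$, and the dilation-plus-compactness argument gives the global bound $\|g\|_{\textsc{CC}}\le C\max\{\|g-\mathbf{1}\|_{\textsc{HS}},\|g-\mathbf{1}\|_{\textsc{HS}}^{1/l}\}$ (modulo two harmless repairs: your rescaled element lies in the HS-ball of radius $\sqrt{l}$ rather than $1$, and you need local boundedness, not just finiteness, of $\|\cdot\|_{\textsc{CC}}$ on $G^{(l)}$). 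The factorization $g_1^{-1}\otimes g_2-\mathbf{1}=g_1^{-1}\otimes(g_2-g_1)$ and the bound $A:=\|g_1^{-1}\otimes g_2-\mathbf{1}\|_{\textsc{HS}}\le C M^{l-1}\rho_{\textsc{HS}}(g_1,g_2)$ with $M:=\max\{1,\|g_1\|_{\textsc{CC}}\}$ are also fine (though the componentwise bound $\|(g_1^{-1})^j\|_{\textsc{HS}}\le M^j/j!$ comes from the iterated-integral estimate for the time-reversed optimal path, not from your ``direct identity estimate''). The genuine gap is the passage to the first inequality of \eqref{eq:bnd-CC-to-HS}: taking the $1/l$-th root is only legitimate on the branch $A\le1$, and on the branch $A>1$ your chain yields only $\rho_{\textsc{CC}}\le CA\le CM^{l-1}\rho_{\textsc{HS}}$, which is \emph{not} dominated by $C\max\{\rho_{\textsc{HS}},\rho_{\textsc{HS}}^{1/l}M^{1-1/l}\}$; test $\rho_{\textsc{HS}}=1$, $M\to\infty$, where the claimed bound is of order $M^{1-1/l}$ while yours is of order $M^{l-1}$. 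This is not exponent bookkeeping: the two facts ``$A\le CM^{l-1}\rho_{\textsc{HS}}$'' and the identity estimate in terms of $A$ are jointly too weak in that regime.

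The missing ingredient is a normalization by dilation before applying your factorization. Since $\delta_\lambda$ is a group automorphism with $\|\delta_\lambda h\|_{\textsc{CC}}=\lambda\|h\|_{\textsc{CC}}$, and since all components of $g_2-g_1$ have degree at least one, one has $\|\delta_{1/M}(g_2-g_1)\|_{\textsc{HS}}\le M^{-1}\rho_{\textsc{HS}}(g_1,g_2)$ for $M\ge1$. Replacing $(g_1,g_2)$ by $(\delta_{1/M}g_1,\delta_{1/M}g_2)$, the base point now has CC-norm at most $1$, so your factorization gives $\tilde A\le C\|\delta_{1/M}(g_2-g_1)\|_{\textsc{HS}}$ with a constant independent of $g_1$; applying the global identity estimate and multiplying back by $M$ yields $\rho_{\textsc{CC}}(g_1,g_2)\le CM\max\{M^{-1}\rho_{\textsc{HS}},(M^{-1}\rho_{\textsc{HS}})^{1/l}\}=C\max\{\rho_{\textsc{HS}},\rho_{\textsc{HS}}^{1/l}M^{1-1/l}\}$, i.e.\ exactly both branches of the stated maximum. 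A related repair is needed in your ``symmetric'' proof of the reverse inequality: splitting on $\rho_{\textsc{CC}}\le1$ versus $\rho_{\textsc{CC}}\ge1$ after separating the two factors leaves a spurious $M^{l-1}$ on the $\rho_{\textsc{CC}}^l$ branch (take $\rho_{\textsc{CC}}=M$ to see the loss); either keep the products together and bound each term $M^{n-k}\rho_{\textsc{CC}}^{k}$, $1\le k\le n\le l$, by $M^{l-1}\rho_{\textsc{CC}}$ when $\rho_{\textsc{CC}}\le M$ and by $\rho_{\textsc{CC}}^{l}$ when $\rho_{\textsc{CC}}\ge M$, or again normalize by $\delta_{1/M}$ first.
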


One of the main application of rough path theory is to extend most stochastic calculus tools to a large class of Gaussian processes. The following result, borrowed from \cite{CQ,FV06}, establishes this link for fractional Brownian motion. 

\begin{prop}\label{prop:fbm-rough-path}
Let $B$ be a fractional Brownian motion with Hurst parameter  $H>1/4$. Then $B$ admits a lift $\mathbf{B}$ as a geometric rough path of order $[1/\ga]$ for any $\ga<H$.
\end{prop}

Let us now turn to the definition of rough differential equations. There are several equivalent ways to introduce this notion, among which we will choose to work with Taylor type expansions, since they are more consistent with our later developments. To this aim, let us first consider a bounded variation path $w$ and the following ordinary differential equation driven by $w$:
\begin{equation}\label{eq: ode}
dx_t=\sum_{\alpha=1}^d V_\alpha(x_t) \, dw_t^\alpha,
\end{equation}
where the $V_\alpha$'s are $C_b^\infty$ vector fields.
For any given word $\alpha=(i_1,\ldots,i_r)$ over the letters $\{1,\ldots,d\}$,  define the vector field $V_{(\alpha)}\triangleq(V_{i_{1}}\cdots(V_{i_{r-2}}(V_{i_{r-1}}V_{i_{r}})))$, 
where we have identified a vector field with a differential operator, so that $V_iV_j$ means differentiating $V_j$ along direction $V_i$. 
Classically, a \textit{formal Taylor expansion} of the solution $x_t$ to \eqref{eq: ode} is  given by
\begin{equation}\label{eq: formal Taylor expansion}
x_{s,t}\sim\sum_{k=1}^{\infty}\sum_{i_{1},\ldots,i_{k}=1}^{d}V_{(i_{1},\ldots,i_{k})}(x_{s})
\int_{s<u_{1}<\cdots<u_{k}<t}dw_{u_{1}}^{i_{1}}\cdots dw_{u_{k}}^{i_{k}},
\end{equation}
where we have set $x_{s,t}=x_{t}-x_{s}$.  
This expansion can be rephrased in more geometrical terms. Specifically, we define the following Taylor approximation function on $\mathfrak{g}^{(l)}$. Recall that the sets of words $\ca(l),\ca_{1}(l)$ are introduced at the beginning of Section \ref{sec: main results}.

\begin{defn}\label{def: Taylor approximation function}
For each $l\geq1$, we define the \textit{Taylor approximation function} $F_l: \mathfrak{g}^{(l)}\times\mathbb{R}^N\rightarrow\mathbb{R}^N$ of order $l$ associated with the ODE (\ref{eq: ode}) by
\[
F_{l}(u,x)\triangleq\sum_{\alpha\in{\cal A}_{1}(l)}V_{(\alpha)}(x)\cdot (\exp u)^\alpha,\ \ \ (u,x)\in\mathfrak{g}^{(l)}\times\mathbb{R}^{N},
\]
where the exponential function is defined on $T^{(l)}$ by \eqref{eq:def-exp-on-T-l} and $(\exp(u))^\alpha$ is the coefficient of $\exp(u)$ with respect to the tensor basis element $\rm e_{(\alpha)}$ introduced in \eqref{eq:def-basis-algebras}. We also say that $u\in\mathfrak{g}^{(l)}$ \textit{joins} $x$ \textit{to} $y$ \textit{in the sense of Taylor approximation} if $y=x+F_l(u,x)$.
\end{defn}

With Definition \ref{def: Taylor approximation function} in hand, we can recast the formal expansion \eqref{eq: formal Taylor expansion} (truncated at an arbitrary degree $l$) in the following way:
\begin{equation}\label{eq: formal Taylor expansion 2}
x_{s,t}\sim F_{l}\lp \log\lp S_l(w)_{s,t} \rp,  x_{s}  \rp ,
\end{equation}
where the function $\log$ is the inverse of the exponential map for $G^{(l)}$, and $S_l(w)_{s,t}$ is the truncated signature path of $w$ defined by \eqref{eq:signature-smooth-x}. In order to define rough differential equations, a natural idea is to extend this approximation scheme to rough paths. We get a definition which is stated below in the fractional Brownian motion case.

\begin{defn}\label{def:solution-rde}
Let $B$ be a fractional Brownian motion with Hurst parameter  $H>1/4$, and consider its rough path lift $\mathbf{B}$ as in Proposition \ref{prop:fbm-rough-path}. Let $\{V_\alpha:1\le \al\le d\}$ be a family of $C_b^\infty$ vector fields on $\R^{N}$. We say that $X$ is a \textit{solution to the rough differential equation}~\eqref{eq: hypoelliptic SDE} if for all $(s,t)\in[0,1]^{2}$ such that $s<t$ we have
\begin{equation}\label{eq:def-rough-eq}
X_{s,t}
=
F_{[1/\gamma]-1}\lp \log\lp S_{[1/\gamma]-1}(\mathbf{B})_{s,t} \rp,  X_{s}  \rp + R_{s,t} \, ,
\end{equation}
where $R_{s,t}$ is an $\R^{N}$-valued remainder such that there exists $\ep>0$ satisfying
\begin{equation*}
\sup_{0\le s < t \le 1} \frac{|R_{s,t}|}{|t-s|^{1+\ep}} < \infty.
\end{equation*}
\end{defn}

Roughly speaking, Definition \ref{def:solution-rde} says that the expansion of the solution $X$ to a rough differential equation should coincide with \eqref{eq: formal Taylor expansion} up to a remainder with H\"older regularity greater than 1. This approach goes back to Davie \cite{Da07}, and it can be shown to coincide with more classical notions of solutions. The following existence and uniqueness result is fundamental in rough path theory.
\begin{prop}
Under the same conditions as in Definition \ref{def:solution-rde}, there exists a unique solution to equation \eqref{eq: hypoelliptic SDE} understood in the sense of~\eqref{eq:def-rough-eq}.
\end{prop}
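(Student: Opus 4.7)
The plan is to establish existence and uniqueness via the standard Davie-type fixed point argument adapted to the Taylor expansion formulation in Definition~\ref{def:solution-rde}. I fix $\gamma<H$ with $\gamma \notin \mathbb{Q}^{-1}$ and set $l=[1/\gamma]-1$, so that $(l+1)\gamma>1$. I would first introduce the natural space of candidate solutions, namely $\gamma$-Hölder paths $X:[0,1]\to\mathbb{R}^N$ such that the local defect
\[
R_{s,t}\triangleq X_{s,t}-F_{l}\bigl(\log S_{l}(\mathbf{B})_{s,t},X_{s}\bigr)
\]
satisfies $|R_{s,t}|\lesssim |t-s|^{1+\varepsilon}$ for some $\varepsilon>0$. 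The goal is to identify the unique path in this space starting from $x$.

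For existence, I would run a Picard iteration: given $X^{(n)}$, define $X^{(n+1)}$ on each subinterval of a partition $\pi$ of $[0,1]$ by declaring its increment equal to $F_{l}(\log S_{l}(\mathbf{B})_{s,t},X^{(n)}_{s})$, then sew these local approximations together using the multiplicativity of $S_{l}(\mathbf{B})$ and the smoothness of the vector fields $V_{(\alpha)}$. The key analytic tool here is the sewing/Davie lemma: if a two-parameter germ $\mu_{s,t}$ satisfies $|\mu_{s,u}-\mu_{s,t}-\mu_{t,u}|\lesssim |u-s|^{\theta}$ with $\theta>1$, then there exists a unique path whose increments are approximated by $\mu$ up to order $\theta$. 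Applying this to the germ $\mu_{s,t}=F_{l}(\log S_{l}(\mathbf{B})_{s,t},y)$ for a fixed base point $y$ and using Chen's relation $S_{l}(\mathbf{B})_{s,t}=S_{l}(\mathbf{B})_{s,u}\otimes S_{l}(\mathbf{B})_{u,t}$ together with the Taylor expansion of $V_{(\alpha)}$, one checks that the additive defect has regularity $(l+1)\gamma>1$. Iterating and using a contraction argument on a short enough interval yields a local solution, which then extends to $[0,1]$ because the relevant a priori bounds depend only on $\|\mathbf{B}\|_{\gamma;\textsc{HS}}$ and $\|V\|_{C_{b}^{\infty}}$.

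For uniqueness, given two solutions $X,\tilde X$ starting at $x$, I would form the difference $Z_{t}=X_{t}-\tilde X_{t}$, subtract the two Davie expansions~\eqref{eq:def-rough-eq}, and use the smoothness of the $V_{(\alpha)}$ to bound
\[
|Z_{t}|\lesssim \int \bigl(\text{Hölder norm of }Z\bigr)\,d\bigl(\text{control of }\mathbf{B}\bigr)+|t|^{1+\varepsilon},
\]
then close the estimate by a rough Gronwall argument on successive small intervals. The main technical obstacle is precisely the verification of the multiplicative/Chen-type compatibility of the germ $F_{l}(\log S_{l}(\mathbf{B})_{s,t},X_{s})$: one must combine the Baker--Campbell--Hausdorff identity on $G^{(l)}$ with a careful Taylor expansion of the composite differential operators $V_{(\alpha)}$ acting on $F_{l}(\cdot,x)$ in the $x$ variable, and show that all cross terms either reconstruct the next order of the expansion or contribute to a remainder of order $(l+1)\gamma$. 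Since this algebraic/analytic verification is entirely classical (it is the heart of Lyons' and Davie's theorems, and is carried out in detail in \cite{FV10}), the result follows; we therefore refer to these sources rather than reproducing the calculation here.
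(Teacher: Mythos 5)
The paper never actually proves this proposition: it is quoted as a fundamental fact of rough path theory, with the expansion-based notion of solution credited to Davie \cite{Da07} and the verification deferred to the literature \cite{FV10,Lyons98}. Your sketch is exactly that classical argument -- apply the sewing/Davie lemma to the germ $F_{l}(\log S_{l}(\mathbf{B})_{s,t},X_{s})$, check its additive defect via Chen's relation and a Taylor expansion of the $V_{(\alpha)}$, run a Picard/contraction scheme for existence and a rough Gronwall estimate for uniqueness -- so in approach you are doing what the paper implicitly does, namely appealing to the standard construction rather than reproving it.

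There is, however, one concrete slip that your argument relies on. With $l=[1/\gamma]-1$ you assert $(l+1)\gamma>1$, but $(l+1)\gamma=[1/\gamma]\,\gamma\le 1$, and strictly less than $1$ whenever $1/\gamma\notin\mathbb{N}$. The sewing lemma needs the defect exponent of the germ to exceed $1$, and the defect of the order-$l$ Euler germ is of order $(l+1)\gamma$; hence the expansion must be taken to order $l\ge[1/\gamma]$ (for instance, for $1/3<\gamma<1/2$ the first-order germ has defect only $2\gamma<1$, and the level-two expansion is required -- this is the convention in \cite{Da07} and \cite{FV10}). So either you work with $l=[1/\gamma]$, in which case the argument closes, or you keep $l=[1/\gamma]-1$ as in Definition \ref{def:solution-rde}, in which case the inequality you invoke is false and neither the sewing step nor the uniqueness estimate follows as written. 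This off-by-one is arguably inherited from the way the paper's definition is phrased, but since your proof explicitly hinges on the inequality $(l+1)\gamma>1$, it is a genuine gap in the write-up as it stands and should be fixed by adjusting the truncation level (or by arguing separately why the lower-order expansion still characterizes the solution).
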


\section{Local estimate for the control distance function}\label{sec: control distance} 

In this section, we develop the proofs of Theorem \ref{thm: local comparison} and Theorem \ref{thm:equiv-distances}. In contrast to the elliptic case that is treated in \cite{GOT20}, a major difficulty in the hypoelliptic case is that one cannot explicitly construct a Cameron-Martin path joining two points in the sense of Definition \ref{def: CM bridge} in any easy way. The analysis of Cameron-Martin norms also becomes more involved. We detail the steps in what follows, starting with some preliminary lemmas.

\subsection{Preliminary results}

As we mentioned above, it is  difficult to explicitly construct a Cameron-Martin path joining $x$ to $y$ in the sense of differential equation in the hypoelliptic case. However, it is possible to find some $u\in\mathfrak{g}^{(l)}$ joining $x$ to $y$ \textit{in the sense of Taylor approximation}, i.e. $y=x+F_l(u,x)$ where the function $F_l(u,x)$ is introduced in Definition~\ref{def: Taylor approximation function}. This is the content of the following lemma proved in \cite[Lemma 3.15]{KS87}. Recall that $l_0$ is the hypoellipticity constant in the assumption \eqref{eq:unif-hypo-assumption}.
 
 \begin{lem}\label{lem: the Psi function}
 For each $l\geq l_{0}$, there exist constants $r,A>0$ depending only
on $l$ and the vector fields, and a $C_{b}^{\infty}$-function 
\[
\Psi_l:\lcl  u\in\mathfrak{g}^{(l)}:\|u\|_{\textsc{HS}}<r\rcl  \times\mathbb{R}^{N}\times\lcl  \eta\in\mathbb{R}^{N}:|\eta|<r\rcl  \rightarrow\mathfrak{g}^{(l)},
\]
such that for all $u,x,\eta$ in the domain of $\Psi_l,$ we have:

\vspace{2mm} 

\noindent (i) $\Psi_l(u,x,0)=u$;\\
\noindent (ii) $\|\Psi_l(u,x,\eta)-u\|_\textsc{HS}\leq A|\eta|;$\\
\noindent (iii) $F_{l}(\Psi_l(u,x,\eta),x)=F_{l}(u,x)+\eta.$
 \end{lem}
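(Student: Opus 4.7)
The plan is to apply a quantitative implicit function theorem to solve the equation $F_l(v,x) = F_l(u,x) + \eta$ for $v$ in terms of $(u,x,\eta)$, and then set $\Psi_l(u,x,\eta) \triangleq v$. With this construction, property (iii) is built into the definition, property (i) follows by observing that $v = u$ is a solution when $\eta = 0$, and property (ii) comes out of the Lipschitz estimate inherent in the fixed-point argument.

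First I would compute the partial derivative $\partial_v F_l$ at $v = 0$. From Definition \ref{def: Taylor approximation function} and \eqref{eq:def-exp-on-T-l}, a direct differentiation gives
\begin{equation*}
\partial_v F_l(0,x) \cdot w = \sum_{\alpha \in \mathcal{A}_1(l)} V_{(\alpha)}(x) \, w^\alpha, \qquad w \in \mathfrak{g}^{(l)},
\end{equation*}
where $w^\alpha$ denotes the $\mathrm{e}_{(\alpha)}$-coordinate of $w$ in the tensor basis. Applying this with $w = \mathrm{e}_{[\beta]}$ and using the standard rough-path identity $\sum_\alpha V_{(\alpha)}(x)(\mathrm{e}_{[\beta]})^\alpha = V_{[\beta]}(x)$ (which reflects the fact that the algebra homomorphism $\mathrm{e}_i \mapsto V_i$ intertwines tensor commutators with Lie brackets of vector fields, and is verified by induction on $|\beta|$), I would conclude
\begin{equation*}
\partial_v F_l(0,x) \cdot \mathrm{e}_{[\beta]} = V_{[\beta]}(x), \qquad \beta \in \mathcal{A}_1(l).
\end{equation*}

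Next, since $l \geq l_0$, the uniform hypoellipticity assumption \eqref{eq:unif-hypo-assumption} forces the collection $\{V_{[\beta]}(x) : \beta \in \mathcal{A}_1(l)\}$ to span $\mathbb{R}^N$ with a non-degeneracy constant that is uniform in $x$. Consequently $\partial_v F_l(0,x) : \mathfrak{g}^{(l)} \to \mathbb{R}^N$ is surjective with a Moore--Penrose right inverse $P(0,x)$ whose operator norm is bounded by a constant depending only on $l_0$ and $V$; because the associated Gram matrix is uniformly non-degenerate, $P(0,x)$ is moreover $C_b^\infty$ in $x$. Since $F_l$ is $C_b^\infty$ in $(v,x)$ on any bounded range of $v$, the right inverse extends to a uniformly bounded smooth right inverse $P(v,x)$ of $\partial_v F_l(v,x)$ for all $\|v\|_\textsc{HS} < r_0$, with $r_0 = r_0(l,V) > 0$.

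Finally I would run a Newton-type iteration $v_{n+1} = v_n - P(v_n,x)(F_l(v_n,x) - F_l(u,x) - \eta)$ starting from $v_0 = u$, and use the uniform bounds on $\|P\|$ and on the second derivative of $F_l$ to ensure contraction and convergence to a solution $v = \Psi_l(u,x,\eta)$ for all $(u,x,\eta)$ with $\|u\|_\textsc{HS}, |\eta| < r$, for some $r$ depending only on $l$ and $V$. The same quantitative estimates yield the Lipschitz bound (ii) via $\|v-u\|_\textsc{HS} \leq \|P(u,x)\|\,|\eta| + O(|\eta|^2)$, while the smooth dependence of the iteration on its parameters ensures that $\Psi_l$ is $C_b^\infty$. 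The main technical obstacle is exactly the uniformity in $x$: because $\mathbb{R}^N$ is non-compact one cannot argue by compactness, and one must use the \emph{quantitative} character of \eqref{eq:unif-hypo-assumption} together with the $C_b^\infty$ boundedness of the vector fields to guarantee that $r$, $A$, and the bounds on the derivatives of $\Psi_l$ can all be chosen independently of $x$.
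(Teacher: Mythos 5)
Your proposal is correct and is essentially the argument the paper relies on: the paper does not prove this lemma itself but quotes it from Kusuoka--Stroock \cite[Lemma 3.15]{KS87}, and that construction rests, exactly as yours does, on the uniform non-degeneracy of $JF_l(0,x)$ coming from the hypoellipticity assumption (the paper's Lemma \ref{lem: nondegeneracy of F}, also borrowed from \cite{KS87}), followed by a quantitative implicit-function/contraction argument that gives (i) and (iii) by construction and (ii) from the Lipschitz estimate. The one step you should tighten is the $C_b^\infty$ regularity of the limit: instead of invoking ``smooth dependence of the iteration'' (the solution set of $F_l(v,x)=F_l(u,x)+\eta$ is a manifold, so the limit must be pinned down), restrict to the slice $v=u+JF_l(u,x)^{*}\theta$ with $\theta\in\mathbb{R}^N$, where the linearization $JF_l(u,x)JF_l(u,x)^{*}$ is uniformly invertible for $\|u\|_{\textsc{HS}}<r$, and apply the ordinary implicit function theorem with uniform bounds; this yields a well-defined $\Psi_l(u,x,\eta)=u+JF_l(u,x)^{*}\theta(u,x,\eta)$ satisfying (i)--(iii) with $r,A$ depending only on $l$ and the vector fields.
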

 
 \begin{rem}
 The property (ii) is not explicitly stated in  \cite[Lemma 3.15]{KS87}. It is only clear when one develops the construction of $\Psi_l$ carefully. 
 \end{rem}

The intuition behind the function $\Psi_l$ can be described as follows. Let $y\triangleq x+F_l(u,x)$ so that  $u$ joins $x$ to $y$ in the sense of Taylor approximation. Then $v\triangleq\Psi_l(u,x,\eta)$ joins $x$ to $y+\eta$, i.e. $x+F_l(v,x)=y+\eta$. In particular, $\Psi_l(0,x,y-x)$ gives an element in $\mathfrak{g}^{(l)}$ which joins $x$ to $y$ in the sense of Taylor approximation, provided $|y-x|<r$. 

We begin with some preliminary steps toward the proof of Theorem \ref{thm: local comparison}, namely the lower bound on the control distance function $d(\cdot,\cdot)$, and the upper bound for the case $H<1/2$.
\begin{lem}\label{lem: preliminary case}
Assume that the vector fields in equation \eqref{eq: hypoelliptic SDE} satisfy the uniform hypoellipticity assumption \eqref{eq:unif-hypo-assumption} with constant $l_0$. Let $d=d_{H}$ be the control distance introduced in Definition \ref{def: CM bridge}. Then the following bounds hold true.

\vspace{2mm} 

\noindent (i) For all $H\in(1/4,1)$ and $x,y$ such that $|x-y|\leq 1$, we have
$$d(x,y)\geq C_1|x-y|.$$\\
(ii) Whenever $H\in(1/4,1/2)$ we have
$$d(x,y)\leq C_2|x-y|^\frac{1}{l_0}.$$
\end{lem}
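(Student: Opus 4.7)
The plan is to take an arbitrary $h \in \Pi_{x,y}$ and derive a pathwise control of $|y-x|$ by the Cameron--Martin norm of $h$. First I would reduce to the case when $\|h\|_{\bar{\mathcal{H}}}$ is bounded: if $d(x,y) \geq 1/2$, then since $|x-y|\leq 1$ the estimate $d(x,y) \geq |x-y|/2$ is immediate, so one may assume without loss of generality that there exists $h \in \Pi_{x,y}$ with $\|h\|_{\bar{\mathcal{H}}} \leq 2 d(x,y) \leq 1$. Next, starting from the identity
\[
y - x = \int_0^1 V(\Phi_t(x;h))\, dh_t,
\]
I would invoke Proposition \ref{prop: variational embedding} to embed $\bar{\mathcal{H}}$ into $C_0^{q\text{-var}}$ for some $q>(H+1/2)^{-1}$ (automatically $q<2$ since $H>1/4$) when $H\leq 1/2$, or into $C_0^H$ when $H>1/2$. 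Combining this with standard pathwise Young--ODE estimates (e.g.\ \cite[Theorem 10.14]{FV10} in the first case, or a H\"older analogue in the second) yields
\[
|y-x| \leq C_{H,V}\left(\|h\|_{\bar{\mathcal{H}}} \vee \|h\|_{\bar{\mathcal{H}}}^q\right)\leq C_{H,V} \|h\|_{\bar{\mathcal{H}}},
\]
where the last inequality uses $\|h\|_{\bar{\mathcal{H}}} \leq 1$ together with $q \geq 1$ to absorb the exponent. Taking the infimum over $h \in \Pi_{x,y}$ gives $d(x,y) \geq C_1 |x-y|$.

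\textbf{Part (ii).} The plan exploits the classical sub-Riemannian geometry available in the irregular regime. The uniform hypoellipticity assumption \eqref{eq:unif-hypo-assumption} is precisely a uniform bracket-generating condition of step $l_0$, so the classical sub-Riemannian ball--box theorem (in the spirit of Nagel--Stein--Wainger) produces constants $\delta, C > 0$ such that for $|x-y| < \delta$ the sub-Riemannian distance $d_{\mathrm{BM}}$ associated with $\{V_1,\ldots,V_d\}$ satisfies $d_{\mathrm{BM}}(x,y) \leq C |x-y|^{1/l_0}$. Consequently one may select a path $h \in W_0^{1,2}([0,1];\mathbb{R}^d) \cap \Pi_{x,y}$ (the skeleton ODE \eqref{eq: skeleton ODE} is well-posed in the Young sense for $W^{1,2}$-paths) with $\|h\|_{W^{1,2}} \leq 2C |x-y|^{1/l_0}$. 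The crucial ingredient, specific to $H<1/2$, is the continuous embedding $W_0^{1,2} \hookrightarrow \bar{\mathcal{H}}$, which follows from the analytic description of $\bar{\mathcal{H}}$ via the kernel $K$ in \eqref{eq: analytic expression of K} (for $H<1/2$ the operator $K$ is built from fractional integration of order $2H<1$, which lies one full derivative below $H+1/2 \leq 1$, making $\bar{\mathcal{H}}$ strictly larger than $W_0^{1,2}$). Applying this embedding to $h$ yields
\[
d_H(x,y) \leq \|h\|_{\bar{\mathcal{H}}} \leq C_H \|h\|_{W^{1,2}} \leq C_2 |x-y|^{1/l_0},
\]
which is the desired bound.

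\textbf{Main obstacles.} Part (i) is largely routine; the only technical point is the a priori reduction to bounded $\|h\|_{\bar{\mathcal{H}}}$, needed so that the exponent $q \geq 1$ in the pathwise Young estimate does not spoil the final linear bound. Part (ii) hinges on the embedding $W_0^{1,2} \hookrightarrow \bar{\mathcal{H}}$, which is only available in the irregular regime $H \leq 1/2$. This asymmetry is precisely what forces the full upper bound of Theorem \ref{thm: local comparison} (handling $H > 1/2$) to rely on a much more delicate iteration scheme based on the Taylor approximation map $\Psi_{l_0}$ of Lemma \ref{lem: the Psi function} coupled with the ball--box estimate of Proposition \ref{prop: ball-box estimate}; this is deferred to the core of the section.
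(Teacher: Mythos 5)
Your proposal is correct and follows essentially the same route as the paper: part (i) is precisely the pathwise argument of the elliptic companion paper (bound $|y-x|\leq C_{H,V}\bigl(\|h\|_{\bar{\mathcal{H}}}\vee\|h\|_{\bar{\mathcal{H}}}^{q}\bigr)$ via the variational/H\"older embedding and the rough/Young ODE estimates, after reducing to $\|h\|_{\bar{\mathcal{H}}}$ bounded), and part (ii) is the comparison $d_H\leq C_H d_{\mathrm{BM}}$ obtained from the continuous embedding $W_0^{1,2}\hookrightarrow\bar{\mathcal{H}}$ for $H<1/2$ combined with the classical sub-Riemannian estimate $d_{\mathrm{BM}}(x,y)\leq C|x-y|^{1/l_0}$, which the paper attributes to Kusuoka--Stroock rather than to the ball--box theorem. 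The only caveat is that the embedding $W_0^{1,2}\hookrightarrow\bar{\mathcal{H}}$ is a genuine fractional-calculus lemma (proved in the companion paper), not just a consequence of comparing orders of integration, but your use of it coincides with the paper's.
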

\begin{proof}
Claim (i) follows from the exact same argument as in \cite[Theorem 3.3]{GOT20} for the elliptic case.
Claim (ii) stems from the fact that when $H<1/2$, we have
\begin{align}\label{distance upper bound}d(x,y)\leq C_H\, d_\mathrm{BM}(x,y)\end{align}
where $d_{BM}$ stands for the distance for the Brownian motion case. Note that \eqref{distance upper bound} can be easily justified by the fact that
$$d(x,y)\leq \|h\|_{\bar{\mathcal{H}}}\leq C_H\|h\|_{W^{1,2}}$$
for any $h$ joining $x$ and $y$ in the differential equation sense (cf. \cite[Lemma 2.8]{GOT20}). Then, with \eqref{distance upper bound} in hand, our claim (ii) follows from the Brownian hypoelliptic analysis \cite{KS87}.


\end{proof}

From now on, we focus on the case $H>1/2$. It is not surprising that this is the harder case since the Cameron-Martin subspace $\bch$ gets smaller as $H$ increases.  First, we need to make use of the following scaling property of the Cameron-Martin norm. The proof is immediate by using the relation \eqref{eq: inner product in terms of fractional integrals} and a simple change of variables. We denote $\bar{\mathcal{H}}([0,T])$ (respectively, $d_T(x,y)$) as the Cameron-Martin subspace (respectively, the control distance function) associated with fractional Brownian motion over $[0,T]$.

\begin{lem}\label{lem: CM scaling}
Let $0<T_1<T_2$. Given  $h\in\bar{\mathcal{H}}([0,T_1])$, define $\tilde{h}_t\triangleq h_{T_1 t/T_2}$ for $0\leq t\leq T_2$. Then $\tilde{h}\in\bar{\mathcal{H}}[0,T_2]$, and \begin{align}\label{bar-H norm rescaling}
\|\tilde{h}\|_{\bar{\mathcal{H}}([0,T_{2}])}=\lp \frac{T_{1}}{T_{2}}\rp ^{H}\|h\|_{\bar{{\cal H}}([0,T_{1}])}.
\end{align}In particular, we have
\begin{align}\label{distance rescaling}
d_{1}(x,y)=T^{H}d_{T}(x,y),\ \ \ \forall T>0,\ x,y\in\mathbb{R}^{N}.
\end{align}
\end{lem}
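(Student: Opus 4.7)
The plan is to deduce the scaling via the Wiener chaos characterization of $\bar{\mathcal{H}}$ given in Definition~\ref{def:bar-H}, combined with the self-similarity of fractional Brownian motion. First, I would work on a probability space carrying a fBm $B$ on $[0,T_2]$ and introduce the time-changed process $\hat{B}_s \triangleq (T_1/T_2)^{H} B_{T_2 s/T_1}$ for $s \in [0,T_1]$. A one-line computation from \eqref{eq:cov-fBm} gives $\mathbb{E}[\hat{B}_s \hat{B}_u] = R(s,u)$, so $\hat{B}$ is a fBm on $[0,T_1]$. Moreover, its first Wiener chaos coincides with the first chaos $\mathcal{C}_1$ of $B$, since both are the closed linear span of $\{B_t : t \in [0,T_2]\}$ in $L^2(\Omega,\mathbb{P})$.

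With this in hand, given $h \in \bar{\mathcal{H}}([0,T_1])$, Definition~\ref{def:bar-H} applied to $\hat{B}$ produces $Z \in \mathcal{C}_1$ such that $h_s = \mathbb{E}[\hat{B}_s Z]$ and $\|h\|_{\bar{\mathcal{H}}([0,T_1])} = \|Z\|_{L^2(\Omega)}$. A direct substitution then yields
\begin{equation*}
\tilde{h}_t = h_{T_1 t/T_2} = \mathbb{E}[\hat{B}_{T_1 t/T_2} Z] = (T_1/T_2)^{H}\, \mathbb{E}[B_t Z] = \mathbb{E}[B_t \tilde{Z}],
\end{equation*}
with $\tilde{Z} \triangleq (T_1/T_2)^H Z \in \mathcal{C}_1$. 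Applying Definition~\ref{def:bar-H} now to $B$ on $[0,T_2]$ identifies $\|\tilde{h}\|_{\bar{\mathcal{H}}([0,T_2])}$ with $\|\tilde{Z}\|_{L^2(\Omega)} = (T_1/T_2)^H \|Z\|_{L^2(\Omega)}$, which is exactly~\eqref{bar-H norm rescaling}.

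For \eqref{distance rescaling}, the idea is to transport the scaling through the ODE~\eqref{eq: skeleton ODE}, which is invariant under affine time reparametrization. If $h \in \Pi_{x,y}$ on $[0,T_1]$ and $x_s = \Phi_s(x;h)$, then $\tilde{x}_t \triangleq x_{T_1 t/T_2}$ solves the analogous ODE on $[0,T_2]$ driven by $\tilde{h}$ with $\tilde{x}_{T_2} = y$, so $h \mapsto \tilde{h}$ is a bijection between the join-sets on $[0,T_1]$ and $[0,T_2]$. Taking the infimum of both sides of \eqref{bar-H norm rescaling} over this bijection gives $d_{T_2}(x,y) = (T_1/T_2)^H d_{T_1}(x,y)$, and specializing to $T_1 = 1$, $T_2 = T$ yields~\eqref{distance rescaling}. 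Since the argument relies only on an isometric chaos correspondence and a change of variable in the ODE, no genuine obstacle arises; the only care needed is to set up both time intervals coherently on a single probability space via self-similarity, which is handled in the first step.
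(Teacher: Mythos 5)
Your proof is correct, but it takes a genuinely different route from the paper's. The paper proves \eqref{bar-H norm rescaling} analytically: it invokes the characterization $\|h\|_{\bar{\mathcal H}}=\|K^{-1}h\|_{L^{2}}$ from \eqref{eq: inner product in terms of fractional integrals}, writes $K^{-1}$ explicitly via the fractional-derivative expression coming from \eqref{eq: analytic expression of K}, and performs a change of variables inside the resulting $L^{2}$ integral (the computation being written out for $H>1/2$, the case where the lemma is actually invoked). You instead stay entirely at the level of Definition \ref{def:bar-H}: you realize a fBm on $[0,T_{1}]$ as $\hat{B}_{s}=(T_{1}/T_{2})^{H}B_{T_{2}s/T_{1}}$ via self-similarity, observe that its first chaos coincides with $\mathcal{C}_{1}$, and then the representation $h_{s}=\mathbb{E}[\hat{B}_{s}Z]$ transforms directly into $\tilde{h}_{t}=\mathbb{E}[B_{t}\tilde{Z}]$ with $\tilde{Z}=(T_{1}/T_{2})^{H}Z$, giving the norm identity with no fractional calculus at all and uniformly in $H\in(0,1)$ (no case split $H>1/2$ versus $H\leq 1/2$). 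The only implicit point is that $\bar{\mathcal H}([0,T_{1}])$ and its norm are intrinsic to the law of fBm, so that using the particular realization $\hat{B}$ is legitimate; your construction on a single probability space handles this. Your derivation of \eqref{distance rescaling} via reparametrization invariance of the Young/rough ODE and the bijection $h\mapsto\tilde h$ between $\Pi_{x,y}$ over the two horizons matches what the paper leaves as "easily deduced," and it is complete (for $T<1$ one simply applies the same identity with the roles of the horizons exchanged). In short: the paper's proof is a direct computation consistent with its analytic toolkit, while yours is a shorter, more structural argument that trades the explicit kernel formula for the self-similarity of the driving Gaussian process.
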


We also need the following lemma about the free nilpotent group $G^{(l)}$ which allows us to choose a "regular" path $\gamma$ with $S_l(\gamma)=u$  for all $u\in G^{(l)}$. 
 
\begin{lem}\label{lem: quasi-inverse of signature map}
Let $l\geq1$. For each $M>0$, there exists a constant $C=C_{l,M}>0$, such that for every $u\in G^{(l)}$ with $\|u\|_{\textsc{CC}}\leq M$, one can find a smooth path $\gamma:[0,1]\rightarrow\mathbb{R}^d$ which satisfies: 

\vspace{2mm} 

\noindent (i) $S_l(\gamma)=u$;
\\
(ii) $\dot{\gamma}$ is supported on $[1/3,2/3]$;
\\
(iii) $\|\ddot{\gamma}\|_{\infty;[0,1]}\leq C$.
\end{lem}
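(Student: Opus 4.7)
My plan is to combine a scaling reduction with an explicit construction based on the commutator (Chow--Rashevsky) structure of $G^{(l)}$.

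First I would reduce to the case $M=1$ by exploiting the dilation $\delta_\lambda$ on $T^{(l)}$. Directly from \eqref{eq:def-norm-CC} one checks that $\|\cdot\|_{\textsc{CC}}$ is $1$-homogeneous under $\delta_\lambda$, so $\|u\|_{\textsc{CC}}\leq M$ yields $\|\delta_{1/M}u\|_{\textsc{CC}}\leq 1$. On the other hand, for any smooth path $\tilde\gamma$ and any $\lambda>0$, the definition \eqref{eq:def-iterated-intg} of iterated integrals immediately gives $S_l(\lambda\tilde\gamma)=\delta_\lambda S_l(\tilde\gamma)$, and scaling in the target also multiplies the acceleration by $\lambda$. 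Hence, if I can produce a path $\tilde\gamma$ for $\delta_{1/M}u$ with $\|\ddot{\tilde\gamma}\|_\infty\leq C_{l,1}$, $S_l(\tilde\gamma)$ equal to $\delta_{1/M}u$, and $\dot{\tilde\gamma}$ supported in $[1/3,2/3]$, then $\gamma\triangleq M\tilde\gamma$ solves the general case with $C_{l,M}=M\,C_{l,1}$.

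Second, for $\|u\|_{\textsc{CC}}\leq 1$, I would write $u=\exp(v)$ with $v\in\mathfrak{g}^{(l)}$ and expand $v=\sum_{\alpha\in\mathcal{A}_1(l)} c_\alpha\,\mathrm{e}_{[\alpha]}$ in the Hall-type basis \eqref{eq:def-basis-algebras}. By the ball-box estimate (Proposition~\ref{prop: ball-box estimate}) we have $|c_\alpha|\leq C_l$, since all norms on $\mathfrak{g}^{(l)}$ are equivalent. Now I use the standard fact, obtained by applying the Baker--Campbell--Hausdorff formula recursively in the nilpotent Lie algebra $\mathfrak{g}^{(l)}$, that $u$ admits a finite factorization
\[
u=\exp(s_1\,\mathrm{e}_{[\alpha_1]})\otimes\cdots\otimes\exp(s_N\,\mathrm{e}_{[\alpha_N]}),
\]
with $N=N(l,d)$ and $|s_j|\leq C_l$ depending only on the $|c_\alpha|$. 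For each factor $\exp(s_j\,\mathrm{e}_{[\alpha_j]})$ with $\alpha_j=(i_1,\ldots,i_r)$ I construct an elementary smooth ``commutator curve'' $\eta_j:[0,1]\to\R^d$ whose signature realizes $\exp(s_j\,\mathrm{e}_{[\alpha_j]})$: for $r=1$ this is simply a smoothed affine segment in direction $\mathrm{e}_{i_1}$, and for $r\geq 2$ this is the usual concatenation of $\pm\mathrm{e}_{i_k}$ excursions encoding the iterated bracket, mollified at the corners. Since $|s_j|$ is bounded by $C_l$, each $\eta_j$ can be arranged so that $\|\ddot\eta_j\|_\infty\leq C_l$.

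Third, I would concatenate the $N$ paths $\eta_1,\ldots,\eta_N$ inside $[1/3,2/3]$, using smooth bump-type reparametrizations that vanish to all orders at the junctions and at the two endpoints $1/3$ and $2/3$; this both produces a $C^\infty$ path $\gamma$ with $\dot\gamma$ supported in $[1/3,2/3]$ and, by the multiplicativity of the signature under concatenation, ensures $S_l(\gamma)=u$. Since $N$, the coefficients $|s_j|$, and the norms $\|\ddot\eta_j\|_\infty$ are all bounded by constants depending only on $l$, we obtain $\|\ddot\gamma\|_\infty\leq C_{l,1}$, and the scaling step then yields the desired $C_{l,M}$. The main obstacle is the algebraic BCH-type factorization step together with its quantitative control; a cleaner but less constructive alternative is to invoke compactness: $\overline{B}_{\textsc{CC}}(0,M)\subset G^{(l)}$ is compact by Proposition~\ref{prop: ball-box estimate}, so once one exhibits any continuous right inverse of $S_l$ into $C^2([0,1];\R^d)$ with velocity supported in $[1/3,2/3]$, the supremum of $\|\ddot{\sigma(u)}\|_\infty$ on this compact set is automatically finite. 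Either route reduces the problem to the same combinatorial core.
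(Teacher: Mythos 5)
Your overall architecture (dilation reduction to $M=1$, factorization of $u$ into exponentials of basis brackets with coefficients controlled on the compact CC-ball, realization of each factor by a path, concatenation, and smooth reparametrization into $[1/3,2/3]$) is viable and genuinely different from the paper's proof, which instead inducts on the truncation level $l$ and peels off the top layer $\mathcal{L}_l$ at each step. But there is a genuine gap in your realization step: the ``usual concatenation of $\pm\mathrm{e}_{i_k}$ excursions encoding the iterated bracket, mollified at the corners'' does \emph{not} have truncated signature exactly $\exp(s_j\,\mathrm{e}_{[\alpha_j]})$ in $G^{(l)}$ when the bracket length $r=|\alpha_j|$ is strictly smaller than $l$. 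By Baker--Campbell--Hausdorff, the group commutator $\exp(A)\otimes\exp(B)\otimes\exp(-A)\otimes\exp(-B)$ equals $\exp\bigl([A,B]+\text{brackets of higher degree}\bigr)$, and these corrections do not vanish after truncation at level $l>r$; already for $d=2$, $l=3$ the square loop realizes $[\mathrm{e}_1,\mathrm{e}_2]$ only modulo nonzero degree-$3$ terms. Hence the concatenation of your $\eta_j$'s has signature equal to $u$ only up to higher-order errors, and property (i) fails as the construction stands. (Your BCH factorization itself is fine in substance --- it is the existence of Malcev coordinates of the second kind for the simply connected nilpotent group $G^{(l)}$, with the coefficient bounds following from continuity on the compact CC-ball --- but note you must use a genuine Hall-type basis rather than the full spanning family $\{\mathrm{e}_{[\alpha]}\}$, which is linearly dependent.)

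The repair is either to drop the explicit commutator curves and invoke Chow--Rashevskii (Remark \ref{connecting by smooth path}) to fix, once and for all, a smooth path $\eta_\alpha$ with derivative supported in $(0,1)$ and $S_l(\eta_\alpha)=\exp(\mathrm{e}_{[\alpha]})$ \emph{exactly} for each basis bracket; dilation then handles the coefficient, since $S_l(\lambda\eta_\alpha)=\delta_\lambda\exp(\mathrm{e}_{[\alpha]})=\exp(\lambda^{r}\mathrm{e}_{[\alpha]})$, and your factorization-plus-compactness argument goes through. Alternatively one corrects the higher-order errors iteratively, which is precisely the paper's device: there the only exact realizations needed are of elements of $\exp(\mathcal{L}_k)$ inside $G^{(k)}$ (truncation level equal to the layer degree), where the top layer is central so concatenation of the scaled basis paths is additive, and the level-$l$ mismatch created by the level-$(l-1)$ path is absorbed into a single top-layer factor $w\in\exp(\mathcal{L}_l)$ via the induction. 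Compared with the paper, your route (once repaired) avoids the induction and gives a single global factorization, at the price of invoking the second-kind-coordinates theorem and a nonconstructive appeal to Chow for each basis direction; the paper's induction uses only the abelian structure of the top layer and elementary estimates.
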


\begin{proof}
We first prove the claim for a generic element $u\in\exp(\mathcal{L}_{k})$, seen as an element of $G^{(k)}$. Let $\{a_{1},\ldots,a_{d_{k}}\}$ be a basis of $\mathcal{L}_{k}$
where $d_{k}\triangleq\dim\mathcal{L}_{k}$\,. Given $u\in\exp(\mathcal{L}_{k})$,  we can write $u=\exp(a)$ with
\begin{align}\label{linear combination}a=\lambda_{1}a_{1}+\cdots+\lambda_{d_{k}}a_{d_{k}}\in\mathcal{L}_{k}\end{align}
for some $\lambda_{1},\ldots,\lambda_{d_{k}}\in\mathbb{R}.$ Since
we assume that $\|u\|_{\text{CC}}\leq M,$ according to the ball-box
estimate (cf. Proposition \ref{prop: ball-box estimate}) and the fact that $a\in\mathcal{L}_{k}$, we have 
\begin{align}\label{distance bound for a}
\|a\|_{\mathrm{HS}}=\|u-\mathbf{1}\|_{\mathrm{HS}}\leq C_{1,l,M}.
\end{align}
Moreover, $\mathcal{L}_{k}$ is a finite dimensional vector space, on which all norms are equivalent.  Thus relation \eqref{distance bound for a} yields 
\begin{equation}\label{eq: boundedness of lambda_i}
\max_{1\leq i\leq d_{k}}|\lambda_{i}|\leq C_{2,l,M}.
\end{equation}

Now recall from Remark \ref{connecting by smooth path} that for each $a_{i}$ in \eqref{linear combination} one can  choose
a smooth path $\alpha_i:[0,1]\rightarrow \mathbb{R}^d$ such
that $S_{k}(\alpha_{i})=\mathrm{exp}(a_{i})$ and $\dot{\alpha}_i$ is supported on $[1/3,2/3]$.  Set 
\[R_{k}\triangleq\max\lcl  \|\ddot{\alpha}_{i}\|_{\infty;[0,1]}:1\leq i\leq d_{k}\rcl .\]Note that $R_{k}$ is a constant depending only on $k$.
We construct a smooth path $\gamma:[0,d_{k}]\rightarrow\mathbb{R}^d$ by
\begin{align}\label{definition gamma}
\gamma\triangleq\lp |\lambda_{1}|^{\frac{1}{k}}\alpha_{1}^{\mathrm{sgn}(\lambda_{1})}\rp \sqcup\cdots\sqcup\lp |\lambda_{d_{k}}|^{\frac{1}{k}}\alpha_{d_{k}}^{\mathrm{sgn}(\lambda_{d_{k}})}\rp ,
\end{align}
where $\alpha_{i}^{-1}$ denotes the reverse of $\alpha_{i},$ and $\sqcup$
denotes path concatenation. Then $\dot{\gamma}$ is obviously compactly supported, and we also claim that $S_{k}(\gamma)=u$.  Indeed, it follows from \eqref{definition gamma} that
\begin{align}\label{c0}
S_{k}(\gamma) & =S_{k}\lp |\lambda_{1}|^{\frac{1}{k}}\alpha_{1}^{{\rm sgn}(\lambda_{1})}\rp \otimes\cdots\otimes S_{k}\lp |\lambda_{d_{k}}|^{\frac{1}{k}}\alpha_{d_{k}}^{{\rm sgn}(\lambda_{d_{k}})}\rp 
\notag\\
 & =\delta_{|\lambda_{1}|^{\frac{1}{k}}}\lp S_{k}\lp \alpha_{1}^{{\rm sgn}(\lambda_{1})}\rp \rp \otimes\cdots\otimes\delta_{|\lambda_{d_{k}}|^{\frac{1}{k}}}\lp S_{k}\lp \alpha_{d_{k}}^{{\rm sgn}(\lambda_{d_{k}})}\rp \rp \notag\\
 & =\delta_{|\lambda_{1}|^{\frac{1}{k}}}\lp \exp({\rm sgn}(\lambda_{1})a_{1})\rp \otimes\cdots\otimes\delta_{|\lambda_{d_{k}}|^{\frac{1}{k}}}\lp \exp({\rm sgn}(\lambda_{d_{k}})a_{d_{k}})\rp ,
\end{align}
where we have used the properties of the dilation, recalled in Section \ref{sec:free-nilpotent}, and the relation between signatures and $G^{(l)}$ given in \eqref{eq:def-iterated-intg} -- \eqref{eq:signature-smooth-x}. 
In addition, since each element $\exp(\lambda_{i}a_{i})$ above sits in $\exp(\cl_{k})$, the tensor product in $G^{(k)}$ is reduced to
\begin{equation}\label{c1}
S_{k}(\gamma)
=
\exp(\lambda_{1}a_{1})\otimes\cdots\otimes\exp(\lambda_{d_{k}}a_{d_{k}})
 =\exp(a)=u.
\end{equation}
We have thus found a path $\gamma$ with compactly supported derivative such that $S_{k}(\gamma)=u$.
In addition, from the definition of $R_{k}$ and (\ref{eq: boundedness of lambda_i}), we have
\[
\|\ddot{\gamma}\|_{\infty;[0,d_{k}]}\leq R_{k}\cdot\lp \max_{1\leq i\leq d_{k}}|\lambda_{i}|\rp ^{\frac{1}{k}}\leq C_{3,l,M}.
\]
By suitable rescaling and adding trivial pieces on both ends if necessary,
we may assume that $\gamma$ is defined on $[0,1]$ and $\dot{\gamma}$ is supported on $[1/3,2/3]$. In this way, we have$$\|\ddot{\gamma}\|_{\infty;[0,1]}\leq C_{k}\cdot C_{3,k,M}
\triangleq C_{4,k,M},$$where $C_{k}$ is the constant coming from the rescaling. Therefore, our assertion (i)--(iii)
holds for  $u$ which are elements of $\exp(\mathcal{L}_{k})$.

With the help from the previous special case, we now prove the lemma
by induction on $l$. The case when $l=1$ is obvious, as we can simply
choose $\gamma$ to be a straight line segment. Suppose now that the claim
is true on $G^{(l-1)}$. We let $M>0$ and $u\in G^{(l)}$ with $\|u\|_{\text{CC}}\leq M.$
Define $v\triangleq\pi^{(l-1)}(u)$ where $\pi^{(l-1)}:\ G^{(l)}\rightarrow G^{(l-1)}$
is the canonical projection. We obviously have 
\[
\|v\|_{\text{CC}}\leq\|u\|_{\text{CC}}\leq M,
\]
where the CC-norm of $v$ is taken on the group $G^{(l-1)}$. According
to the induction hypothesis, there exists a constant $C_{l-1,M},$
such that we can find a smooth path $\alpha:[0,1]\rightarrow\mathbb{R}^d$ which satisfies (i)--(iii) in the assertion of Lemma \ref{lem: quasi-inverse of signature map}, for $v=S_{l-1}(\alpha)$ and constant $C_{l-1,M}$. Define 
\begin{align}\label{def: w}w\triangleq\lp S_{l}(\alpha)\rp ^{-1}\otimes u,\end{align}
{where the tensor product is defined on $G^{(l)}$}. Then 
note that owing to the fact that $\|u\|_{\text{CC}}\leq M$, we have
\begin{align*}
\|w\|_{\textsc{CC}} & \leq\|S_{l}(\alpha)\|_{\textsc{CC}}+\|u\|_{\textsc{CC}}\leq\|\alpha\|_{1{\rm -var};[0,1]}+\|u\|_{\textsc{CC}}
  \leq\frac{1}{2}\|\ddot{\alpha}\|_{\infty;[0,1]}+M.
 \end{align*}
 Therefore, thanks to the induction procedure applied to $v=S_{l-1}(\alpha)$, we get
 \begin{align*}
 \|w\|_{\textsc{CC}}\leq \frac{1}{2}C_{l-1,M}+M\triangleq C_{5,l,M}.
\end{align*}

We claim that $w\in\exp(\mathcal{L}_{l})$. This can be proved in the following way. 

\vspace{2mm} 

\noindent(i) Write $u=\exp(l_0+l_h)$, where $l_0\in\mathfrak{g}^{(l-1)}$ and $l_h\in\mathcal{L}_l$. Recall $v\triangleq\pi^{(l-1)}(u)$. We argue  that $v=\exp(l_0)\in G^{(l-1)}$ as follows:  since $l_h\in \mathcal{L}_l$, any product of the form $l^p_h\otimes l^q_0=0$ whenever $p,q>0$. Taking into account the definition \eqref{eq:def-exp-on-T-l} of the exponential function, we get that
\begin{align}\label{u imply v}
u=\exp(l_0+l_h)\ \Longrightarrow\ v=\exp(l_0)\in G^{(l-1)}.
\end{align}

\noindent(ii) Recall that our induction hypothesis asserts that $v=S_{l-1}(\alpha)$, thus according to \eqref{u imply v} we have $S_{l-1}(\alpha)=\exp(l_0)$.  Thanks to the same kind of argument as in (i), we get  $S_l(\alpha)=\exp(l_0+l_h')\in G^{(l)}$ for some $l_h'\in\mathcal{L}_l$. 

\noindent(iii) In order to conclude that $w\in\exp(\mathcal{L}_l)$, we go back to relation \eqref{def: w}, which can now be read as
$$w=\lp \exp(l_0+l'_h)\rp ^{-1}\otimes\exp{(l_0+l_h)}.$$
According to Campbell-Baker-Hausdorff formula and taking into account the fact that
 \[
[l_{0},l_{0}]=[l_{0},l_{h}]=[l_{0},l_{h}']=[l_{h},l_{h}']=0\in\mathfrak{g}^{(l)},
\] we conclude that $w=\exp(l_h-l_h')$ and thus $w\in\exp(\mathcal{L}_l)$.

\vspace{2mm} 

We are now ready to summarize our information and conclude our induction procedure. Namely, for $u\in G^{(l)}$, we can recast relation \eqref{def: w} as
\begin{align}\label{decomposition u}
u=S_l(\alpha)\otimes w,
\end{align}
and we have just proved that $w\in\exp(\mathcal{L}_l)$. Hence relation \eqref{c1} asserts that $w$ can be written as $w=S_l(\beta)$, where $\beta: [0,1]\to \mathbb{R}^d$ satisfying relation (i)-(iii) in Lemma \ref{lem: quasi-inverse of signature map} with $C=C_{6,l,M}$.
Now set $\gamma\triangleq\alpha\sqcup\beta$ and rescale it so that it is defined on $[0,1]$ and its derivative path is supported on $[1/3,2/3]$. Then, recalling our decomposition \eqref{decomposition u}, we have 
\[
S_{l}(\gamma)=S_{l}(\alpha)\otimes S_{l}(\beta)=S_{l}(\alpha)\otimes w=u,
\]
and,  moreover, the following upper bound holds true
\begin{align*}
\|\ddot{\gamma}\|_{\infty;[0,1]} & \leq36\max\lcl  \|\ddot{\alpha}\|_{\infty;[0,1]},\|\ddot{\beta}\|_{\infty;[0,1]}\rcl  \leq C_{7,l,M}. 
\end{align*} Therefore our induction procedure is established, which finishes the proof.

\end{proof}

We conclude this subsection by  stating  a  convention on the group $G^{(l)}$ which will ease notation in our future computations. 

\begin{conv}\label{conv: convention}

Since $\mathfrak{g}^{(l)}$ is a finite dimensional vector space on which differential calculus is easier to manage, we will frequently identify $G^{(l)}$ with $\mathfrak{g}^{(l)}$ through the exponential diffeomorphism without further mention.  
In this way, for instance, $S_l(w)=u$ means $S_l(w)=\exp(u)$ if $u\in\mathfrak{g}^{(l)}$. The same convention will apply to other similar relations when the meaning is clear from context.
For norms on $\mathfrak{g}^{(l)}$, we denote $\|u\|_\mathrm{CC}\triangleq\|\exp(u)\|_{\mathrm{CC}}$. As for the $\rm HS$-norm, note that \[
C_{1,l}\|u\|_{{\rm HS}}\leq\|\exp(u)-{\bf 1}\|_{{\rm HS}}\leq C_{2,l}\|u\|_{{\rm HS}}
\]for all $u\in\mathfrak{g}^{(l)}$ satisfying $\|\exp(u)-{\bf 1}\|_{{\rm HS}}\wedge\|u\|_{{\rm HS}}\leq1$. Therefore, up to a constant depending only on $l$, the notation $\|u\|_{\rm{HS}}$ can either mean the $\rm HS$-norm of $u$ or $\exp(u)-\bf 1$. This will not matter because we are only concerned with local estimates. The same convention applies to the distance functions $\rho_\mathrm{CC}$ and $\rho_\mathrm{HS}$.
\end{conv}

\subsection{Proof of Theorem \ref{thm: local comparison}}
\label{sec:proof-thm-12}

In this section we give the details  to complete the proof of Theorem \ref{thm: local comparison}, namely the local comparison between the distance $d(\cdot,\cdot)$ and the Euclidean distance. 
Thanks to  Lemma \ref{lem: preliminary case}, we only focus on the upper bound for $H>1/2$.

Recall that $\Psi_{l}(u,x,\eta)$ is the function given by Lemma \ref{lem: the Psi function}.
This function allows us to construct elements in $\mathfrak{g}^{(l)}$ joining
two points in the sense of Taylor approximation locally. In what follows,
we take $l=l_{0}$ (where $l_0$ stands for the hypoellipticity constant) and we will omit the subscript $l$ for simplicity (e.g. $F=F_l$ and $\Psi=\Psi_l$) .
We will also identify $G^{(l)}$ with $\mathfrak{g}^{(l)}$ in the
way mentioned in Convention \ref{conv: convention}. We now divide our proof into several steps.

\medskip

\noindent{\it Step 1: Construction of an approximating sequence.}
Let $\delta<r$ be a constant to be chosen later on, where $r$ is
the constant appearing in the domain of $\Psi$ in Lemma \ref{lem: the Psi function}. Consider $x,y\in\mathbb{R}^{N}$ with $|x-y|<\delta.$

We are going to construct three sequences $$\{x_{m}\}\subseteq\mathbb{R}^{N},\ 
\{u_{m}\}\subseteq\mathfrak{g}^{(l_{0})},\ \{h_{m}\}\subseteq C^{\infty}([0,1];\mathbb{R}^{d})$$
inductively. We start with $x_{1}\triangleq x$ and define the rest
of them by the following general procedure in the order $$u_{1}\rightarrow h_{1}\rightarrow x_{2}\rightarrow u_{2}\rightarrow h_{2}\rightarrow x_{3}\rightarrow\cdots.$$
To this aim, suppose we have already defined $x_{m}.$ Set 
\begin{align}\label{def: um}
u_{m}\triangleq\Psi(0,x_{m},y-x_{m}),\quad\text{and}\quad \bar{u}_{m}\triangleq\delta_{\|u_{m}\|_{\text{CC}}^{-1}}u_{m}.
\end{align}
By Lemma \ref{lem: the Psi function}, the first condition in \eqref{def: um} states that $u_m$ is an element of $\mathfrak{g}^{(l_0)}$ such that \begin{align}\label{xm and y}x_m+F(u_m,x_m)=y,\end{align} while the second condition in \eqref{def: um} ensures that  $\|\bar{u}_{m}\|_{\text{CC}}=1$. Once $u_m$ is defined, we construct $h_m$ in the following way: let $\bar{h}_{m}:[0,1]\rightarrow\mathbb{R}^{d}$
be the smooth path given by Lemma~\ref{lem: quasi-inverse of signature map} such that $S_{l_{0}}(\bar{h}_{m})=\bar{u}_{m}$,
$\dot{\bar{h}}_{m}$ is supported on $[1/3,2/3],$ and $\|\ddot{\bar{h}}_{m}\|_{\infty;[0,1]}\leq C_{l_{0}}$.
Define 
\begin{align}\label{def: hm}
h_{m}\triangleq\|u_{m}\|_{\text{CC}}\bar{h}_{m},
\end{align}
so that the truncated signature of $h_m$ is exactly $u_m$ (here recall the Convention \ref{conv: convention}). More specifically, we have:
\[
S_{l_{0}}(h_{m})=S_{l_{0}}(\|u_{m}\|_{\text{CC}}\cdot\bar{h}_{m})=\delta_{\|u_{m}\|_{\text{CC}}}(S_{l_{0}}(\bar{h}_{m}))=\delta_{\|u_{m}\|_{\text{CC}}}(\overline{u}_{m})=u_{m}.
\]
Taking into account  the definition \eqref{eq:def-norm-CC} of the $\textsc{CC}$-norm, it is immediate that
\begin{equation}
\|u_{m}\|_{\text{CC}}\leq\|h_{m}\|_{1\text{-var};[0,1]}\leq\|u_m\|_{\text{CC}}\|\bar{h}_m\|_{1\text{-var};[0,1]}\leq C_{l_{0}}\|u_{m}\|_{\text{CC}}\, ,\label{eq: controlling h_m in terms of u_m CC}
\end{equation}where the last inequality stems from the fact that $\bar{h}_m$ has a bounded second derivative.
Eventually we define 
\begin{align}\label{def: xm}
x_{m+1}\triangleq\Phi_{1}(x_{m};h_{m}),
\end{align}
where recall that $\Phi_{t}(x;h)$ is the solution flow of the ODE
(\ref{eq: ode}) driven by $h$ over $[0,1].$

\medskip
\noindent{\it Step 2: Checking the condition $|y-x_m|< r$.}
Recall that in Lemma \ref{lem: the Psi function} we have to impose $\|u\|_{\text{HS}}<r$ and $|\eta|<r$ in order to apply $\Psi$. In the context of \eqref{def: um} it means that we should make sure that 
\begin{align}\label{xm close to y}|y-x_m|<r, \quad\text{for\ all} \ m.\end{align}
We will now choose $\delta_1$ small enough such that if $|y-x|<\delta_1$, then \eqref{xm close to y} is satisfied.
This will guarantee that $u_{m}$
is well-defined  by Lemma \ref{lem: the Psi function} and we will also be able to write down several useful estimates
for $x_{m}$ and $u_{m}.$ Our first condition on $\delta_1$ is that $\delta_1\leq r$, so that if $|x-y|<\delta_1$, we can define $u_1$ by a direct application of Lemma \ref{lem: the Psi function}. We will now prove by induction that if $\delta_1$ is chosen small enough, then condition \eqref{xm close to y} is satisfied. To this aim, assume that $|x_m-y|<\delta_1$. Then one can apply Lemma \ref{lem: the Psi function} in order to define $u_m, h_m$ and $x_{m+1}$. We also get the following estimate: 
\begin{align}\label{um bound by xm-y}
\|u_{m}\|_{\textsc{HS}}\leq A|x_{m}-y|<A\delta_{1},
\end{align}
where $A$ is the constant appearing in Lemma \ref{lem: the Psi function}. In addition, let us require $\delta_{1}\leq1/A$ so that
$\|u_{m}\|_{\textsc{HS}}\leq1.$ Recalling relations \eqref{xm and y} and \eqref{def: xm} we get
\begin{align*}
|x_{m+1}-y|=|\Phi_1(x_m,h_m)-x_m-F(S_{l_0}(h_m),x_m)|.
\end{align*}
Thus applying successively the Taylor type estimate of \cite[Proposition 10.3]{FV10} and relation~\eqref{eq: controlling h_m in terms of u_m CC} we end up with
$$|x_{m+1}-y| \leq C_{V,l_{0}}\|h_{m}\|_{{\rm 1}-{\rm var};[0,1]}^{1+l_{0}}\leq C_{V,l_{0}}\|u_{m}\|_{\textsc{CC}}^{1+l_{0}}.$$
The quantity $\|u_m\|_{\textsc{CC}}$ above can be bounded thanks to the ball-box estimate of Proposition~\ref{prop: ball-box estimate}, for which we observe that the dominating term in \eqref{eq:bnd-CC-to-HS} is $\rho_{\text{HS}}(g_1,g_2)^{1/l_0}$ since our element $u_m$ is bounded by one in HS-norm. We get 
\begin{align*}
 |x_{m+1}-y|\leq  C_{V, l_0}\|u_m\|_{\textsc{CC}}^{1+l_0}  \leq C_{V,l_{0}}\|u_{m}\|_{\textsc{HS}}^{1+\frac{1}{l_{0}}}  \leq C_{V,l_{0}}A^{1+\frac{1}{l_{0}}}|x_{m}-y|^{1+\frac{1}{l_{0}}}.
\end{align*}
Summarizing our considerations so far, we have obtained the estimate
\begin{equation}
|x_{m+1}-y|\leq C_{1,V,l_{0}}\|u_{m}\|_{\textsc{CC}}^{1+l_{0}}\leq C_{2,V,l_{0}}|x_{m}-y|^{1+\frac{1}{l_{0}}}.\label{eq: x-u recursive estimate}
\end{equation}
On top of the inequalities $\delta_1<r$ and $\delta_1\leq 1/A$ imposed previously, we will also assume that  
$
C_{2,V,l_{0}}\delta_{1}^{1/l_{0}}\leq{1}/{2},
$
which easily yields the relation
\begin{equation}
|x_{m+1}-y|\leq\frac{1}{2}|x_{m}-y|<\frac{1}{2}\delta_{1}<\delta_{1}.\label{eq: x_m recursive estimate}
\end{equation}
For our future computations we will thus set 
$$\delta_{1}\triangleq r\wedge A^{-1}\wedge(2C_{2,V,l_{0}})^{-l_{0}}.
$$
According to our bound \eqref{eq: x_m recursive estimate}, we can guarantee that if $|x-y|<\delta_1$, then $|x_m-y|<\delta_1<r$ for all $m$. In addition, an easy induction procedure performed on inequality \eqref{eq: x_m recursive estimate} leads to the following relation, valid for all $m\geq 1$:

\begin{equation}
|x_{m}-y|\leq2^{-(m-1)}|x-y|.\label{eq: exponential decay of x_m-y}
\end{equation}
Together with the second inequality of (\ref{eq: x-u recursive estimate}),
we obtain that 
\begin{equation}
\|u_{m}\|_{\textsc{CC}}\leq C_{3,V,l_{0}}2^{-\frac{m}{l_{0}}}|x-y|^{\frac{1}{l_{0}}},\ \ \ \forall m\geq1.\label{eq: exponential decay of u_m}
\end{equation}

We will now choose a constant $\delta_2\leq\delta_1$ such that the sequence $\{\|u_m\|_{\text{CC}}; m\geq 1\}$ is decreasing with $m$ when $|x-y|<\delta_2$. This property will be useful for our future considerations. Towards this aim, observe that applying successively \eqref{eq:bnd-CC-to-HS}, \eqref{um bound by xm-y} and \eqref{eq: x-u recursive estimate} we get
\begin{equation}\label{d1}
\|u_{m+1}\|_{\textsc{CC}}\leq C_{l_0}\|u_{m+1}\|_{\text{HS}}^{\frac{1}{l_0}}\leq C_{4,V,l_0}\|u_m\|_{\textsc{CC}}^{1+\frac{1}{l_0}}.
\end{equation}
Hence invoking the second inequality in (\ref{eq: x-u recursive estimate}) we have
\begin{equation}\label{eq: recursive estimate for u_m}
\|u_{m+1}\|_{\text{CC}}\leq  C_{5,V,l_{0}}|x-y|^{\frac{1}{l^2_{0}}}\|u_{m}\|_{\textsc{CC}}.
\end{equation}
Therefore, let us consider a new constant $\delta_{2}>0$ such that 
\begin{equation*}
C_{5,V,l_0}\delta_2^{\frac{1}{l^2_0}}<1.
\end{equation*}
If we choose $|x-y|<\delta$ with  $\delta\triangleq\delta_{1}\wedge\delta_{2},$  equation \eqref{eq: recursive estimate for u_m} can be recast as
\begin{align}\label{monotone um}
\|u_{m+1}\|_{\text{CC}}\leq\|u_m\|_{\text{CC}}.
\end{align}
Note that $\delta=\delta_1\wedge\delta_2$
depends only on $l_0$ and the vector fields, but not on the Hurst parameter $H$. We have thus shown that the application of Lemma \ref{lem: the Psi function} is valid in our context. 

\medskip
\noindent{\it Step 3: Construction of a path joining $x$ and $y$ in the sense of differential equation.} Our next aim is to obtain a path $\tilde{h}$ joining $x$ and $y$ along the flow of equation \eqref{eq: ode}. The first step in this direction is to rescale $h_{m}$ in a suitable way. Namely, set $a_{1}\triangleq0,$
and for $m\geq1$, define recursively the following sequence:
\[
a_{m+1}\triangleq\sum_{k=1}^{m}\|u_m\|_\textsc{CC},\ \ I_{m}\triangleq[a_{m},a_{m+1}],\ \ I\triangleq\overline{\bigcup_{m=1}^{\infty}I_{m}}.
\]
It is clear that $|I_{m}|=\|u_m\|_\textsc{CC},$ and $I$
is a compact interval since the sequence $\{\|u_m\|_\textsc{CC}: m\ge 1\}$ is summable
according to (\ref{eq: exponential decay of u_m}).
We also define a family of function $\{\tilde{h}_m, m\geq1\}$ by
\begin{align}\label{def: tilde hm}
\tilde{h}_{m}(t) & \triangleq h_{m}\lp \frac{t-a_{m}}{a_{m+1}-a_{m}}\rp ,\ \ t\in I_{m},
\end{align}
and the concatenation of the first $\tilde{h}_m$'s is
\begin{align}\label{def: concatenation hm}
\tilde{h}^{(m)}\triangleq\tilde{h}_{1}\sqcup\cdots\sqcup\tilde{h}_{m}:\ [0,a_{m+1}]\rightarrow\mathbb{R}^{d}.
\end{align}
We will now bound the derivative  of $\tilde{h}_{m}.$ Specifically, we first use equation \eqref{def: tilde hm} to get
\[
\sup_{m\geq1}\|\dot{\tilde{h}}^{(m)}\|_{\infty;[0,a_{m+1}]}=\sup_{m\geq1}\|\dot{\tilde{h}}_{m}\|_{\infty;I_{m}}=\sup_{m\geq1}\frac{1}{|I_{m}|}\cdot\|\dot{{h}}_{m}\|_{\infty;[0,1]}.
\]
Then resort to relation \eqref{def: hm}, which yields
\[
\sup_{m\geq1}\|\dot{\tilde{h}}^{(m)}\|_{\infty;[0,a_{m+1}]}=\sup_{m\geq1}\lcl  \frac{\|u_{m}\|_{\text{CC}}}{|I_{m}|}\cdot\|\dot{\bar{h}}_{m}\|_{\infty;[0,1]}\rcl .
\]
Since $\|u_m\|_{\textsc{CC}}=|I_m|$ we end up with
\begin{align}\label{bound derivative tilde hm}
\sup_{m\geq1}\|\dot{\tilde{h}}^{(m)}\|_{\infty;[0,a_{m+1}]}=\sup_{m\geq1}\lcl \|\dot{\bar{h}}_m\|_{\infty; [0,1]}\rcl \leq C_{l_0},
\end{align}
where the last inequality stems from the fact that $\|\ddot{\bar{h}}_m\|_{\infty;[0,1]}\leq C_{l_0}.$ 

We can now proceed to the construction of the announced path joining $x$ and $y$. Namely, set
\begin{align}\label{def: tilde h}
\tilde{h}\triangleq\sqcup_{m=1}^{\infty}\tilde{h}_{m}:I\rightarrow\mathbb{R}^{d}.
\end{align}
Then according to \eqref{bound derivative tilde hm} we have that $\tilde{h}$ is a smooth function from $I$ to $\mr^d$. We  claim that $\Phi_1(x;\tilde{h})=y$, where $\Phi$ has to be understood in the sense of equation \eqref{eq: skeleton ODE}. Indeed, set $$z_{t}=\Phi_t(x; \tilde{h}),\quad t\in I.$$
From the construction of $x_{m}$ in \eqref{def: xm} and the fact that
$\tilde{h}|_{[0,a_{m+1}]}=\tilde{h}^{(m)}$ asserted in \eqref{def: tilde h}, we have
\begin{align}\label{zt at am}
x_{m+1}=x+\sum_{\alpha=1}^{d}\int_{0}^{a_{m+1}}V_{\alpha}(z_{t})d\tilde{h}_{t}^{\alpha}.
\end{align}
Since $x_{m+1}\rightarrow y$ as $m\rightarrow\infty$ which can
be easily seen from (\ref{eq: exponential decay of x_m-y}), one can take
limits in~\eqref{zt at am} to conclude that 
\[
y=x+\sum_{\alpha=1}^{d}\int_{0}^{|I|}V_{\alpha}(z_{t})d\tilde{h}_{t}^{\alpha}.
\]
Therefore, $\tilde{h}$ is a smooth path joining $x$ and $y$ in the sense of differential equations.

\medskip

\noindent{\it Step 4: Strategy for the upper bound.}  According to relation~\eqref{distance rescaling} in Lemma~\ref{lem: CM scaling} on the scaling property, we have
\begin{align}
d(x,y) & =|I|^{H}d_{|I|}(x,y)
\leq|I|^{H}\|\tilde{h}\|_{\bar{{\cal H}}([0,|I|])}\nonumber \\
 & =\lim_{m\rightarrow\infty}\Big( \lp \sum_{k=1}^{m}|I_{k}|\rp ^{H}\|\tilde{h}^{(m)}\|_{\bar{{\cal H}}([0,a_{m+1}])}\Big) ,\label{eq: estimating the control metric}
\end{align}
where the last relation stems from the definition \eqref{def: tilde h} of $\tilde{h}$.

In order to estimate the right hand-side of \eqref{eq: estimating the control metric}, we use the relation \eqref{eq: inner product in terms of fractional integrals} for the Cameron-Martin norm to get
$$\|\tilde{h}^{(m)}\|^2_{\bch([0,a_{m+1}])}=\|K^{-1}\tilde{h}^{(m)}\|^2_{L^2([0,a_{m+1}]; dt)}.$$
Using the definition of $K$ given by \eqref{eq: analytic expression of K}, we are led to
$$\|\tilde{h}^{(m)}\|^2_{\bch([0,a_{m+1}])}=C_H\int_0^{a_{m+1}}\lln t^{H-\frac{1}{2}}D_{0+}^{H-\frac{1}{2}}(s^{\frac{1}{2}-H}\dot{\tilde{h}}^{(m)}(s))(t)\rrn ^{2}dt.$$
By the formula for the fractional derivative (cf. \cite[Equation (2.4)]{GOT20}), we obtain
\begin{align*}
\|\tilde{h}^{(m)}\|_{\bar{{\cal H}}([0,a_{m+1}])}^{2}  
& =C_{H}\cdot\int_{0}^{a_{m+1}}\Big| t^{H-\frac{1}{2}}\lp t^{1-2H}\dot{\tilde{h}}^{(m)}(t)\\
 & \ \ \ +\lp H-\frac{1}{2}\rp \int_{0}^{t}\frac{t^{\frac{1}{2}-H}\dot{\tilde{h}}^{(m)}(t)-s^{\frac{1}{2}-H}\dot{\tilde{h}}^{(m)}(s)}{(t-s)^{H+\frac{1}{2}}}ds\rp \Big| ^{2}dt.
\end{align*}
We now  split the interval $[0,a_{m+1}]$ as $[0, a_{m+1}]=\cup_{k=0}^mI_k$ and use the elementary inequality $(a+b+c)^2\leq 3(a^2+b^2+c^2)$ in order to get
\begin{align}\label{decomposition to Q}\|\tilde{h}^{(m)}\|_{\bar{{\cal H}}([0,a_{m+1}])}^{2}\leq Q_1+Q_2+Q_3,\end{align}
with
\begin{align}
Q_{1} & \triangleq\sum_{k=1}^{m}\int_{I_{k}}\lln t^{H-\frac{1}{2}}\lp t^{1-2H}\dot{\tilde{h}}_{k}(t)\rp \rrn ^{2}dt\triangleq\sum_{k=1}^m Q_{1,k},\label{Q1}\\
Q_{2} & \triangleq\sum_{k=1}^{m}\int_{I_{k}}
\Big| t^{H-\frac{1}{2}}\sum_{l=1}^{k-1}\int_{I_{l}}\frac{t^{\frac{1}{2}-H}\dot{\tilde{h}}_{k}(t)-s^{\frac{1}{2}-H}\dot{\tilde{h}}_{l}(s)}{(t-s)^{H+\frac{1}{2}}}ds
\Big| ^{2}dt\triangleq\sum_{k=1}^mQ_{2,k},\label{Q2}\\
Q_{3} & \triangleq\sum_{k=1}^{m}\int_{I_{k}}
\Big| t^{H-\frac{1}{2}}\int_{a_{k}}^{t}\frac{t^{\frac{1}{2}-H}\dot{\tilde{h}}_{k}(t)-s^{\frac{1}{2}-H}\dot{\tilde{h}}_{k}(s)}{(t-s)^{H+\frac{1}{2}}}ds
\Big| ^{2}dt\triangleq\sum_{k=1}^mQ_{3,k}.\label{Q3}
\end{align}

We are now reduced to bound the above three terms. For the sake of conciseness we will mainly focus on $Q_{2}$, which is the most demanding in terms of singularities. We leave to the patient reader the non-rewarding task of checking details for $Q_{1}$ and $Q_{3}$ or refer to \cite{GOT19} for the complete details.

\medskip
\noindent{\it Step 5: Bound for $Q_2$.} 
In order to estimate $Q_{2}$,  we handle each $Q_{2,k}$ in \eqref{Q2} separately and we resort to the elementary change of variables
\[
u\triangleq\frac{t-a_{k}}{a_{k+1}-a_{k}},\ \quad\text{and}\ \quad v\triangleq\frac{s-a_{l}}{a_{l+1}-a_{l}}.
\]
We also express the terms $\dot{\tilde{h}}_k$ in \eqref{Q2} in terms of $\dot{\bar{h}}_k$. Thanks to some easy algebraic manipulations, we get
\begin{align}\label{express Q2k}
Q_{2,k}=\int_{0}^{1}
\Big| \sum_{l=1}^{k-1}\int_{0}^{1}\frac{\frac{\dot{h}_{k}(u)}{|I_{k}|}-\lp \frac{a_{k}+u|I_{k}|}{a_{l}+v|I_{l}|}\rp ^{H-\frac{1}{2}}\cdot\frac{\dot{h}_{l}(v)}{|I_{l}|}}{(a_{k}+u|I_{k}|-a_{l}-v|I_{l}|)^{H+\frac{1}{2}}}|I_{l}|dv
\Big|^{2}
|I_{k}|du.
\end{align}
In the expression above, notice that for $l\leq k-1$ we have
$$a_k+u|I_k|-a_l-v|I_l|=q_{k,l}(u,v),$$
where
\begin{equation}\label{d11}
q_{k,l}(u,v)=(1-v)|I_l|+|I_{l+1}|+\cdots+|I_{k-1}|+u|I_k|.
\end{equation}
Therefore, invoking the trivial bounds $a_k+u|I_k|\leq \sum_{j_1=1}^k|I_{j_1}|$ and $a_l+v|I_l|\geq\sum_{j_2=1}^{l-1}|I_{j_2}|$, and bounding trivially the differences by sums, we obtain
\begin{align}\label{bound Q2k}
Q_{2,k}\leq C_{H}\int_{0}^{1}
\Big| \sum_{l=1}^{k-1}\int_{0}^{1}\frac{\lln \dot{\bar{h}}_{k}(u)\rrn +\lp \frac{\sum_{j_1=1}^k|I_{j_1}|}{\sum_{j_2=1}^{l-1}|I_{j_2}|}\rp ^{H-\frac{1}{2}}
\cdot\lln \dot{\bar{h}}_{l}(v)\rrn }{|q_{k,l}(u,v)|^{H+\frac{1}{2}}}|I_{l}|dv
\Big|^{2}|I_{k}|du.
\end{align}
In order to obtain a sharp estimate in \eqref{bound Q2k}, we want to take advantage of the fact that $\dot{\bar{h}}_l$ is supported on $[1/3,2/3]$. This allows to avoid the singularities in $u, v$ close to $0$ and $1$. We thus introduce the intervals
\[
J_{1}\triangleq[0,1/3],\ J_{2}\triangleq[1/3,2/3],\ J_{3}\triangleq[2/3,1]
\]
and decompose the expression \eqref{bound Q2k} as follows,
$$Q_{2,k}\leq C_H\sum_{p,q=1}^{3} L_{k,p,q},$$
where the quantity $L_{k,p,q}$ is defined by
\begin{align}\label{def: Lkpq}
L_{k,p,q}\triangleq\int_{J_{p}}
\Big| \sum_{l=1}^{k-1}\int_{J_{q}}
\frac{\lln \dot{\bar{h}}_{k}(u)\rrn +\lp \frac{|I_{1}|+\cdots+|I_{k}|}{|I_{1}|+\cdots+|I_{l-1}|}\rp ^{H-\frac{1}{2}}\cdot\lln \dot{\bar{h}}_{l}(v)\rrn }
{|q_{k,l}(u,v)|^{H+\frac{1}{2}}}|I_{l}|dv
\Big| ^{2}|I_{k}|du,
\end{align}
for all $p,q=1,2,3.$  Notice again that since all the $\dot{\bar{h}}_k$ are  supported
on $[1/3,2/3]$, the only non-vanishing $L_{k,p,q}$'s are those for which  $p=2$ or $q=2$. Let us show how to handle the terms $L_{k,p,q}$ given by \eqref{def: Lkpq}, according to $q=1, 2$ and $q=3$.

Whenever 
 $q=1$ or $q=2$, regardless of the value of $p,$ it is easily seen from \eqref{d11} that we can  bound $q_{k,l}(u,v)$ from below uniformly by $C\sum_{j=l}^{k-1}|I_j|$. Thanks again to the fact that $\dot{\bar{h}}_k$ is uniformly bounded for all $k$, we obtain
\begin{align*}
 & \frac{\lln \dot{\bar{h}}_{k}(u)\rrn +\lp \frac{|I_{1}|+\cdots+|I_{k}|}{|I_{1}|+\cdots+|I_{l-1}|}\rp ^{H-\frac{1}{2}}\cdot\lln \dot{\bar{h}}_{l}(v)\rrn }{q_{k,l}(u,v)^{H+\frac{1}{2}}}\\
 & \leq\frac{C_{H,l_{0}}}{(|I_{l}|+\cdots+|I_{k-1}|)^{H+\frac{1}{2}}}\cdot\lp \frac{|I_{1}|+\cdots+|I_{k}|}{|I_{1}|+\cdots+|I_{l-1}|}\rp ^{H-\frac{1}{2}}.
\end{align*}
Summing the above quantity over $l$ and integrating over $[0,1]$, we end up with
\begin{align*}
L_{k,p,q}\leq C_{H,l_{0}}|I_{k}|\cdot\lp \sum_{l=1}^{k-1}\frac{|I_{l}|}{(|I_{l}|+\cdots+|I_{k-1}|)^{H+\frac{1}{2}}}\cdot\lp \frac{|I_{1}|+\cdots+|I_{k}|}{|I_{1}|+\cdots+|I_{l-1}|}\rp ^{H-\frac{1}{2}}\rp ^{2}.
\end{align*}
By lower bounding the quantity $|I_1|+\cdots+| I_{l-1}|$ above uniformly by $|I_1|$, we get
\begin{align}\label{bound Lkpq}
L_{k,p,q}\leq C_{H,l_{0}}|I_{k}|\cdot\lp \frac{\sum_{j=1}^k|I_{j}|}{|I_{1}|}\rp ^{2H-1}\cdot\lp \sum_{l=1}^{k-1}|I_{l}|^{\frac{1}{2}-H}\rp ^{2}.
\end{align}
Recall that we have shown in \eqref{monotone um} that $m\mapsto\|u_m\|_{\text{CC}}$ is a decreasing sequence. Since $\|u_m\|_{\textsc{CC}}=|I_m|$ we can bound uniformly $\sum_{l=1}^{k-1}|I_l|^{1/2-H}$ by $k|I_1|^{1/2-H}$ and $|I_1|^{-1}\sum_{j=1}^k|I_j|$ by $k$. Plugging this information into \eqref{bound Lkpq} we obtain,
\begin{align}\label{Lkpq q12}L_{k,p,q}\leq C_{H,l_0}k^{2H+1}|I_k|^{2(1-H)},\end{align}
which is our bound for $L_{k,p,q}$ when $q\in\{1,2\}$.


Let us now bound $L_{k,p,q}$ for  $q=3$ and $p=2.$ In this case, going back to the definition~\eqref{def: Lkpq} of $L_{k,p,q}$, we have that $\dot{\bar{h}}_l(v)=0$ for $v\in J_q$. Thus we get
\begin{align}
L_{k,2,3} & =\int_{J_{2}}\lln \sum_{l=1}^{k-1}\int_{J_{3}}\frac{\lln \dot{\bar{h}}_{k}(u)\rrn |I_{l}|dv}{((1-v)|I_{l}|+|I_{l+1}|+\cdots+|I_{k-1}|+u|I_{k}|)^{H+\frac{1}{2}}}\rrn ^{2}|I_{k}|du\nonumber\\
 & \leq C_{H,l_{0}}\lln \sum_{l=1}^{k-1}\int_{J_{3}}\frac{|I_{l}|dv}{((1-v)|I_{l}|+|I_{l+1}|+\cdots+|I_{k}|)^{H+\frac{1}{2}}}\rrn ^{2}\cdot|I_{k}|,\label{Lk23}
\end{align}
where we have used the boundedness of $\dot{\bar{h}}_k$ for the second inequality.
We can now evaluate the above $v$-integral  explicitly, which yields
\begin{align*}
 & \int_{J_{3}}\frac{|I_{l}|dv}{((1-v)|I_{l}|+|I_{l+1}|+\cdots+|I_{k}|)^{H+\frac{1}{2}}}\\
= & \frac{1}{\lp H-\frac{1}{2}\rp }\lp \frac{1}{(|I_{l+1}|+\cdots+|I_{k}|)^{H-\frac{1}{2}}}-\frac{1}{\lp \frac{1}{3}|I_{l}|+|I_{l+1}|+\cdots+|I_{k}|\rp ^{H-\frac{1}{2}}}\rp 
\leq  \frac{C_{H}}{|I_{k}|^{H-\frac{1}{2}}},
\end{align*}
where the second inequality is obtained by lower bounding trivially $|I_{l+1}+\cdots+|I_k|$ by $|I_k|$.
Summing this inequality over $l$ and plugging this information into \eqref{Lk23}, we get
\begin{align}
L_{k,2,3} & \leq C_{H,l_{0}}|I_{k}|\lp \frac{k}{|I_{k}|^{H-\frac{1}{2}}}\rp ^{2}\leq C_{H,l_{0}}k^{2H+1}|I_{k}|^{2(1-H)}.\label{eq: I_23 case}
\end{align}
Summarizing our considerations in this step, we have handled the cases $q=1,2$ and $(q,p)=(3,2)$ in \eqref{Lkpq q12} and \eqref{eq: I_23 case} respectively. 
Therefore, we obtain
\begin{align}\label{bound for Q2}
Q_{2}\leq C_{H,l_0}\sum_{k=1}^{m}k^{2H+1}|I_{k}|^{2(1-H)}.
\end{align}

\medskip
\noindent{\it Step 6: Conclusion.} Let us go back to the decomposition \eqref{decomposition to Q},
plug in our bound \eqref{bound for Q2}  on $Q_{2}$, and recall that similar bounds are available for $Q_{1}$ and $Q_{3}$.
We get\begin{align*}
\|\tilde{h}^{(m)}\|_{\bar{{\cal H}}([0,a_{m+1}])}^{2} & \leq C_{H}(Q_{1}+Q_{2}+Q_{3})\leq C_{H,l_{0}}\sum_{k=1}^{m}k^{2H+1}|I_{k}|^{2(1-H)}.
\end{align*}
In addition, we have  $|I_{k}|=\|u_{k}\|_{\textsc{CC}}$ and relation (\ref{eq: exponential decay of u_m}) asserts that $k\mapsto\|u_k\|_{\textsc{CC}}$ decays exponentially. Thus we get

\begin{eqnarray}
 &  & \lp \sum_{k=1}^m|I_k|\rp ^{2H}\|\tilde{h}^{(m)}\|_{\bar{{\cal H}}([0,a_{m+1}])}^{2}
  \leq C_{H,l_{0}}\lp \sum_{k=1}^{m}|I_{k}|\rp ^{2H}
  \lp \sum_{k=1}^{m}k^{2H+1}|I_{k}|^{2(1-H)}\rp \label{eq: estimating d by I_m} \nonumber\\
 &  & \ \ \ \leq C_{H,V,l_{0}}\lp \sum_{k=1}^{m}2^{-\frac{k}{l_{0}}}\rp ^{2H}\cdot\lp \sum_{k=1}^{m}k^{2H+1}2^{-\frac{2(1-H)}{l_{0}}k}\rp \cdot|x-y|^{\frac{2}{l_{0}}}\\
 &  & \ \ \ \leq C_{H,V,l_0}|x-y|^{\frac{2}{l_0}},\nonumber
\end{eqnarray}
where we have trivially bounded the partial geometric series for the last step.  Moreover the left hand-side of \eqref{eq: estimating d by I_m} converges to a quantity which is lower bounded by $d^2(x,y)$ as $m\to\infty$, thanks to \eqref{eq: estimating the control metric}.  Therefore, letting $m\to\infty$ in \eqref{eq: estimating d by I_m} we have obtained
\begin{equation}\label{d2}
d(x,y)^{2}\leq C_{H,V,l_{0}}|x-y|^{\frac{2}{l_{0}}},
\end{equation}
which concludes our proof of Theorem \ref{thm: local comparison}.

\subsection{Proof of Theorem \ref{thm:equiv-distances}}\label{s:equiv-distances}

The technique developed in the previous proof also provides part of the essential analysis for proving Theorem \ref{thm:equiv-distances}, namely the local Lipschitz equivalence of all control distance functions with different Hurst parameters. Our main idea for proving Theorem \ref{thm:equiv-distances} is to show that the control distance function $d(x,y)$ is locally Lipshictz equivalent to the "distance" function defined by \begin{equation}\label{eq:CanDis}
g(x,y)\triangleq\inf\{\|u\|_{{\rm CC}}:u\in\mathfrak{g}^{(l_0)}\ \text{and }x+F_{l_{0}}(u,x)=y\}.
\end{equation}Note that $g(x,y)$ is canonical in the sense that it does not depend on the Hurst parameter $H$.

The following lemma leads to one direction of the comparison. 
  
\begin{lem}\label{lem: d respect cc}
\label{lem: d < CC}There exist constants $C,\kappa>0$ such that
for any $u\in\mathfrak{g}^{(l_0)}$ with $\|u\|_{\textsc{HS}}<\kappa$,
we have 
\begin{align}\label{d respect cc}
d(x,x+F_{l_0}(u,x))\leq C\|u\|_{\textsc{CC}}.
\end{align}
\end{lem}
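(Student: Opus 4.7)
The plan is to adapt the iterative scheme used in the proof of Theorem \ref{thm: local comparison} (Section \ref{sec:proof-thm-12}), but initialize the construction with the given element $u$ rather than with $\Psi(0,x,y-x)$. The key observation is that the hypothesis $y \triangleq x + F_{l_0}(u,x)$ means that $u$ itself plays the role of "$u_1$" in the Taylor joining step, and we can track the estimates in terms of $\|u\|_{\textsc{CC}}$ rather than $|x-y|^{1/l_0}$. As in the earlier proof, the main case is $H>1/2$; for $H\leq 1/2$ one combines the inequality $d_H(x,y) \leq C_H\, d_{\textsc{BM}}(x,y)$ from Lemma~\ref{lem: preliminary case} with the standard sub-Riemannian bound $d_{\textsc{BM}}(x,y) \leq C\|u\|_{\textsc{CC}}$ coming from Kusuoka--Stroock's theory.

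For $H>1/2$, I set $x_1\triangleq x$, $u_1\triangleq u$, and for $m\geq 2$ I follow the same definitions as before: construct $h_m$ from $u_m$ via Lemma \ref{lem: quasi-inverse of signature map} (rescaled by $\|u_m\|_{\textsc{CC}}$), set $x_{m+1}\triangleq \Phi_1(x_m;h_m)$, and then put $u_{m+1}\triangleq\Psi_{l_0}(0,x_{m+1},y-x_{m+1})$. The consistency of the first step is guaranteed precisely by the assumption that $x+F_{l_0}(u_1,x)=y$. I will choose $\kappa>0$ small enough so that \eqref{eq: x-u recursive estimate} applies, giving the initial Taylor remainder
\begin{equation*}
|x_2 - y| \leq C_{V,l_0}\|u\|_{\textsc{CC}}^{1+l_0},
\end{equation*}
from which the subsequent iterates satisfy $|x_{m+1}-y|\leq \tfrac12|x_m-y|$ and $\|u_m\|_{\textsc{CC}} \leq C 2^{-(m-2)/l_0}\|u\|_{\textsc{CC}}^{1+1/l_0}$ for $m\geq 2$, exactly as in \eqref{eq: exponential decay of x_m-y}--\eqref{eq: exponential decay of u_m}.

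With these refined decay estimates in hand, I assemble the concatenated path $\tilde h$ as in \eqref{def: tilde h} and repeat the Cameron--Martin norm computation of Step 5 verbatim, noting that all the singular integral estimates there depend only on the fact that $|I_k|=\|u_k\|_{\textsc{CC}}$ is a nonincreasing sequence. This yields the same bound
\begin{equation*}
d(x,y)^2 \leq C_{H,l_0}\,\Bigl(\sum_{k\geq 1} |I_k|\Bigr)^{\!2H}\cdot\sum_{k\geq 1} k^{2H+1}|I_k|^{2(1-H)}.
\end{equation*}
Substituting $|I_1|=\|u\|_{\textsc{CC}}$ and the super-geometric decay of $|I_k|$ for $k\geq 2$, both the series $\sum_k |I_k|$ and $\sum_k k^{2H+1}|I_k|^{2(1-H)}$ are dominated by their $k=1$ contribution, so they are bounded respectively by $C\|u\|_{\textsc{CC}}$ and $C\|u\|_{\textsc{CC}}^{2(1-H)}$. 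Multiplying gives $d(x,y)^2\leq C\|u\|_{\textsc{CC}}^{2H}\cdot\|u\|_{\textsc{CC}}^{2(1-H)} = C\|u\|_{\textsc{CC}}^2$, which is \eqref{d respect cc}.

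The main obstacle is purely bookkeeping: one must verify that starting the recursion from $u_1=u$ (which is not a priori produced by $\Psi$ and whose CC-norm is not tied a priori to $|x-y|$) does not break the smallness conditions needed to apply $\Psi_{l_0}$ at step $m=2$. This is handled by choosing $\kappa$ small enough that $|x_2-y|\leq C\|u\|_{\textsc{CC}}^{1+l_0}<r$ (where $r$ is the radius in Lemma \ref{lem: the Psi function}) and so that the super-linear recursion contracts; once this smallness is enforced, the rest of the argument is a line-by-line transcription of Steps 3--6 in Section \ref{sec:proof-thm-12}, with $|x-y|^{1/l_0}$ systematically replaced by $\|u\|_{\textsc{CC}}$.
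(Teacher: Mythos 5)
Your proposal is correct and follows essentially the same route as the paper: the paper's proof of this lemma also runs the iteration of Theorem \ref{thm: local comparison} with $u_1=u$, $x_1=x$, $y=x+F_{l_0}(u,x)$, and bounds $d(x,y)^2\leq C\bigl(\sum_k|I_k|\bigr)^{2H}\sum_k k^{2H+1}|I_k|^{2(1-H)}$ by showing both sums are dominated by the $|I_1|=\|u\|_{\textsc{CC}}$ term, yielding $C\|u\|_{\textsc{CC}}^{2H}\|u\|_{\textsc{CC}}^{2(1-H)}=C\|u\|_{\textsc{CC}}^{2}$. The only (immaterial) difference is bookkeeping: the paper sums the series via the iterated bound $|I_m|\leq(C|I_1|)^{(1+1/l_0)^{m-1}}$ with $C|I_1|\leq 1/2$, whereas you use the geometric decay $2^{-(m-2)/l_0}\|u\|_{\textsc{CC}}^{1+1/l_0}$ inherited from the halving estimate.
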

\begin{proof}
We only consider the case when $H>1/2$, as the other case follows
from Lemma~2.8 in \cite{GOT20} and the result
for the diffusion case proved in \cite{KS87}. We use the same notation as in the proof of Theorem \ref{thm: local comparison}. In particular, we set up an inductive procedure as in that proof, starting by setting $u_{1}\triangleq u,$ $x_{1}\triangleq x$,  and $y\triangleq x+F_{l_0}(u,x).$
Choose $\kappa_{1}>0$ so that 
\begin{align}\label{cc small imply x close to y}
\|u\|_{\textsc{CC}}<\kappa_{1}\implies|y-x|<\delta,
\end{align}
where $\delta$ is the constant arising in the proof of Theorem \ref{thm: local comparison}. By constructing successively
elements $u_{m}\in\mathfrak{g}^{(l)}$ and intervals $I_{m}$, we obtain exactly as in \eqref{d1} that
\begin{equation}
|I_{m}|=\|u_{m}\|_{\textsc{CC}}\leq C_{V,l_0}\|u_{m-1}\|^{1+\frac{1}{l_0}}=C_{V,l_0}|I_{m-1}|^{1+\frac{1}{l_0}} .\label{eq: recursive estimate of I_m}
\end{equation}
In addition, along the same lines as (\ref{eq: estimating d by I_m}) and \eqref{d2}, we have 
\begin{align}\label{bound d by pieces}
d(x,x+F_{l_0}(u,x))^{2}\leq 
C_{H,l_0}
\lim_{m\to\infty}
\lp \sum_{k=1}^{m}|I_{k}|\rp ^{2H}\lp \sum_{k=1}^{m}k^{2H+1}|I_{k}|^{2(1-H)}\rp .
\end{align}

We now estimate the right hand side of \eqref{bound d by pieces} in a slightly different way from the previous step. Namely let us set $\alpha\triangleq1+1/l_0$. By iterating (\ref{eq: recursive estimate of I_m}),
we obtain that 
\[
|I_{m}|\leq\lp C_{V,l_0}|I_{1}|\rp ^{\alpha^{m-1}},\ \ \ \forall m\geq1.
\]
Therefore, we can bound the two terms on the right hand-side of \eqref{bound d by pieces} as follows:
\begin{align}\label{first term in bound d by pieces}
\sum_{k=1}^{m}|I_{k}|\leq C_{V,l_0}|I_{1}|\cdot\lp \sum_{k=1}^{m}(C_{V,l_0}|I_{1}|)^{\alpha^{k-1}-1}\rp 
\end{align}
and 
\begin{align}\label{second term in bound d by pieces}
\sum_{k=1}^{m}k^{2H+1}|I_{k}|^{2(1-H)}\leq(C_{V,l_0}|I_{1}|)^{2(1-H)}\cdot\lp \sum_{k=1}^{m}k^{2H+1}(C_{V,l_0}|I_{1}|)^{2(1-H)(\alpha^{k-1}-1)}\rp .
\end{align}
To estimate $|I_1|$, recall that $u_1=u$. We further choose $\kappa_{2}>0$ so that 
\begin{align}\label{u cc norm small imply I1 small}
\|u\|_{\textsc{CC}}<\kappa_{2}\implies C_{V,l_0}|I_{1}|=C_{V,l_0}\|u_1\|_{\textsc{CC}}\leq\frac{1}{2}.
\end{align}
By taking $\kappa=\kappa_1\wedge\kappa_2$, we can assume that both \eqref{cc small imply x close to y} and \eqref{u cc norm small imply I1 small} hold.
Also
note that both of the series 
\begin{align}\label{two series}
\sum_{m=1}^{\infty}\lp \frac{1}{2}\rp ^{\alpha^{m-1}-1}\ \quad\text{and}\ \quad \sum_{m=1}^{\infty}m^{2H+1}\lp \frac{1}{2}\rp ^{2(1-H)(\alpha^{m-1}-1)}
\end{align}
are convergent. Therefore, by plugging \eqref{two series} and \eqref{u cc norm small imply I1 small}, and then \eqref{first term in bound d by pieces} and \eqref{second term in bound d by pieces} into \eqref{bound d by pieces}, we have
\begin{equation*}
d(x,x+F_{l_0}(u,x))^{2}  \leq C_{H,V,l_0}|I_{1}|^{2H}\cdot|I_{1}|^{2(1-H)}
 =C_{H,V,l_0}|I_{1}|^{2}
 =C_{H,V,l_0}\|u\|_{\textsc{CC}}^{2} 
\end{equation*}provided that $\|u\|_{\mathrm{CC}}<\kappa$. 
Our result \eqref{d respect cc} thus follows.
\end{proof}

\noindent It is an immediate consequence of Lemma \ref{lem: d respect cc}  that \[
d(x,y)\leq Cg(x,y)
\]provided that $|x-y|$ is small so that the infimum in \eqref{eq:CanDis} can be taken over those $u$'s with $\|u\|_{\mathrm{CC}}<\kappa$. To complete the proof of Theorem \ref{thm:equiv-distances}, it remains to establish the other direction of the above inequality. We now provide the remaining details by using some pathwise estimates from rough path theory.

\begin{proof}[Proof of Theorem \ref{thm:equiv-distances}]

Suppose that $x,y\in\mathbb{R}^{N}$, and let $h\in\Pi_{x,y}$ be
a Cameron-Martin path that joins $x$ to $y$ in the sense of differential
equations, such that $\|h\|_{\bar{{\cal H}}}\leq2d(x,y).$ Set
$u\triangleq\log S_{l_{0}}(h)\in\mathfrak{g}^{(l_{0})}.$ Note that
$u$ does not join $x$ to $y$ in the sense of Taylor approximation.
However, we can use the map $\Psi_{l_{0}}$ to find a $w\in\mathfrak{g}^{(l_{0})}$
which does this, and more precisely $w$ is given by 
\[
w\triangleq\Psi_{l_{0}}(u,x,y-x-F_{l_{0}}(u,x)).
\]
When $|x-y|$ is small, the quantities $d(x,y),$ $\|h\|_{\bar{{\cal H}}},$
$u$ are all small and $w$ is well defined.

According to Proposition \ref{prop: ball-box estimate}, we have 
\[
\big|\|w\|_{{\rm CC}}-\|u\|_{{\rm CC}}\big|\leq\rho_{{\rm CC}}(w,u)\leq C_{1}\|w-u\|_{{\rm HS}}^{1/l_{0}},
\]
and thus 
\[
\|w\|_{{\rm CC}}\leq C_{1}\|w-u\|_{{\rm HS}}^{1/l_{0}}+\|u\|_{{\rm CC}}.
\]
We now estimate the two terms on the right hand side separately.

For the first term, according to Lemma \ref{lem: the Psi function}
(ii), we have 
\[
\|w-u\|_{{\rm HS}}\leq A|y-x-F_{l_{0}}(u,x)|=A|\Phi_{1}(x;h)-x-F_{l_{0}}(u,x)|
\]
with some constant $A$. By the rough path estimates of \cite[Corollary 10.15]{FV10}
and Proposition \ref{prop: variational embedding}, we know that 
\[
|\Phi_{1}(x;h)-x-F_{l_{0}}(u,x)|\leq C_{V,l_{0}}\|h\|_{q\text{-var}}^{\bar{l}_{0}}\leq C_{H,V,l_{0}}\|h\|_{\bar{{\cal H}}}^{\bar{l}_{0}},
\]
where $q\in[1,2)$ and $\bar{l}_{0}$ is an arbitrary number in $(l_{0},l_{0}+1).$
Therefore, we have 
\[
\|w-u\|_{{\rm HS}}\leq AC_{H,V,l_{0}}\|h\|_{\bar{{\cal H}}}^{\bar{l}_{0}}.
\]

For the second term, we claim that $\|u\|_{\textsc{CC}}\leq C_{H,l_0}\|h\|_{\bar{{\cal H}}}.$
Indeed, recall that $u=\log(S_{l_0}(h))$ and set $S_{l_0}(h)=g$. Since $h\in C^{q-\text{var}}$ with $q\in[1,2)$, Lyons' extension theorem  (cf. \cite[Theorem 2.2.1]{Lyons98}) implies that for all $i=1,...,l_0$ we have
\[
\|g_{i}\|_{\textsc{HS}}\leq C_{H,l_0}\|h\|_{q-{\rm var}}^{i}\, ,
\]
where $g_{i}$ is the $i$-th component of $g.$ If we define the
homogeneous norm $\interleave\cdot\interleave$ on $G^{(l_0)}$ by 
\[
\interleave\xi\interleave\triangleq\max_{1\leq i\leq l_0}\|\xi_{i}\|_{\textsc{HS}}^{1/i},\ \ \ \xi\in G^{(l_0)},
\]
we get the following estimate:
\begin{align}\label{bound u cc norm by qvar of h}
\interleave g\interleave\leq C_{H,l_0}\|h\|_{q-\text{var}},\quad\text{and}\quad \|u\|_{\textsc{CC}}\leq C_{H,l_0}\interleave g\interleave,
\end{align}
where the second inequality stems from the equivalence of homogeneous norms in $G^{(l_0)}$ (cf. \cite[Theorem 7.44]{FV10}).
Now combining the two inequalities in \eqref{bound u cc norm by qvar of h} and the variation estimate for Cameron-Martin paths (cf. \cite[Corollary 1]{FV06}), we end up with

\begin{align}\label{bound cc norm of u by H bar norm of h}
\|u\|_{\textsc{CC}}\leq C_{H,l_0}\|h\|_{q-{\rm var}}\leq C_{H,l_0}\|h\|_{\bch}.
\end{align}

Therefore, we arrive at 
\[
\|w\|_{{\rm CC}}\leq C_{H,V,l_{0}}\cdot\big(\|h\|_{\bar{{\cal H}}}^{\bar{l}_{0}/l_{0}}+\|h\|_{\bar{{\cal H}}}\big)\leq C_{H,V,l_{0}}\|h\|_{\bar{{\cal H}}}.
\]
By the fact that $w$ joins $x$ to $y$ in the sense of Taylor approximation as well as 
 the choice of $h$, we conclude that
\[
g(x,y)\leq C_{H,V,l_{0}}d(x,y).
\]
This completes the proof of Theorem \ref{thm:equiv-distances}.

\end{proof}

\section{Local lower estimate for the density of solution}\label{sec: local lower bound}

In this section, we develop the proof of Theorem \ref{thm: local lower estimate} under the uniform hypoellipticity assumption (\ref{eq:unif-hypo-assumption}). Comparing with the elliptic case in \cite{GOT20}, one faces a much more complex situation here. More specifically, the deterministic Malliavin covariance matrix of $X_{t}^{x}$  will not be uniformly non-degenerate (i.e. Lemma 3.6 in \cite{GOT20} is no longer true). Without this key ingredient, the entire elliptic argument will break down and one needs new approaches. Our strategy follows the main philosophy of Kusuoka-Stroock~\cite{KS87} in the diffusion case. However, there are non-trivial challenges in several key steps for the fractional Brownian setting, which require new ideas and methods. 

To increase readability, we first summarize the main strategy of the proof. 
Our analysis starts from the existence of the truncated signature of order $l$ for the fractional Brownian motion (cf. Proposition \ref{prop:fbm-rough-path}). Specifically, with our notation~\eqref{eq:signature-smooth-x} in mind, we write
\begin{align}\label{def: signature of B}
\Gamma_t\triangleq S_l(B)_{0,t}=1+\sum_{i=1}^l\int_{0<t_1<\cdots<t_i<t}dB_{t_1}\otimes \cdots\otimes dB_{t_i}.
\end{align}
In the sequel we will also use the truncated $\mathfrak{g}^{(l)}$-valued {\it log-signature} of $B$, defined by
\begin{align}\label{def: log signature of B}
U^{(l)}_t\triangleq\log S_l(B)_{0,t}.
\end{align}
Notice that $U^{(l)}_t$ features in relation \eqref{eq: formal Taylor expansion}, and more precisely the process
\begin{equation}\label{eq:def-X-l}
X_l(t,x)\triangleq x+F_l(U_{t}^{(l)},x)
\end{equation}
is the Taylor approximation of order $l$ for the solution of the rough equation \eqref{eq: hypoelliptic SDE} in small time (cf. relation \eqref{eq: formal Taylor expansion 2}).

With those preliminary notation in hand, we decompose the strategy towards the proof of Theorem \ref{thm: local lower estimate}  into three major steps.

\vspace{2mm}

\noindent
\textit{Step One}. According to the scaling property of fractional Brownian motion, a precise local lower estimate on the density of $U^{(l)}_t$ can be easily obtained from a general positivity property.

\noindent
\textit{Step Two}. When $l\geq l_0$, the hypoellipticity of the vector fields allows us to obtain a precise local lower estimate on the density of the process $X_l(t,x)$ defined by \eqref{eq:def-X-l} from the estimate on $U^{(l)}_t$ derived in step one.

\noindent
\textit{Step Three}. When $t$ is small, the density of $X_l(t,x)$ is close to the density of the actual solution in a reasonable sense, and the latter inherits the lower estimate obtained in step two. 

\vspace{2mm}

The above philosophy was first proposed by Kusuoka-Stroock \cite{KS87} in the diffusion case. However, in the fractional Brownian setting, there are several difficulties when implementing these steps precisely. Conceptually the main challenge arises from the need of respecting the fractional Brownian scaling and the Cameron-Martin structure in each step in order to obtain sharp estimates. More specifically, for Step 1 we need a new idea to prove the positivity for the density of $U_t^{(l)}$ when the Markov property is not available. For Step 2 we rely on Theorem~\ref{thm:equiv-distances} that we have proven in the Section \ref{s:equiv-distances}, which yields sharp estimates for the density of $X_l(t,x)$. In Step 3,  a new ingredient is needed to prove uniformity for an upper estimate for the density of $X_l(t,x)$ with respect to the degree $l$ of expansion. In the following sections, we develop the above three steps mathematically.

\subsection{Step one: local lower estimate for the signature density of fractional Brownian motion}\label{section: step 1}

We fix $l\geq1$. Recall that the truncated signature process $\Gamma_t$ is defined by \eqref{def: signature of B}. Let $\{{\rm e}_{1},\ldots,{\rm e}_{d}\}$ be
the standard basis of $\mathbb{R}^{d}.$ By viewing this family as vectors
in $\mathfrak{g}^{(l)}\cong T_{\mathbf{1}}G^{(l)}$, we denote the
associated left invariant vector fields on $G^{(l)}$ by $\{\tilde{W}_{1},\ldots,\tilde{W}_{d}\}$. It is standard
(cf. \cite[Remark 7.43]{FV10}) that $\Gamma_{t}$ satisfies
the following intrinsic rough differential equation on $G^{(l)}$:
\begin{equation}\label{eq: SDE for signature}
\begin{cases}
d\Gamma_{t}=\sum_{\alpha=1}^{d}\tilde{W}_{\alpha}(\Gamma_{t})dB_{t}^{\alpha},\\
\Gamma_{0}=\mathbf{1}.
\end{cases}
\end{equation}
Let $U_{t}\triangleq\log\Gamma_{t}\in\mathfrak{g}^{(l)}$ be the truncated
log-signature path, as defined in \eqref{def: log signature of B}. Since $\{\tilde{W}_{1},\ldots,\tilde{W}_{d}\}$ satisfies H\"ormander's
condition by the definition of $\mathfrak{g}^{(l)}$, we know that
$U_{t}$ admits a smooth density with respect to the Lebesgue measure
$du$ on $\mathfrak{g}^{(l)}$. Denote this density by $\rho_{t}(u).$ 

A key ingredient in this step is to show that  the density function $\rho_t$ is everywhere strictly positive.  In the Brownian case, this was proved in \cite{KS87} by using support theorem and the Markov property. In the fractional Brownian setting, the argument breaks down although general support theorems for Gaussian rough paths are still available. It turns out that there is a simple neat proof based on Sard's theorem and a general positivity criterion established Baudoin-Nualart-Ouyang-Tindel \cite{BNOT16}. We mention that Baudoin-Feng-Ouyang \cite{BFO19} also contains a proof of this property using a different approach.

We first recall the classical Sard's theorem, and we refer the reader to \cite{Milnor97} for a beautiful presentation. Let $f:M\rightarrow N$ be a smooth map between two finite dimensional differentiable manifolds $M$ and $N$. A point $x\in M$ is said to be a \textit{critical point} of $f$ if the differential $df_x:T_x M\rightarrow T_{f(x)}N$ is not surjective. A \textit{critical value} of $f$ in $N$ is the image of a critical point in $M$. Also recall that a subset $E\subseteq N$ is a \textit{Lebesgue null set} if its intersection with any coordinate chart has zero Lebesgue measure in the corresponding coordinate space. 

\begin{thm}[Sard's theorem]\label{thm: Sard's theorem}
Let $f:M\rightarrow N$ be a smooth map between two finite dimensional differentiable manifolds. Then the set of critical values of $f$ is a Lebesgue null set in $N$.
\end{thm}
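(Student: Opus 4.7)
The plan is to reduce the theorem to its local Euclidean form and then proceed by induction on the source dimension. First I would cover $M$ by countably many coordinate charts (possible since we may assume $M$ is second countable, or treat each connected component separately) and do the same for $N$; since a countable union of null sets is null, it suffices to prove the statement for a smooth map $f : U \to \mathbb{R}^n$ where $U \subseteq \mathbb{R}^m$ is open. Let $C \subseteq U$ denote the set of critical points, and define the decreasing filtration $C_k \triangleq \{x \in U : D^j f(x) = 0 \text{ for all } 1 \leq j \leq k\}$, so that $C \supseteq C_1 \supseteq C_2 \supseteq \cdots$. The goal is to show $f(C)$ has Lebesgue measure zero, which I would split into three pieces: $f(C \setminus C_1)$, $f(C_k \setminus C_{k+1})$ for each $k \geq 1$, and $f(C_k)$ for $k$ sufficiently large.

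The induction runs on $m$, the base case $m = 0$ being trivial. For the first piece, near any $x \in C \setminus C_1$ some first partial of $f$ is nonzero, so after an application of the inverse function theorem one may straighten coordinates so that $f$ takes the form $(x_1, x') \mapsto (x_1, g(x_1, x'))$. The critical set then fibers over the $x_1$-axis, and applying the inductive hypothesis slice by slice together with Fubini's theorem gives that the image is null. For the second piece, on $C_k \setminus C_{k+1}$ some $(k+1)$-st derivative of $f$ is nonzero but all lower ones vanish, so this set lies locally inside a smooth hypersurface $\{w = 0\}$ on which $f$ restricts to a map from a lower-dimensional manifold; again the induction hypothesis applies.

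The third and hardest piece is the quantitative Taylor estimate. For $x \in C_k$, Taylor's theorem gives $|f(y) - f(x)| \leq C_{f, K} |y-x|^{k+1}$ uniformly on a compact set $K$, so a cube of side $\delta$ centered in $C_k$ is mapped into a set of $n$-dimensional Lebesgue measure at most $C \delta^{n(k+1)}$. Covering $C_k \cap K$ by roughly $\delta_0^m / \delta^m$ such cubes and summing yields a bound $C \delta_0^m \, \delta^{n(k+1) - m}$ on the outer measure of $f(C_k \cap K)$. Choosing $k$ with $n(k+1) > m$ — that is, $k \geq m/n$ — and letting $\delta \to 0$, the bound collapses to zero. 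Combining the three pieces (there are only finitely many values of $k$ to consider once $k$ exceeds $m/n$) and exhausting $U$ by compact subsets completes the proof.

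The main obstacle is the third step: one must do genuine hard analysis on a Taylor remainder, and get the covering count and the image measure to conspire so that the exponent of $\delta$ is positive. The first two steps by contrast are essentially book-keeping once the inductive framework is set up. One conceptual subtlety worth flagging is that the bound in step three is purely dimensional — the threshold $k > m/n - 1$ is what forces the induction to stop after finitely many steps — so the proof genuinely uses finite-dimensionality of both $M$ and $N$, which is precisely why Sard-type theorems fail in infinite dimensions.
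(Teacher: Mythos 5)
The paper does not prove this statement at all: it is quoted as the classical Sard theorem and simply referenced to Milnor's book, whose Pontryagin-style argument is exactly the proof you sketch (local reduction, induction on $m$, the decomposition into $C\setminus C_1$, $C_k\setminus C_{k+1}$, and $C_k$ for $k+1>m/n$, with the Fubini slicing, the restriction to the hypersurface $\{w=0\}$, and the Taylor/cube-counting estimate). Your outline is correct and matches the cited source; a full write-up would only need the routine remark that the relevant images are $\sigma$-compact, hence measurable, so that the Fubini step applies.
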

  
We now prove the positivity result announced above, which will be important for our future considerations.

\begin{lem}\label{lem: positivity of rho}
For each $t>0$, the density $\rho_t$ of the truncated signature path $U_t$ is everywhere strictly positive.
\end{lem}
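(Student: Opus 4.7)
The plan is to combine two ingredients: a general positivity criterion on Wiener space (available from \cite{BNOT16}) and Sard's theorem (Theorem \ref{thm: Sard's theorem}) applied to a finite-dimensional reduction of the signature map. The positivity criterion reads roughly: for a smooth, non-degenerate Wiener functional $F$ the density is strictly positive at $u$ as soon as one exhibits $h\in\bar{\mathcal{H}}$ such that the deterministic skeleton satisfies $F(h)=u$ and the deterministic Malliavin covariance matrix of $F$ at the shift $h$ is non-degenerate. In our case $F=U_t=\log S_l(B)_{0,t}$, the skeleton is the smooth map $\Phi:\bar{\mathcal{H}}\to\mathfrak{g}^{(l)}$ defined by $\Phi(h)\triangleq\log S_l(h)_{0,t}$, and the non-degeneracy requirement is the surjectivity of the Fréchet derivative $d\Phi(h):\bar{\mathcal{H}}\to\mathfrak{g}^{(l)}$.

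Fix $u\in\mathfrak{g}^{(l)}$. I would first invoke Sard in the following finite-dimensional guise. By Remark \ref{connecting by smooth path} any element of $G^{(l)}$ is the signature of a smooth path, so we may select a finite-dimensional subspace $E\subseteq\bar{\mathcal{H}}$ of smooth Cameron-Martin paths on which $\Phi|_E:E\to\mathfrak{g}^{(l)}$ is surjective. Sard's theorem applied to the smooth map $\Phi|_E$ between finite-dimensional manifolds then produces a dense set of regular values in $\mathfrak{g}^{(l)}$. Pick any such regular value $u_0$ with a regular preimage $h_0\in E$: the differential $d(\Phi|_E)(h_0)$, and a fortiori $d\Phi(h_0)$, is surjective onto $\mathfrak{g}^{(l)}$.

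To reach the prescribed target $u$ I would exploit the multiplicative structure of signatures. Set $g'\triangleq\exp(u_0)^{-1}\otimes\exp(u)\in G^{(l)}$ and, invoking Remark \ref{connecting by smooth path} again, choose a smooth path $k$ with $S_l(k)=g'$. After reparametrizing onto $[0,t]$, the concatenation $h\triangleq h_0\sqcup k$ lies in $\bar{\mathcal{H}}$ and satisfies
\[
S_l(h)=S_l(h_0)\otimes S_l(k)=\exp(u_0)\otimes g'=\exp(u),
\]
hence $\Phi(h)=u$. Variations of $h$ supported on the time-interval occupied by $h_0$ propagate to variations of $S_l(h)$ by right multiplication with the fixed group element $S_l(k)$, which is a diffeomorphism of $G^{(l)}$; consequently $d\Phi(h)$ inherits the surjectivity of $d\Phi(h_0)$. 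Applying the \cite{BNOT16} criterion to this shift $h$ delivers $\rho_t(u)>0$, and since $u\in\mathfrak{g}^{(l)}$ was arbitrary the lemma follows.

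The main obstacle I anticipate is making rigorous the preservation of non-degeneracy under concatenation, i.e. justifying that for variations $\delta h$ supported on the first piece the identity $d\Phi(h)\cdot\delta h=L_{S_l(k)}\bigl(d\Phi(h_0)\cdot\delta h\bigr)$ holds modulo the standard trivialization $T_gG^{(l)}\cong\mathfrak{g}^{(l)}$, so that surjectivity transfers from $h_0$ to $h$. This is an elementary but careful computation based on the Chen identity and the fact that right multiplication on $G^{(l)}$ is a linear isomorphism on each tangent space. Once this transfer lemma is in place, the combination of Remark \ref{connecting by smooth path}, Sard's theorem, and the positivity criterion yields strict positivity of $\rho_t$ everywhere on $\mathfrak{g}^{(l)}$.
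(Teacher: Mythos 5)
Your argument is correct and follows essentially the same route as the paper's proof: the positivity criterion of \cite{BNOT16}, Sard's theorem applied to finite-dimensional families of (reparametrized) piecewise-linear/smooth paths to produce one point $u_0$ whose skeleton differential is surjective, and then Chen's identity together with a group translation on $G^{(l)}$ (a diffeomorphism) to transport surjectivity to an arbitrary target $u$ --- the paper prepends the auxiliary path and uses left translation where you append the corrective path and use right translation, which is an immaterial mirror-image difference. The only point needing slightly more care is your claim that a single finite-dimensional subspace $E\subseteq\bar{\mathcal{H}}$ with $\Phi|_E$ surjective exists: Remark \ref{connecting by smooth path} gives a path for each group element but no uniform bound on its complexity, so either argue for a uniform number of linear pieces (e.g.\ by dilation homogeneity) or, as the paper does, work with the whole family of parametrizations $(\mathbb{R}^d)^n\to\mathfrak{g}^{(l)}$, $n\geq 1$, and use that the union of their critical-value sets remains Lebesgue-null while the union of their images is everything.
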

\begin{proof}
We only consider the case when $t=1$. The general case follows from the scaling property (\ref{eq: formula for rho_t}) below. 
Our strategy relies on the fact that $\Gamma_t=\exp(U_t)$ solves equation \eqref{eq: SDE for signature}. In addition, recall our Convention \ref{conv: convention} about the identification of $\mathfrak{g}^{(l)}$ and $G^{(l)}$. Therefore we can get the desired positivity by applying \cite[Theorem 1.4]{BNOT16}. To this aim, recall that the standing assumptions in \cite[Theorem 1.4]{BNOT16} are the following:

\vspace{2mm} 

\noindent (i) The Malliavin covariance matrix of $U_t$ is invertible with inverse in $L^p(\Omega)$ for all $p>1$;\\
(ii) for any $u\in\mathfrak{g}^{(l)}$, there exists $h\in\bch$ such that $\log S_l(h)=u$ and
\begin{align}\label{nondegenerate map}
(d\log S_l)_h: \bch\to \mathfrak{g}^{(l)}\quad \text{is\ surjective,}
\end{align}
where $S_l(h)\triangleq S_l(h)_{0,1}$ is the truncated map.

\vspace{2mm} 

\noindent Notice that item (i) is proved in \cite[Theorem 3.3]{BFO19}. We will thus focus on condition (ii) in the remainder of the proof.

In order to prove relation \eqref{nondegenerate map} in item (ii) above, let us introduce some additional notation. First we shall write $G\triangleq G^{(l)}$ for the sake of simplicity. Then for all $n\geq 1$ we introduce a linear map
$
H_{n}:(\mathbb{R}^{d})^{n}\rightarrow\bar{{\cal H}}
$ in the following way. Given $y=(y_1,\ldots,y_n)$, the function $H_n(y)$ is defined to be the piecewise linear path obtained by concatenating the vectors $y_1,\ldots,y_n$ successively. We also define a set $\bch_0$ of piecewise linear paths by
\[
\bar{{\cal H}}_{0}\triangleq\bigcup_{n=1}^{\infty}H_{n}\lp (\mathbb{R}^{d})^{n}\rp \subseteq\bar{\cal H}.
\]Note that $\bar{\mathcal{H}}_0$ is closed under concatenation, and $S_l(\bar{\cal H}_0)=G$ by the Chow-Rashevskii theorem (cf. Remark \ref{connecting by smooth path}). Now we claim that:

\vspace{2mm}

\noindent (\textbf{P}) For any $g\in G$, there exists $h\in\bar{\cal H}_0$ such that $S_l(h)=g$ and the differential $(dS_l)_h|_{\bar{\cal H}_0}:\bar{\cal H}_0\rightarrow T_g G$ is surjective. 

\vspace{2mm}

The property  (\textbf{P}) is clearly stronger than the original desired claim \eqref{nondegenerate map}. To prove (\textbf{P}), let $\cal P$ be the set of elements in $G$ which satisfy (\textbf{P}). We first show that $\cal P$ is either $\emptyset$ or $G$.  The main idea behind our strategy is that if there exists $g_0\in\cal{P}$, such that $(dS_l)_{h_0}$ is a submersion for some $h_0\in\bch_0$ satisfying $S_l(h_0)=g_0$, then one can obtain every point $g\in G$ by a left translation $L_a$, since $dL_a$ is an isomorphism. To be more precise, suppose that $g_{0}\in G$ is an element satisfying (\textbf{P}). By definition, there exists a path $h_{0}\in\bar{{\cal H}}_{0}$ such
that $S_{l}(h_{0})=g_{0}$ and $(dS_{l})_{h_{0}}|_{\bar{{\cal H}}_{0}}$
is surjective. Now pick a generic element $a\in G$ and choose a path
$\alpha\in\bar{{\cal H}}_{0}$ so that $S_{l}(\alpha)=a$. Then $S_{l}(\alpha\sqcup h_{0})=a\otimes g_{0}.$
We want to show that $(dS_{l})_{\alpha\sqcup h_{0}}:\bar{{\cal H}}_{0}\rightarrow T_{a\otimes g_{0}}G$
is surjective. For this purpose, let $\xi\in T_{a\otimes g_{0}}G$
and set 
\[
\xi_{0}\triangleq dL_{a^{-1}}(\xi)\in T_{g_{0}}G.
\]
By the surjectivity of $(dS_{l})_{h_{0}}|_{\bar{{\cal H}}_{0}},$
there exists $\gamma\in\bar{{\cal H}}_{0}$ such that $(dS_{l})_{h_{0}}(\gamma)=\xi_{0}.$
It follows that, for $\varepsilon>0$ we have 
\[
S_{l}(\alpha\sqcup(h_{0}+\varepsilon\cdot\gamma))=a\otimes S_{l}(h_{0}+\varepsilon\cdot\gamma).
\]
By differentiation with respect to $\varepsilon$ at $\varepsilon=0$, we obtain that
\[
(dS_{l})_{\alpha\sqcup h_{0}}(0\sqcup\gamma)=(dL_{a})_{S_{l}(h_{0})}\circ(dS_{l})_{h_{0}}(\gamma)=(dL_{a})_{g_{0}}(\xi_{0})=\xi.
\]
Therefore, $(dS_{l})_{\alpha\sqcup h_{0}}|_{\bar{{\cal H}}_{0}}$
is surjective. Since $a$ is arbitrary, we conclude that if ${\cal P}$
is non-empty, then ${\cal P}=G$.

To complete the proof, it remains to show that $\cal P\neq\emptyset$. This will be a simple consequence of Sard's theorem. Indeed, for each $n\geq1$,  define
\begin{align}\label{def: fn}
f_{n}\triangleq S_{l}\circ H_{n}:(\mathbb{R}^{d})^{n}\rightarrow G,
\end{align}
where we recall that $H_n(y)$ is the piecewise linear path obtained by concatenating $y_1,..., y_n$.
The map $f_n$ is simply given by \[
f_{n}(y_{1},\ldots,y_{n})=\exp(y_{1})\otimes\cdots\otimes\exp({y_{n}}),
\]
where we recall that the exponential maps is defined by \eqref{eq:def-exp-on-T-l}. It is readily checked that $f_n$ is a smooth map.
According to Sard's theorem (cf. Theorem \ref{thm: Sard's theorem}), the set of critical values of $f_n$, denoted as $E_n$, is a Lebesgue null set in $G$. It follows that $E\triangleq\cup_{n=1}^{\infty}E_{n}$ is also a Lebesgue null set in $G$. We have thus obtained that, \[
G\backslash E=\Big( \bigcup_{n=1}^{\infty}f_{n}((\mathbb{R}^{d})^{n})\Big) \backslash E\neq\emptyset,
\]
where the first equality is due to the fact that $S_l(\bch_0)=G$ by the Chow-Rashevskii theorem. 
Pick any element $g\in G\backslash E$. Then for some $n\geq1$, we have $g\in f_n((\mathbb{R}^d)^n)\backslash E_n$. In particular, there exists $y\in (\mathbb{R}^d)^n$ such that $f_n(y)=g$ and $(df_n)_y$ is surjective.  We claim that $g\in\cal P$ with $h\triangleq H_n(y)\in\bar{\cal H}_0$ being the associated path.  Indeed, it is apparent that $S_l(h)=g$. In addition, let $\xi\in T_g G$ and $w\in (\mathbb{R}^d)^n$ be such that $(df_n)_y(w)=\xi$. The existence of $w$ follows from the surjectivity of $(df_n)_y$. Since $H_n$ is linear, we obtain that \begin{align*}
(dS_{l})_{h}(H_{n}(w)) & =\frac{d}{d\varepsilon}\rrn _{\varepsilon=0}S_{l}(H_{n}(y)+\varepsilon\cdot H_{n}(w))
  =\frac{d}{d\varepsilon}\rrn _{\varepsilon=0}S_{l}(H_{n}(y+\varepsilon\cdot w))\\
 & =\frac{d}{d\varepsilon}\rrn _{\varepsilon=0}f_{n}(y+\varepsilon\cdot w)
  =(df_{n})_{y}(w)
  =\xi.
\end{align*}Therefore, the pair $(h,g)$ satisfies property (\textbf{P}) and thus $\cal P$ is non-empty.
\end{proof}

\begin{rem}
{Theorem 1.4 in \cite{BNOT16} was stated for SDEs in which the vector fields are of class $C_b^\infty$. However, the uniform boundedness assumption was not relevant since the argument was essentially local. To elaborate this, first note that Condition (b) in \cite[Theorem 3.1]{BNOT16}, which is the key ingredient for the proof of \cite[Theorem 1.4]{BNOT16}, was stated under convergence in probability. In addition, Condition (b) and the result of Theorem 3.3 in the same paper were both local since the path $h$ is fixed. Therefore, one can  localize the relevant probabilities in a large compact subset and apply local continuity theorems in rough path theory even though the underlying vector fields are not bounded. In particular, the result applies to our truncated signature process $U_t$.}
\end{rem}
\begin{rem}
When $H>1/2$, it is not clear whether $\bar{\cal H}$ contains the space of piecewise linear paths. It is though true from $\bar{\cal H}=I_{0+}^{H+1/2}(L^2([0,1]))$ that it contains all smooth paths. To fix this issue, one can reparametrize the piecewise linear path $y_1\sqcup\cdots\sqcup y_n$ in an obvious way such that the resulting path is smooth but the trajectory remains unchanged. This does not change the truncated signature as it is invariant under reparametrization. 
\end{rem}

Essentially the same amount of effort allows us to adapt the above argument to establish the general positivity result for hypoelliptic SDEs as stated in Theorem \ref{prop: positivity-intro}, which is of independent interest. This complements the result of \cite[Theorem 1.4]{BNOT16} by affirming that Hypothesis 1.2 in that theorem is always verified under hypoellipticity.  

\begin{proof}[Proof of Theorem \ref{prop: positivity-intro}]
Without loss of generality we only consider $t=1$.  Continuing to denote by $\Phi_t(x;h)$ the skeleton of equation \eqref{eq: hypoelliptic SDE}, defined by \eqref{eq: skeleton ODE}, 
let $F:\bar{{\cal H}}\rightarrow\mathbb{R}^{N}$ be the end point
map defined by $F(h)\triangleq\Phi_{1}(x;h)$.  As in the proof of Lemma \ref{lem: positivity of rho}, the key ingredient that needs to be established is the following property:
for any $y\in\mr^N$ there exists $h\in\bch$ such that 
\begin{align}\label{surjectivity general case}
F(h)=y,\quad\text{and}\ \ \ (dF)_h: \bch\to\mr^N \ \ \textnormal{is surjective.}
\end{align}
Along the same lines as in the proof of Lemma \ref{lem: positivity of rho}, we define ${\cal P}$ to be the set of points $y\in\mr^N$ satisfying \eqref{surjectivity general case} for some $h\in\bar{\cal H}$. We first show that ${\cal P}$ is non-empty which then implies $\mathcal{P}=\mathbb{R}^N$ again by a translation argument.

To show that $\cal P$ is non-empty, we first define $H_{n}:(\mathbb{R}^{d})^{n}\rightarrow\bar{{\cal H}}$
and $\bar{{\cal H}}_{0}\subseteq\bar{{\cal H}}$ in the same way as
in the proof of Lemma \ref{lem: positivity of rho}. Also define a map $F_n$ by
\[
F_{n}\triangleq F\circ H_{n}:(\mathbb{R}^{d})^{n}\rightarrow\mathbb{R}^{N}.
\]
According to Sard's theorem, the set of critical values of $F_{n},$
 again denoted as $E_{n}$, is a Lebesgue null set in $\mathbb{R}^{N},$
and so is $E\triangleq\cup_{n}E_{n}$. 

Next consider a given $q\in\mr^N$. Thanks to the hypoellipticity assumption \eqref{eq:unif-hypo-assumption}, we can equip a neighborhood $U_q$ of $q$ with a sub-Riemannian metric, {by requiring that a certain subset of $\{V_1,...,V_d\}$ is an orthonormal frame near $q$}. Then according to the Chow-Rashevskii theorem (cf. \cite{Montgomery02},
Theorem 2.1.2), every point in $U_{q}$ is reachable from $q$ by
a horizontal path. And if one examines the proof of the theorem in
Section 2.4 of \cite{Montgomery02} carefully, this horizontal path
is controlled by a piecewise linear path in $\mathbb{R}^{d}$, i.e.
$U_{q}\subseteq\cup_{n}\Phi_{1}(q;H_{n}((\mathbb{R}^{d})^{n}))$.
Now for given $y\in\mathbb{R}^{N},$ choose an arbitrary continuous
path $\gamma$ joining $x$ to $y$. By compactness, we can cover
the image of $\gamma$ by finitely many open sets of the form $U_{q_{i}}$
such that $U_{q_{i}}\cap U_{q_{i+1}}\neq\emptyset$ for all $i$ where
$q_{i}\in{\rm Im}(\gamma).$ It follows that $y$ can be reached from
$x$ by a horizontal path controlled by a piecewise linear path in
$\mathbb{R}^{d}.$ In other words, we have $y\in F_{n}((\mathbb{R}^{d})^{n})$
for some $n.$ This establishes the property that $\mathbb{R}^{N}=\cup_{n}F_{n}((\mathbb{R}^{d})^{n})$.

Now the same argument as in the proof of Lemma \ref{lem: positivity of rho} allows us to conclude that 
\[
\mathbb{R}^{N}\backslash E=\bigcup_{n=1}^{\infty}F_{n}((\mathbb{R}^{d})^{n})\backslash E\subseteq\mathcal{P},
\]showing that $\cal P$ is non-empty since $E$ is a Lebesgue null set.

Finally, we show that $\mathcal{P}=\mathbb{R}^N$. To this aim, first note that, for any $h_0,\gamma,\alpha\in\bar{\cal H}$ and $\varepsilon>0$, we have \[
\Phi_{1}(x;(h_{0}+\varepsilon\cdot\gamma)\sqcup\alpha)=\Phi_{1}\lp \Phi_{1}(x;h_{0}+\varepsilon\cdot\gamma);\alpha\rp ,
\]where paths are always assumed to be parametrized on $[0,1]$. Therefore, by differentiating with respect to $\varepsilon $ at $\varepsilon=0$, we obtain that \[
(dF)_{h_{0}\sqcup\alpha}(\gamma\sqcup0)=J_{1}(F(h_{0});\alpha)\circ(dF)_{h_{0}}(\gamma),
\]where recall that $J_t(\cdot;\cdot)$ is the Jacobian of the flow $\Phi_t$. This shows that 
\begin{equation}\label{eq:Surj}
(dF)_{h_{0}\sqcup\alpha}=J_{1}(F(h_{0});\alpha)\circ(dF)_{h_{0}}.
\end{equation}

Now pick any fixed $y_0\in\cal P$ with an associated $h_0\in\bar{\cal H}$ satisfying (\ref{surjectivity general case}). For any $\eta\in\mathbb{R}^N$, choose $\alpha\in\bar{\cal H}$ such that $F(\alpha)=\eta.$ Then $F(h_0\sqcup\alpha)=y+\eta$ and the surjectivity of $(dF)_{h_0\sqcup\alpha}$ follows from (\ref{eq:Surj}), the surjectivity of $(dF)_{h_0}$ and the invertibility of the Jacobian. In particular, $y+\eta\in\cal P$. Since $\eta$ is arbitrary, we conclude that $\mathcal{P}=\mathbb{R}^N$.

\end{proof}

\begin{rem}
A general support theorem for hypoelliptic SDEs allows one to show that the support of the density $p_t(x,y)$ is dense. In the diffusion case, together with the semigroup property
 \[
p(s+t,x,y)=\int_{\mathbb{R}^{N}}p(s,x,z)p(t,z,y)dz
\]one immediately sees that $p(t,x,y)$ is everywhere strictly positive. This argument clearly breaks down in the fractional Brownian setting.
\end{rem}

\begin{rem}
In Theorem \ref{prop: positivity-intro}, although the time horizon is taken to be $[0,1]$, the proof clearly applies to arbitrary time horizon $[0,T]$ without difficulty. As for the study of the control distance function,  relation (\ref{distance rescaling}) also allows us to restrict on the time horizon $[0,1]$.
\end{rem}

Finally, we present the main result in this part which gives a precise local lower estimate for the density $\rho_t(u)$. This will be a consequence of Lemma \ref{lem: positivity of rho} together with the scaling property and left invariance for the process $\Gamma_t$.


\begin{prop}\label{prop: step one}
For each $M>0,$ define $\beta_{M}\triangleq\inf\lcl  \rho_{1}(u):\|u\|_{\textsc{CC}}\leq M\rcl $. Then $\beta_M$ is strictly positive and for all $(u,t)\in\mathfrak{g}^{(l)}\times(0,1]$ with $\|u\|_{\textsc{CC}}\leq Mt^{H},$ 
 we have 
\begin{align}\label{signature density lower bound}
\rho_{t}(u)\geq\beta_{M}t^{-H\nu},
\end{align}
where the constant $\nu$ is given by $\nu\triangleq\sum_{k=1}^{l}k\dim{\cal L}_{k},$ and ${\cal L}_k$ is the space of homogeneous Lie polynomials of degree $k$.
\end{prop}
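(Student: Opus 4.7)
My plan rests on combining the positivity result of Lemma \ref{lem: positivity of rho} with the fractional Brownian scaling transferred to the signature level. The starting observation is that, for any $c>0$, the rescaled process $B^{(c)}_s \triangleq c^{-H} B_{cs}$ is again a fractional Brownian motion with the same law as $B$. Since iterated integrals of order $k$ pick up a factor $c^{Hk}$ under this time rescaling, the truncated signature satisfies $\Gamma_t = \delta_{t^H}(\tilde{\Gamma})$, where $\tilde{\Gamma} \triangleq S_l(B^{(t)})_{0,1}$ has the same distribution as $\Gamma_1$. Taking logarithms and using that $\delta_{t^H}\circ\log=\log\circ\,\delta_{t^H}$ on $G^{(l)}$, this yields the distributional identity
\begin{equation*}
U_t \stackrel{\mathrm{law}}{=} \delta_{t^H}(U_1).
\end{equation*}

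The next step will be to translate this into an explicit scaling formula for the densities. For any Borel set $A\subseteq \mathfrak{g}^{(l)}$, the identity above gives $\mathbb{P}(U_t\in A)=\mathbb{P}(U_1\in \delta_{t^{-H}}(A))$. Performing the change of variables $v=\delta_{t^{-H}}(u)$ and using \eqref{eq:dilation-lebesgue-on-cal-G} with $\lambda=t^{-H}$, which gives Jacobian $t^{-H\nu}$, I obtain
\begin{equation*}
\rho_t(u) = t^{-H\nu}\rho_1\lp \delta_{t^{-H}}(u)\rp ,\qquad u\in\mathfrak{g}^{(l)},\ t\in(0,1].
\end{equation*}
Since the $\textsc{CC}$-norm is homogeneous of degree one under the dilations $\delta_\lambda$, the condition $\|u\|_\textsc{CC}\leq M t^H$ is equivalent to $\|\delta_{t^{-H}}(u)\|_\textsc{CC}\leq M$. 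Consequently, $\rho_1(\delta_{t^{-H}}(u)) \geq \beta_M$ on this domain, and the desired bound \eqref{signature density lower bound} follows at once.

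It remains to verify that $\beta_M>0$, which is where the content of the proposition actually sits. The density $\rho_1$ is smooth (hence continuous) on $\mathfrak{g}^{(l)}$ because the vector fields $\tilde{W}_1,\ldots,\tilde{W}_d$ driving \eqref{eq: SDE for signature} satisfy H\"ormander's condition by the very construction of $\mathfrak{g}^{(l)}$, so that $U_1$ is a non-degenerate Malliavin random variable whose law has a $C^\infty$ density. The set $\{u\in\mathfrak{g}^{(l)}:\|u\|_\textsc{CC}\leq M\}$ is compact: it is closed (by continuity of $\|\cdot\|_\textsc{CC}$, itself a consequence of the ball-box estimate of Proposition \ref{prop: ball-box estimate} comparing it to the continuous $\textsc{HS}$-norm) and bounded in $\mathfrak{g}^{(l)}$ (again via the ball-box inequality). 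By Lemma \ref{lem: positivity of rho}, $\rho_1$ is strictly positive on this compact set, so a continuous positive function attains a strictly positive minimum there, giving $\beta_M>0$.

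The only potentially delicate point is the interplay between $\delta_\lambda$, the log map, and the fact that $U_t$ actually lives in $\mathfrak{g}^{(l)}$: the scaling identity must be derived intrinsically, without relying on a Markovian argument which is unavailable in the fractional Brownian setting. Once the distributional identity $U_t\stackrel{\mathrm{law}}{=}\delta_{t^H}(U_1)$ is in place, everything else is a routine change of variables plus the positivity supplied by Lemma \ref{lem: positivity of rho}. In particular, no additional Cameron-Martin analysis is needed in this step; the hard work has been done in Section \ref{section: step 1} to establish strict positivity of $\rho_1$.
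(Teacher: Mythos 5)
Your argument is correct and follows essentially the same route as the paper: both rest on the scaling identity $U_t\stackrel{\mathrm{law}}{=}\delta_{t^H}(U_1)$ (you obtain it from the dilation homogeneity of the iterated integrals, the paper from fBm scaling plus left invariance of the $\tilde{W}_\alpha$ in \eqref{eq: SDE for signature}, which is the same computation), followed by the change of variables with Jacobian $t^{-H\nu}$ from \eqref{eq:dilation-lebesgue-on-cal-G} and the compactness-plus-positivity argument via Lemma \ref{lem: positivity of rho} to get $\beta_M>0$. No gaps to report.
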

\begin{proof}
First observe that the strict positivity of $\beta_M$ is a direct consequence of Lemma~\ref{lem: positivity of rho} and the compactness of the set $\{u\in\frak{g}^{(l)}; \|u\|_{\textsc{CC}}\leq M\}$. 
Next, using the scaling property of  fractional Brownian motion and the left invariance of the vector fields 
$\tilde{W}_\alpha$ defining the equation \eqref{eq: SDE for signature} for $\Gamma_t$, it is not hard to see that 
$$(\delta_{\lambda}\Gamma_{t})_{0\leq t\leq1}\stackrel{{\rm law}}{=}(\Gamma_{\lambda^{1/H}t})_{0\leq t\leq 1},$$ where recall that $\delta_\lambda$ is the dilation operator on $T^{(l)}$. As a result, if we define $Q_{t}$ to be the law of $U_{t}$ on $\mathfrak{g}^{(l)}$,
then $Q_{s}\circ\delta_{\lambda}^{-1}=Q_{\lambda^{1/H}s}$ for all $s\in[0,1]$.
In particular, by setting $s=1$ and $\lambda=t^{H},$ we obtain that
\begin{equation}\label{eq: scaling property of U_t}
Q_{t}=Q_{1}\circ\delta_{t^{H}}^{-1}.
\end{equation}
It follows that, for any $f\in C_{b}^{\infty}(\mathfrak{g}^{(l)}),$
we have 
\begin{align*}
\int_{\mathfrak{g}^{(l)}}f(u)\rho_{t}(u)du & =\int_{\mathfrak{g}^{(l)}}f(u)Q_{t}(du)
  =\int_{\mathfrak{g}^{(l)}}f(\delta_{t^{H}}u)Q_{1}(du)\\
 & =\int_{\mathfrak{g}^{(l)}}f(\delta_{t^{H}}u)\rho_{1}(u)du
  =\int_{\mathfrak{g}^{(l)}}t^{-H\nu}f(u)\rho_{1}(\delta_{t^{-H}}u)du,
\end{align*}
where the last equality follows from the change of variables $u\leftrightarrow\delta_{t^{-H}}u$
and the fact that $du\circ\delta_{t^{H}}^{-1}=t^{-H\nu}du$ (cf. relation \eqref{eq:dilation-lebesgue-on-cal-G}). Therefore,
we conclude that 
\begin{equation}\label{eq: formula for rho_t}
\rho_{t}(u)=t^{-H\nu}\rho_{1}(\delta^{-1}_{t^{H}}u),\ \ \text{for\ all}\ \ (u,t)\in\mathfrak{g}^{(l)}\times(0,1],
\end{equation}
from which our result \eqref{signature density lower bound} follows.
\end{proof}

\subsection{Step two: local lower estimate for the density of the Taylor approximation process}\label{sec: step two}

Recall from \eqref{eq:def-X-l} that $X_l(t,x)=x+F_l(U_t^{(l)},x)$ is the Taylor approximation process of order $l$ for the actual solution of the SDE (\ref{eq: hypoelliptic SDE}). Due to hypoellipticity, it is natural to expect that $F_l$ is "non-degenerate" in a suitable sense provided  $l\geq l_0$. In addition, a precise local lower estimate for the density of $X_l(t,x)$ should naturally follow from Proposition \ref{prop: step one} in Step One,  combined with such "non-degeneracy" property of $F_l$. Here the main subtlety lies in finding a way of respecting the fractional Brownian scaling and Cameron-Martin structure so that the estimate we obtain on $X_l(t,x)$ is sharp. However, this will be a direct consequence of Theorem \ref{thm:equiv-distances} together with a result for the diffusion case in \cite{KS87}. In this part, we always fix $l\geq l_0$.

We first give the precise meaning of the non-degeneracy of $F_l$. Let $JF_l(u,x):\mathfrak{g}^{(l)}\rightarrow\mathbb{R}^N$ be the Jacobian of $F_l$ with respect to $u$. Since $\mathfrak{g}^{(l)}$ has a canonical Hilbert structure induced from $T^{(l)}(\mathbb R^d)$, we can also consider the adjoint map $JF_l(u,x)^*:\mathbb{R}^N\rightarrow\mathfrak g^{(l)}$.  The non-degeneracy of $JF_l$ is summarized in the following lemma, which was proved in \cite[Lemma 3.13]{KS87}.

\begin{lem}\label{lem: nondegeneracy of F}Let $F_l$ be the approximation map given in Definition \ref{def: Taylor approximation function} and let $JF_l(u,x): \frak{g}^{(l)}\to \mr^N$ be its Jacobian. Then there exists a constant $c>0$ depending only on $l_0$ and the vector fields, such that \[
JF_l(0,x)\cdot JF_l(0,x)^{*}\geq c\cdot\mathrm{Id}_{\mathbb{R}^{N}}
\]for all $l\geq l_0$ and $x\in \mathbb R^N$.
\end{lem}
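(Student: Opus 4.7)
The plan is to compute the Jacobian $JF_{l}(0,x)$ explicitly in the natural basis $\{\mathrm{e}_{[\alpha]}\}$ of $\mathfrak{g}^{(l)}$ and then combine it with the hypoellipticity condition~\eqref{eq:unif-hypo-assumption}. Since $\exp(u) = \mathbf{1} + u + O(\|u\|_{\textsc{HS}}^{2})$ on $T^{(l)}$, a direct differentiation of the definition of $F_{l}(u,x)$ at $u=0$ will yield
\[
JF_{l}(0,x) \cdot v \;=\; \sum_{\alpha \in \mathcal{A}_{1}(l)} v^{\alpha}\, V_{(\alpha)}(x),
\]
where $v^{\alpha}$ denotes the coefficient of $\mathrm{e}_{(\alpha)}$ in the expansion $v = \sum_{\alpha} v^{\alpha}\, \mathrm{e}_{(\alpha)}$ for $v \in \mathfrak{g}^{(l)} \subseteq T^{(l)}$. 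The first key step is to evaluate this formula on $v = \mathrm{e}_{[\alpha]}$ for $\alpha \in \mathcal{A}_{1}(l)$. Expanding $\mathrm{e}_{[\alpha]} = \sum_{\beta} c_{\alpha,\beta}\, \mathrm{e}_{(\beta)}$ via iterated commutators in $T^{(l)}$, I arrive at the crucial identity
\[
JF_{l}(0,x) \cdot \mathrm{e}_{[\alpha]} \;=\; V_{[\alpha]}(x).
\]
The reason this holds is that the same coefficients $c_{\alpha,\beta}$ express the iterated bracket $V_{[\alpha]}$ in terms of the ``derivative along'' vector fields $V_{(\beta)}$, because $\mathrm{e}_{i} \mapsto V_{i}$ extends to a Lie algebra morphism into the vector fields on $\mathbb{R}^{N}$.

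With this identification in hand, the matrix lower bound is extracted by Cauchy--Schwarz. For every $\xi \in \mathbb{R}^{N}$ and $\alpha \in \mathcal{A}_{1}(l_{0}) \subseteq \mathcal{A}_{1}(l)$ I will have
\[
\xi^{*}\, JF_{l}(0,x)\, JF_{l}(0,x)^{*}\, \xi \;=\; \|JF_{l}(0,x)^{*} \xi\|_{\mathfrak{g}^{(l)}}^{2} \;\geq\; \frac{|\langle V_{[\alpha]}(x), \xi\rangle_{\mathbb{R}^{N}}|^{2}}{\|\mathrm{e}_{[\alpha]}\|_{\textsc{HS}}^{2}}.
\]
I would then take the maximum over $\alpha \in \mathcal{A}_{1}(l_{0})$, use the trivial inequality $\max \geq \mathrm{avg}$, and invoke~\eqref{eq:unif-hypo-assumption} to conclude $\xi^{*}\, JF_{l}(0,x)\, JF_{l}(0,x)^{*}\, \xi \geq c\,|\xi|^{2}$ with
\[
c \;=\; \frac{1}{|\mathcal{A}_{1}(l_{0})|\cdot \max_{\alpha \in \mathcal{A}_{1}(l_{0})} \|\mathrm{e}_{[\alpha]}\|_{\textsc{HS}}^{2}} \cdot \inf_{x,\eta} \sum_{\alpha \in \mathcal{A}_{1}(l_{0})} \langle V_{[\alpha]}(x), \eta\rangle^{2} \; > \; 0.
\]
Crucially, only basis vectors indexed by $\mathcal{A}_{1}(l_{0})$ enter the argument, so $c$ depends solely on $l_{0}$ and the vector fields, uniformly in $l \geq l_{0}$ as required.

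The main potential obstacle I anticipate is justifying the identity $JF_{l}(0,x)\, \mathrm{e}_{[\alpha]} = V_{[\alpha]}(x)$, which requires reconciling two a priori different interpretations of iterated products $V_{i_{1}} \cdots V_{i_{k}}$: as higher-order differential operators on functions, versus as vector fields via the ``derivative along'' convention from Definition~\ref{def: Taylor approximation function}. This is essentially the content of Lemma~3.13 in~\cite{KS87}; it can be verified by comparing the action of $V_{[\alpha]}^{\mathrm{op}}$ and $V_{[\alpha]}$ on a coordinate function, using the fact that the first-order part of the composition $V_{i_{1}} \cdots V_{i_{k}}$ applied to $x^{j}$ reproduces exactly the $j$-th component of the vector field $V_{(\alpha)}$. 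Once this reconciliation is in place, the remainder of the argument is elementary linear algebra together with the hypoellipticity bound.
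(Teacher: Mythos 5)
Your proposal is correct and follows essentially the same route as the argument of Kusuoka--Stroock [KS87, Lemma 3.13], which is exactly what the paper invokes (without reproducing the proof): one identifies $JF_l(0,x)\,\mathrm{e}_{[\alpha]}=V_{[\alpha]}(x)$ through the bracket-compatibility of $\mathrm{e}_{(\alpha)}\mapsto V_{(\alpha)}$ — your reconciliation via the action of the operator composition on coordinate functions is the standard way to see it — and then concludes from the uniform H\"ormander condition \eqref{eq:unif-hypo-assumption}. Your explicit Cauchy--Schwarz step, using only the basis elements indexed by $\mathcal{A}_1(l_0)$, correctly makes the uniformity of the constant in $l\geq l_0$ transparent.
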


\noindent An immediate corollary of Lemma \ref{lem: nondegeneracy of F} is the following.

\begin{cor}\label{cor: local submersion}
Given $l\geq l_0,$ there exists $r>0$ depending on $l$ and the vector fields, such that $\det(JF_l(u,x)\cdot JF_l(u,x)^*)$ is uniformly positive on $\{u\in\mathfrak{g}^{(l)}:\|u\|_\mathrm{HS}< r\}\times\mathbb{R}^N$. In particular, the map
\[
\{u\in\mathfrak{g}^{(l)}:\|u\|_{\textsc{HS}}<r\}\rightarrow\mathbb{R}^{N},\qquad u\mapsto x+F_{l}(u,x),
\]is a submersion in the sense of differential geometry. 
\end{cor}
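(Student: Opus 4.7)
The plan is to deduce the corollary from Lemma \ref{lem: nondegeneracy of F} by a straightforward continuity and perturbation argument, exploiting the uniform boundedness (in $x$) of the derivatives of the vector fields $V_{(\alpha)}$. First I would examine the explicit expression for $JF_l(u,x)$ obtained by differentiating the formula in Definition \ref{def: Taylor approximation function}: since $F_l(u,x) = \sum_{\alpha \in \mathcal{A}_1(l)} V_{(\alpha)}(x) (\exp u)^\alpha$, the Jacobian $JF_l(\cdot,x): \mathfrak{g}^{(l)} \to \mathbb{R}^N$ depends smoothly on $u$, with $u$-derivatives controlled uniformly in $x \in \mathbb{R}^N$ thanks to the fact that the vector fields $V_{(\alpha)}$ are $C_b^\infty$. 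In particular, there exists a constant $C_{l,V}$ depending only on $l$ and the vector fields such that
\begin{equation*}
\| JF_l(u,x) - JF_l(0,x) \|_{\mathrm{op}} \leq C_{l,V} \|u\|_{\textsc{HS}}
\end{equation*}
uniformly in $x \in \mathbb{R}^N$, where the operator norm is taken between the Hilbert spaces $\mathfrak{g}^{(l)}$ and $\mathbb{R}^N$.

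Next I would combine this Lipschitz estimate with Lemma \ref{lem: nondegeneracy of F}. Writing
\begin{equation*}
JF_l(u,x) \cdot JF_l(u,x)^* = JF_l(0,x) \cdot JF_l(0,x)^* + E(u,x),
\end{equation*}
the error term $E(u,x)$ satisfies $\|E(u,x)\|_{\mathrm{op}} \leq C'_{l,V} \|u\|_{\textsc{HS}}$ provided $\|u\|_{\textsc{HS}} \leq 1$, again uniformly in $x$. Since Lemma \ref{lem: nondegeneracy of F} ensures that the first term is bounded below by $c \cdot \mathrm{Id}_{\mathbb{R}^N}$, we may choose $r \triangleq \min(1, c/(2C'_{l,V}))$ so that for all $u$ with $\|u\|_{\textsc{HS}} < r$ and all $x \in \mathbb{R}^N$ one has
\begin{equation*}
JF_l(u,x) \cdot JF_l(u,x)^* \geq \frac{c}{2}\cdot \mathrm{Id}_{\mathbb{R}^N}.
\end{equation*}
This immediately yields the uniform lower bound $\det(JF_l(u,x) \cdot JF_l(u,x)^*) \geq (c/2)^N > 0$ on the set $\{\|u\|_{\textsc{HS}} < r\} \times \mathbb{R}^N$.

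Finally, the submersion property is a direct consequence: the non-vanishing of $\det(JF_l(u,x) \cdot JF_l(u,x)^*)$ means precisely that $JF_l(u,x): \mathfrak{g}^{(l)} \to \mathbb{R}^N$ is surjective, so the map $u \mapsto x + F_l(u,x)$ has surjective differential at every $u$ in the prescribed neighborhood. There is no real obstacle in this corollary; the only minor technical care concerns keeping the constants uniform in $x$, which follows from the $C_b^\infty$ assumption on the vector fields. Since this corollary will be used later to construct local preimages of $F_l$ via the implicit function theorem (crucial for the disintegration argument in Step Two of Section \ref{sec: local lower bound}), I would also note that the value of $r$ depends only on $l$ and the vector fields, a property that will matter when combining with the scaling of the fractional Brownian signature.
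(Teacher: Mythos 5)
Your proposal is correct and fills in, by the natural continuity/perturbation argument, exactly what the paper treats as "an immediate corollary" of Lemma \ref{lem: nondegeneracy of F}: the uniform (in $x$) Lipschitz control of $JF_l(u,x)$ in $u$ coming from the $C_b^\infty$ vector fields, followed by perturbing the lower bound at $u=0$. This matches the paper's intended route, so there is nothing further to add.
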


\begin{rem}\label{rem: consistent choice of r}
Note that the map $F_l$ and the constant $r$ in Corollary \ref{cor: local submersion} depend on $l$. For technical reasons, we will assume that $r$ is chosen (still depending on $l$) so that for all $l_0\leq l'\leq l$, the map $JF_{l'}(\pi^{(l')}(u),x)$ has full rank whenever $(u,x)\in\mathfrak{g}^{(l)}\times\mathbb{R}^N$ with $\|u\|_{\mathrm{HS}}<r$, where $\pi^{(l')}:\mathfrak{g}^{(l)}\rightarrow\mathfrak{g}^{(l')}$ is the canonical projection. This property will be used in the proof of Lemma \ref{lem: upper estimate for approximating density} in Step Three below.
\end{rem}

Now let $r$ be the constant given in Remark \ref{rem: consistent choice of r} . It is standard from differential geometry that for each $x\in\mathbb{R}^N$ and $y\in\{x+F_l(u,x):\|u\|_\mathrm{HS}<r\}$, the "bridge space"
\begin{align}\label{Mxy}M_{x,y}\triangleq \{u\in\mathfrak{g}^{(l)}:\|u\|_\mathrm{HS}<r\ \mathrm{and}\ x+F_l(u,x)=y\}\end{align} is a submanifold of $\{u\in\mathfrak{g}^{(l)}:\|u\|_{\mathrm{HS}}<r\}$ with dimension $\dim \mathfrak{g}^{(l)}-N$. In addition, since both of $\mathfrak{g}^{(l)}$ and $\mathbb{R}^N$ are oriented Riemannian manifolds, we know from differential topology that $M_{x,y}$ carries a natural orientation and hence a volume form which we denote as $m_{x,y}$. The following result is the standard disintegration formula in Riemannian geometry (cf. \cite[equation (0.3)]{Bis84}).

\begin{prop}\label{th: disintegration}For any $\varphi\in C_c^\infty(\{u\in\mathfrak{g}^{(l)}:\|u\|_\mathrm{HS}<r\})$, we have 
\begin{equation}\label{eq: disintegration}
\int_{\mathfrak{g}^{(l)}}\varphi(u)du
=
\int_{\mathbb{R}^{N}}dy\int_{M_{x,y}}K(v,x)\varphi(v) \, m_{x,y}(dv),
\end{equation}where the kernel $K$ is given by
\begin{align}\label{kernel in disintegration}
K(v,x)\triangleq\lp \det(JF_{l}(v,x)\cdot JF_{l}(v,x)^{*})\rp ^{-\frac{1}{2}},
\end{align}
and we define $m_{x,y}\triangleq0$ if $M_{x,y}=\emptyset.$
\end{prop}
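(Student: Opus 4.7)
The identity in Proposition~\ref{th: disintegration} is a direct instance of the smooth coarea formula applied to the map
\[
\Psi:U\longrightarrow\mathbb{R}^{N},\qquad \Psi(u)\triangleq x+F_{l}(u,x),\qquad U\triangleq\{u\in\mathfrak{g}^{(l)}:\|u\|_{\textsc{HS}}<r\}.
\]
My plan is therefore to reduce the statement to the Riemannian coarea formula and verify the three inputs it requires: $\Psi$ is a submersion, the level sets carry the stated volume forms, and the Jacobian factor of the coarea formula is precisely $K(v,x)^{-1}$.

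First I would record the geometric setup. By Corollary~\ref{cor: local submersion} together with Remark~\ref{rem: consistent choice of r}, the map $\Psi$ is a smooth submersion on $U$; hence for each $y$ in the image of $\Psi$ the fiber $M_{x,y}=\Psi^{-1}(y)$ is an embedded submanifold of $U$ of dimension $\dim\mathfrak{g}^{(l)}-N$. The canonical Hilbert structure of $\mathfrak{g}^{(l)}$ induced from $T^{(l)}$ (see Notation~\ref{not:free-algebra}) restricts to a Riemannian metric on each $M_{x,y}$, which in turn determines the induced volume form $m_{x,y}$ appearing in the statement.

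Next I would identify the coarea Jacobian. Under the canonical identification $T_{v}\mathfrak{g}^{(l)}\cong\mathfrak{g}^{(l)}$, the differential of $\Psi$ at $v$ is exactly the map $JF_{l}(v,x):\mathfrak{g}^{(l)}\to\mathbb{R}^{N}$. Hence the classical coarea Jacobian $J_{\Psi}(v)$ of the submersion equals
\[
J_{\Psi}(v)=\sqrt{\det\bigl(JF_{l}(v,x)\,JF_{l}(v,x)^{*}\bigr)}=K(v,x)^{-1}.
\]
The smooth coarea formula for submersions between Riemannian manifolds (see, e.g., Chavel's \emph{Riemannian Geometry}, or the computation of Bismut cited after the statement) then asserts that for any nonnegative Borel function $f$ on $U$,
\[
\int_{U}f(u)\,J_{\Psi}(u)\,du=\int_{\mathbb{R}^{N}}\Bigl(\int_{\Psi^{-1}(y)}f(v)\,m_{x,y}(dv)\Bigr)\,dy,
\]
with the inner integral declared zero whenever the fiber is empty.

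Finally I would apply this identity to $f(v)\triangleq K(v,x)\varphi(v)$. Since $\varphi$ is compactly supported in $U$, so is $f$, and the factor $K\cdot J_{\Psi}\equiv 1$ on $U$ collapses the left-hand side to $\int_{\mathfrak{g}^{(l)}}\varphi(u)\,du$, yielding precisely relation~\eqref{eq: disintegration}. The only nonroutine step is the invocation of the coarea formula itself; if one wished to avoid citing it, one could instead proceed by a partition-of-unity argument, straightening $\Psi$ to a linear projection via the submersion normal form theorem on small coordinate patches, applying Fubini in those coordinates, and tracking the Jacobian factor between the Euclidean volume in the straightened chart and the induced volume form $m_{x,y}$ on the fibers. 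I expect this last geometric bookkeeping to be the main (though routine) technical point, and I would prefer to bypass it by simply quoting the coarea formula as a classical fact.
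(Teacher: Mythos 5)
Your proof is correct and matches the paper's treatment: the paper simply quotes this as the standard disintegration (coarea) formula in Riemannian geometry, citing Bismut, after the submersion property of $u\mapsto x+F_l(u,x)$ is secured by Corollary \ref{cor: local submersion}. Your identification of the coarea Jacobian with $K(v,x)^{-1}$ and the substitution $f=K\varphi$ is exactly the intended reduction, and the coordinate/partition-of-unity argument you mention as a fallback is the standard direct proof of that classical fact.
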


\noindent The disintegration formula (\ref{eq: disintegration}) immediately leads to a formula for the (localized) density of the Taylor approximation process $X_l(t,x)$.

\begin{prop}\label{prop: representation of density} Let $\eta\in C_c^\infty(\{u\in\mathfrak{g}^{(l)}:\|u\|_\mathrm{HS}<r\})$ be a bump function so that $0\leq\eta\leq1$ and $\eta=1$ when $\|u\|_\mathrm{HS}<r/2$, where $r$ is the constant featuring in Proposition~\ref{th: disintegration}. Define $\mathbb{P}^\eta_l(t,x,\cdot)$ to be the measure \[
\mathbb{P}_{l}^{\eta}(t,x, A)\triangleq\mathbb{E}\lc \eta(U_{t}){\bf 1}_{\{X_{l}(t,x)\in A\}}\rc  ,\ \ \ A\in{\cal B}(\mathbb{R}^{N}),
\]where $U_t=\log \Gamma_t$, $\Gamma_t$ is defined by \eqref{eq: SDE for signature} and $X_{l}(t,x)=x+F_{l}(U_{t},x)$ is the approximation given by \eqref{eq:def-rough-eq}.
Then the measure $\mathbb{P}_{l}^{\eta}(t,x,\cdot)$ is absolutely continuous with
respect to the Lebesgue measure, and its density is given by 
\begin{equation}\label{eq: formula for p^eta_l}
p_{l}^{\eta}(t,x,y)\triangleq\int_{M_{x,y}}\eta(u)K(u,x)\rho_{t}(u)m_{x,y}(du).
\end{equation}
where $\rho_t$ is the density of $U_t$ and $K$ is given by \eqref{kernel in disintegration}.
\end{prop}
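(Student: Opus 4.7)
The plan is to rewrite the measure $\mathbb{P}^\eta_l(t,x,\cdot)$ as an integral against the density $\rho_t$ of $U_t$ on $\mathfrak{g}^{(l)}$, and then apply the disintegration formula of Proposition \ref{th: disintegration} to expose the density with respect to Lebesgue measure on $\mathbb{R}^N$. All the work is essentially packaged in Proposition \ref{th: disintegration}; the remaining task is to apply it correctly with the right test function.

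First, for any Borel set $A\in{\cal B}(\mathbb{R}^N)$, by definition of $X_l(t,x)$ and the fact that $U_t$ admits $\rho_t$ as a density on $\mathfrak{g}^{(l)}$, we have
\begin{equation*}
\mathbb{P}^\eta_l(t,x,A)
=\mathbb{E}\lc \eta(U_t)\mathbf{1}_{A}(x+F_l(U_t,x))\rc
=\int_{\mathfrak{g}^{(l)}}\eta(u)\mathbf{1}_{A}(x+F_l(u,x))\,\rho_t(u)\,du.
\end{equation*}
Note that the support condition on $\eta$ ensures that the integrand is supported in $\{u:\|u\|_{\textsc{HS}}<r\}$, which is precisely the region where Proposition \ref{th: disintegration} applies.

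Next, I would like to apply Proposition \ref{th: disintegration} with the test function $\varphi(u)\triangleq\eta(u)\mathbf{1}_{A}(x+F_l(u,x))\rho_t(u)/K(u,x)$. Strictly speaking $\varphi$ is not smooth and $A$ is only Borel, so one should first approximate $\mathbf{1}_A$ by a sequence of smooth compactly supported $\chi_n$ and pass to the limit by dominated convergence; since $\rho_t$ is smooth and $K$ is smooth and bounded below on the support of $\eta$ (by Corollary \ref{cor: local submersion}), this poses no difficulty. Applying \eqref{eq: disintegration}, we obtain
\begin{equation*}
\mathbb{P}^\eta_l(t,x,A)
=\int_{\mathbb{R}^N}dy\int_{M_{x,y}}K(v,x)\,\eta(v)\mathbf{1}_{A}(x+F_l(v,x))\,\frac{\rho_t(v)}{K(v,x)}\cdot K(v,x)\, m_{x,y}(dv).
\end{equation*}
Since by the very definition \eqref{Mxy} of $M_{x,y}$ every $v\in M_{x,y}$ satisfies $x+F_l(v,x)=y$, the indicator $\mathbf{1}_A(x+F_l(v,x))$ equals $\mathbf{1}_A(y)$ and factors out of the inner integral. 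After the trivial cancellation of the $K$-factors (one can also choose $\varphi=\eta\mathbf{1}_A(x+F_l(\cdot,x))\rho_t$ directly and then the kernel $K$ appears exactly once, which is the cleaner route), we arrive at
\begin{equation*}
\mathbb{P}^\eta_l(t,x,A)
=\int_{A}\lp \int_{M_{x,y}}\eta(u)K(u,x)\rho_t(u)\, m_{x,y}(du)\rp dy.
\end{equation*}
This identifies the density $p^\eta_l(t,x,y)$ with the expression~\eqref{eq: formula for p^eta_l} and finishes the proof.

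The main obstacle, which is essentially book-keeping rather than a real difficulty, is to make sure that the disintegration formula is being applied with the correct choice of test function so that exactly one factor of $K(v,x)$ survives. The measurability/approximation issue for indicator functions of Borel sets $A$ is standard and handled by the smoothness of $\rho_t$ and the uniform positivity of $\det(JF_l\cdot JF_l^*)$ on the support of $\eta$ guaranteed by Corollary \ref{cor: local submersion}.
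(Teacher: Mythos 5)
Your proposal is correct and follows essentially the same route as the paper: rewrite $\mathbb{P}^\eta_l(t,x,A)=\int_{\mathfrak{g}^{(l)}}\eta(u)\mathbf{1}_{A}(x+F_l(u,x))\rho_t(u)\,du$ using the density of $U_t$ and then apply the disintegration formula of Proposition \ref{th: disintegration} (with a standard smoothing of $\mathbf{1}_A$), exactly as in the paper's short argument. The only remark is that your initial choice $\varphi=\eta\,\mathbf{1}_A(x+F_l(\cdot,x))\,\rho_t/K$ is inconsistent with the displayed identity (its left-hand side is not $\mathbb{P}^\eta_l(t,x,A)$); the parenthetical ``cleaner route'' $\varphi=\eta\,\mathbf{1}_A(x+F_l(\cdot,x))\,\rho_t$, which produces exactly one factor of $K$, is the correct application and is what the paper does.
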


To obtain a sharp lower estimate on $p_{l}^{\eta}(t,x,y)$ from 
formula \eqref{eq: formula for p^eta_l} and the lower estimate of $\rho_t(u)$ given by \eqref{signature density lower bound}, one needs to estimate the volume form $m_{x,y}$ precisely. For this purpose, we resort to a change of variables 
$S_{x,y}$ from the "bridge space" $M_{x,y}$ to the "loop space" $M_{x,x}$  introduced in \cite{KS87}. 
The construction of this map $S_{x,y}$ is based on the simple idea that, in order to transform an arbitrary loop $\alpha$ from $x$ to $x$ into a path from $x$ to $y$, we just concatenate the loop $\alpha$ to a fixed path from $x$ to $y$. However, this idea does not project to a map from $M_{x,y}$ to $M_{x,x}$ in a simple way, and one needs to use the function $\Psi_l$ defined in Lemma~\ref{lem: the Psi function} to make it work at the level of Taylor approximation. We summarize the technical formulation of this map $S_{x,y}$ in the following lemma, which was proved by \cite{KS87} in the diffusion case (i.e. $H=1/2$). Thanks to our Theorem \ref{thm:equiv-distances}, the lemma (in particular the estimate (\ref{eq: estimating S_xy}) below) holds in exactly the same way in the fractional Brownian context for $H>1/4$.

\begin{lem}\label{lem: construction of S_xy}
Recall that the function $\Psi_{l}$ is defined in Lemma~\ref{lem: the Psi function}. We define the operation $\times$ to be the multiplication induced from $G^{(l)}$ through the exponential map, namely
\begin{equation}\label{multiplication on Lie algebra}
v\times u\triangleq\log( \exp(v)\otimes \exp(u)),\ \ \ v,u\in\mathfrak{g}^{(l)}.
\end{equation}
Then the following statements hold true:

\vspace{2mm}\noindent
(i) There exist $\varepsilon,\rho_{1}>0$ and $\rho_{2}\in(0,r),$
such that for any given $x\in\mathbb{R}^{N}$ and $h\in\bar{{\cal H}}$
with $\|h\|_{\bar{{\cal H}}}<\rho_{1}$, the map 
\[
v\mapsto\tilde{\Psi}_{x,h}(v)\triangleq\Psi_{l}(v\times u,x,y-x-F_{l}(v\times u,x)),
\]
where $y\triangleq\Phi_{1}(x;h)$ and $u\triangleq\log S_{l}(h)$,
defines a diffeomorphism from an open neighbourhood $V_{x,h}\subseteq\mathfrak{g}^{(l)}$
of $0$ containing the ball $\{v\in\mathfrak{g}^{(l)}:\|v\|_{{\rm HS}}<\varepsilon\}$
onto $W\triangleq\{w\in\mathfrak{g}^{(l)}:\|w\|_{{\rm HS}}<\rho_{2}\}$,
such that 
\[
v\in V_{x,h}\cap M_{x,x}\iff w\triangleq\tilde{\Psi}_{x,h}(v)\in W\cap M_{x,y}.
\]
(ii) Given $x,y\in\mathbb{R}^{N}$ with $d(x,y)<\rho_{1}/2$, we choose
$h\in\Pi_{x,y}$ satisfying 
\[
d(x,y)\leq\|h\|_{\bar{{\cal H}}}\leq2d(x,y)<\rho_{1}
\]
and define 
\begin{align}\label{def of Sxy}
S_{x,y}\triangleq\tilde{\Psi}_{x,h}^{-1}\rrn _{W\cap M_{x,y}}:W\cap M_{x,y}\rightarrow V_{x,h}\cap M_{x,x}.
\end{align}
Then there exists a constant $\Lambda>0,$ such that
\begin{align}\label{compare vol mxx and mxy}
\frac{1}{\Lambda}\cdot m_{x,x}(\cdot)\leq m_{x,y}\circ S_{x,y}^{-1}(\cdot)\leq\Lambda\cdot m_{x,x}(\cdot)\ \ \ {\rm on}\ V_{x,h}\cap M_{x,x},
\end{align}
and 
\begin{align}\label{eq: estimating S_xy}
\frac{1}{\Lambda}\cdot(\|v\|_{{\rm CC}}+d(x,y))\leq\|S_{x,y}^{-1}(v)\|_{{\rm CC}}\leq\Lambda\cdot(\|v\|_{{\rm CC}}+d(x,y))
\end{align}
for any $v\in V_{x,h}\cap M_{x,x}.$
\end{lem}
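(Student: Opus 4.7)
The plan is to follow the Kusuoka--Stroock construction from \cite{KS87}, upgrading their Brownian arguments to the fractional setting via Theorem~\ref{thm:equiv-distances} and the control-distance estimates developed in Section~\ref{sec: control distance}. I would proceed in three stages.

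\textbf{Stage 1 (well-posedness and the diffeomorphism property).} To confirm that $\tilde{\Psi}_{x,h}$ is well defined I would check that both $\|v\times u\|_{\textsc{HS}}$ and $|y-x-F_{l}(v\times u,x)|$ lie in the domain of $\Psi_{l}$ from Lemma~\ref{lem: the Psi function}. For the first quantity, Baker--Campbell--Hausdorff together with the bound $\|u\|_{\textsc{CC}}\leq C_{H,l_{0}}\|h\|_{\bar{\mathcal{H}}}$ from \eqref{bound cc norm of u by H bar norm of h} and the ball--box estimate of Proposition~\ref{prop: ball-box estimate} makes it small once $\rho_{1},\varepsilon$ are chosen small. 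For the second, the rough path Taylor remainder \cite[Corollary 10.15]{FV10} gives $|\Phi_{1}(x;h)-x-F_{l}(u,x)|\lesssim \|h\|_{\bar{\mathcal{H}}}^{\bar l_{0}}$ with $\bar l_{0}>1$, and a similar higher-order bound controls the perturbation by $v$. The forward inclusion $v\in M_{x,x}\Longrightarrow \tilde{\Psi}_{x,h}(v)\in M_{x,y}$ is immediate from property (iii) of $\Psi_{l}$:
\[
F_{l}\bigl(\tilde{\Psi}_{x,h}(v),x\bigr)=F_{l}(v\times u,x)+\bigl(y-x-F_{l}(v\times u,x)\bigr)=y-x.
\]
To obtain the diffeomorphism I would differentiate at $v=0$: using property (i) of $\Psi_{l}$ and the fact that $d(v\mapsto v\times u)|_{v=0}$ is right translation by $u$ (an isomorphism of $\mathfrak{g}^{(l)}$), combined with the smallness of the third argument of $\Psi_{l}$ when $\rho_{1}$ is small, the inverse function theorem yields a local diffeomorphism onto some $W$, which may be taken uniform in $(x,h)$ after a further shrinking of constants.

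\textbf{Stage 2 (volume comparison).} The estimate \eqref{compare vol mxx and mxy} follows from computing the Jacobian determinant of $\tilde{\Psi}_{x,h}$ restricted to $T M_{x,x}$. The leading-order behavior of $\tilde{\Psi}_{x,h}$ is right-translation by $u$ on $\mathfrak{g}^{(l)}$, which is volume-preserving with respect to Lebesgue measure (cf.~\eqref{eq:dilation-lebesgue-on-cal-G}) and sends $TM_{x,x}$ bijectively onto $TM_{x,y}$ up to lower-order corrections; the $\Psi_{l}$ correction contributes a bounded multiplicative factor thanks to the $C_{b}^{\infty}$-regularity of $\Psi_{l}$ asserted in Lemma~\ref{lem: the Psi function}, yielding the uniform constant $\Lambda$.

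\textbf{Stage 3 (CC-norm estimate, the main obstacle).} For \eqref{eq: estimating S_xy}, the upper bound uses the definition \eqref{def of Sxy} and sub-additivity of $\|\cdot\|_{\textsc{CC}}$ under the group product $\times$ from \eqref{multiplication on Lie algebra} to write
\[
\|S_{x,y}^{-1}(v)\|_{\textsc{CC}}\leq \|v\times u\|_{\textsc{CC}}+\textnormal{(correction)}\leq \|v\|_{\textsc{CC}}+\|u\|_{\textsc{CC}}+\textnormal{(correction)},
\]
where the correction is of higher order and absorbed via the ball--box estimate, and $\|u\|_{\textsc{CC}}\leq C\,d(x,y)$ by \eqref{bound cc norm of u by H bar norm of h} together with the choice of $h$ in part (ii). The lower bound is the delicate half: sub-additivity yields $\|v\|_{\textsc{CC}}\leq \|S_{x,y}^{-1}(v)\|_{\textsc{CC}}+\|u\|_{\textsc{CC}}+\textnormal{(correction)}$, while the complementary bound $d(x,y)\leq \Lambda\bigl(\|S_{x,y}^{-1}(v)\|_{\textsc{CC}}+\|v\|_{\textsc{CC}}\bigr)$ requires expressing $u$ in terms of $S_{x,y}^{-1}(v)$ and $v$ and invoking Lemma~\ref{lem: d respect cc} to convert CC-norms back into the control distance. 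The crucial point that breaks the naive Brownian argument in the fractional setting is precisely the local Lipschitz equivalence between $d_{H}$ and the $H$-independent canonical distance $g$ from \eqref{eq:CanDis}, supplied by Theorem~\ref{thm:equiv-distances}; without this equivalence the CC-structure on $\mathfrak{g}^{(l)}$ cannot be compared with the Cameron--Martin-based distance $d(x,y)$ up to constants independent of $H$, and the estimate \eqref{eq: estimating S_xy} would fail to close.
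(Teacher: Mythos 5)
Your route is the same one the paper takes: the paper does not reprove this lemma in detail, but imports the Kusuoka--Stroock construction \cite{KS87} and observes that the distance estimates of Section \ref{sec: control distance} (Lemma \ref{lem: d respect cc}, the bound \eqref{bound cc norm of u by H bar norm of h}, Theorem \ref{thm:equiv-distances}) make that argument go through for $H>1/4$, deferring details to \cite{GOT19}. Your Stages 1--3 reconstruct exactly that argument, so there is no divergence of method; however, two points in your write-up need repair.

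First, in Stage 1 your computation $F_l(\tilde{\Psi}_{x,h}(v),x)=y-x$ holds for \emph{every} $v$ in the domain, not only for $v\in M_{x,x}$; taken literally this is incompatible with $\tilde{\Psi}_{x,h}$ being a diffeomorphism onto the full ball $W$, since the image would then be contained in the codimension-$N$ set $\{w:\ x+F_l(w,x)=y\}$. The construction actually used in \cite{KS87} and in \cite{GOT19} takes the third argument to be $y-x-F_{l}(v\times u,x)+F_{l}(v,x)$, for which property (iii) of Lemma \ref{lem: the Psi function} gives $F_l(\tilde{\Psi}_{x,h}(v),x)=y-x+F_l(v,x)$, so that $\tilde{\Psi}_{x,h}(v)\in M_{x,y}$ precisely when $F_l(v,x)=0$, i.e. when $v\in M_{x,x}$; your inverse-function-theorem argument should be run with that definition, and you still owe the converse direction of the stated equivalence, which is immediate only in this corrected form. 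Second, in Stage 3 the lower bound of \eqref{eq: estimating S_xy} should not be routed through $u$: since $w\triangleq S_{x,y}^{-1}(v)$ lies in $M_{x,y}$, one has $x+F_l(w,x)=y$ exactly, so Lemma \ref{lem: d respect cc} applies directly to $w$ and yields $d(x,y)\leq C\|w\|_{\textsc{CC}}$; combined with the subadditivity bound $\|v\|_{\textsc{CC}}\leq\|w\|_{\textsc{CC}}+\|u\|_{\textsc{CC}}+\text{(correction)}$ and $\|u\|_{\textsc{CC}}\leq C\|h\|_{\bar{\mathcal H}}\leq 2C\,d(x,y)$, this closes the estimate. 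Your proposed detour of ``expressing $u$ in terms of $S_{x,y}^{-1}(v)$ and $v$'' is riskier, because $u$ joins $x$ to $y$ only up to a Taylor remainder and $d$ has no triangle inequality, so converting that error back into the control distance is not straightforward; the direct application to $w$ makes the detour unnecessary. As for Stage 2, the conclusion is right, but the relevant fact is a uniform two-sided bound on the Jacobian of the restricted map coming from the $C_b^\infty$ bounds in Lemma \ref{lem: the Psi function} (together with the uniform control on $u$), not translation invariance of Lebesgue measure on $\mathfrak{g}^{(l)}$, since $m_{x,x}$ and $m_{x,y}$ are induced volumes on submanifolds.
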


Now we apply the change of variables involving $S_{x,y}$ in Lemma \ref{lem: construction of S_xy}  to establish a lower estimate of the density $p_l^\eta(t,x,y)$ in terms of the measure $m_{x,x}$ that does not depend on $y$.

\begin{lem}\label{lem: lower estimate in terms of m_xx}
\label{lem: lower estimate in terms of m_xx}Let $p_l^\eta(t,x,y)$ be the density defined by \eqref{eq: formula for p^eta_l}, and recall that the exponent $\nu$ is defined by $\nu\triangleq\sum_{k=1}^{l}k\dim{\cal L}_{k}.$ Then there exist constants
$C,\tau>0$, such that for all $x,y,t$ with $d(x,y)\leq t^{H}$
and $0<t<\tau$, we have 
\begin{equation}
p_{l}^{\eta}(t,x,y)\geq Ct^{-H\nu}m_{xx}\lp \{v\in M_{x,x}:\|v\|_{\textsc{CC}}\leq t^{H}\}\rp ,\label{eq: lower estimate in terms of m_xx}
\end{equation}
where $m_{x,x,}$ is the volume form on $M_{x,x}$ given by \eqref{Mxy}
\end{lem}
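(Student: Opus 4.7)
The plan is to start from the integral representation \eqref{eq: formula for p^eta_l} and transport the integral from $M_{x,y}$ to the loop space $M_{x,x}$ via the diffeomorphism $S_{x,y}$ of Lemma \ref{lem: construction of S_xy}. Specifically, I would first fix $h\in\Pi_{x,y}$ satisfying $d(x,y)\leq\|h\|_{\bch}\leq 2d(x,y)\leq 2t^H$, which is admissible provided $\tau$ is chosen so that $2\tau^H<\rho_1$, so that part (ii) of Lemma \ref{lem: construction of S_xy} applies. Changing variables $u=S_{x,y}^{-1}(v)$ in \eqref{eq: formula for p^eta_l} and using the lower bound on the Jacobian of this change of variables given by the left-hand inequality in \eqref{compare vol mxx and mxy}, I obtain
\[
p_l^\eta(t,x,y)\geq \frac{1}{\Lambda}\int_{V_{x,h}\cap M_{x,x}}\eta(S_{x,y}^{-1}(v))\,K(S_{x,y}^{-1}(v),x)\,\rho_t(S_{x,y}^{-1}(v))\,m_{x,x}(dv).
\]

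Next, I would bound the three factors in the integrand uniformly from below on the smaller set $\Sigma_t\triangleq\{v\in M_{x,x}:\|v\|_{\textsc{CC}}\leq t^H\}$. For $v\in\Sigma_t$, the two-sided estimate \eqref{eq: estimating S_xy} and $d(x,y)\leq t^H$ give $\|S_{x,y}^{-1}(v)\|_{\textsc{CC}}\leq 2\Lambda t^H$. Shrinking $\tau$ further if necessary and applying the ball-box estimate of Proposition \ref{prop: ball-box estimate}, one deduces $\|S_{x,y}^{-1}(v)\|_{\textsc{HS}}<r/2$, which guarantees both that $\Sigma_t\subset V_{x,h}\cap M_{x,x}$ (so the restriction of the integration domain is legitimate) and that $\eta(S_{x,y}^{-1}(v))=1$ by construction of the bump function. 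Moreover, the kernel $K(\cdot,x)$ in \eqref{kernel in disintegration} is uniformly bounded below on $\{\|u\|_{\textsc{HS}}<r\}\times\mr^N$ by a positive constant, thanks to Corollary \ref{cor: local submersion} together with continuity and the uniform hypoellipticity assumption. Finally, for the signature density factor, the bound $\|S_{x,y}^{-1}(v)\|_{\textsc{CC}}\leq 2\Lambda t^H$ allows us to apply Proposition \ref{prop: step one} with $M=2\Lambda$ to conclude that
\[
\rho_t(S_{x,y}^{-1}(v))\geq \beta_{2\Lambda}\, t^{-H\nu}
\]
uniformly for $v\in\Sigma_t$ and $t\in(0,\tau]$.

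Combining the three uniform lower bounds yields
\[
p_l^\eta(t,x,y)\geq C\, t^{-H\nu}\, m_{x,x}(\Sigma_t)= C\, t^{-H\nu}\, m_{x,x}\bigl(\{v\in M_{x,x}:\|v\|_{\textsc{CC}}\leq t^H\}\bigr),
\]
which is exactly \eqref{eq: lower estimate in terms of m_xx}. The main obstacle, in my view, is purely book-keeping: one has to juggle simultaneously the parameters $\rho_1,\rho_2,\varepsilon,r,\kappa$ coming from Lemmas \ref{lem: the Psi function} and \ref{lem: construction of S_xy}, the constants $\Lambda$ from \eqref{compare vol mxx and mxy}--\eqref{eq: estimating S_xy}, and the localization constants from the ball-box estimate, and then choose $\tau$ small enough so that every compatibility requirement (in particular $\Sigma_t\subset V_{x,h}$, $\|S_{x,y}^{-1}(v)\|_{\textsc{HS}}<r/2$, and $2t^H<\rho_1$) is simultaneously met. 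The genuinely substantive inputs--the uniform positivity of $\rho_t$ at scale $t^{H\nu}$, the volume comparison between $m_{x,x}$ and $m_{x,y}$, and the Carnot--Carath\'eodory estimate on $S_{x,y}^{-1}$--are already established, so the proof amounts to assembling them in the correct order.
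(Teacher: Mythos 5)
Your proof is correct and follows essentially the same route as the paper's: restrict \eqref{eq: formula for p^eta_l} to $W\cap M_{x,y}$, transport the integral to $M_{x,x}$ via $S_{x,y}$ using the left inequality in \eqref{compare vol mxx and mxy}, bound $K$ and $\eta$ below by constants, and then apply Proposition \ref{prop: step one} after controlling $\|S_{x,y}^{-1}(v)\|_{\textsc{CC}}\leq\Lambda(\|v\|_{\textsc{CC}}+d(x,y))\leq 2\Lambda t^{H}$ via \eqref{eq: estimating S_xy}. The only point to fix is an ordering issue in your justification of the domain inclusion: you cannot deduce that your set $\Sigma_t=\{v\in M_{x,x}:\|v\|_{\textsc{CC}}\leq t^{H}\}$ lies in $V_{x,h}\cap M_{x,x}$ from a bound on $\|S_{x,y}^{-1}(v)\|_{\textsc{HS}}$, because \eqref{eq: estimating S_xy} is only stated for $v$ already belonging to $V_{x,h}\cap M_{x,x}$; instead, as in the paper, choose $\tau$ so that $\|v\|_{\textsc{CC}}\leq t^{H}$ forces, by the ball-box estimate of Proposition \ref{prop: ball-box estimate} applied to $v$ itself, $\|v\|_{\textsc{HS}}<\varepsilon$ and hence $v\in V_{x,h}$, and only afterwards invoke \eqref{eq: estimating S_xy} (together with $\eta(S_{x,y}^{-1}(v))=1$ on this region) and Proposition \ref{prop: step one}.
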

\begin{proof} Lemma \ref{lem: construction of S_xy} asserts that there exists $\rho_1>0$, such that if $d(x,y)<\rho_1/2$, then $S_{x,y}$ given by \eqref{def of Sxy} defines a change of variables (i.e. a diffeomorphism) for (\ref{eq: formula for p^eta_l}). Specifically we have
\begin{align*}
p_{l}^{\eta}(t,x,y) & \geq\int_{M_{x,y}\cap W}\eta(u)K(u,x)\rho_{t}(u)m_{x,y}(du)\\
 & =\int_{M_{x,x}\cap V_{x,h}}\eta(S_{x,y}^{-1}v)K(S_{x,y}^{-1}v,x)\rho_{t}(S_{x,y}^{-1}v)
 \, m_{x,y}\circ S_{x,y}^{-1}(dv).
 \end{align*}
In addition, since $V_{x,h}$ contains the ball $\{v\in\mathfrak{g}^{(l)}:\|v\|_{\mathrm{HS}}<\varepsilon\}$, owing to relation \eqref{compare vol mxx and mxy} and thanks to the fact that $K$ defined by \eqref{kernel in disintegration} is bounded below, we obtain
 \begin{align*}
 p_l^\eta(t,x,y) \geq C_{H,V,l}\int_{M_{x,x}\cap\{v\in\mathfrak{g}^{(l)}:\|v\|_{\textsc{HS}}<\varepsilon\}}\rho_{t}(S_{x,y}^{-1}v) \, m_{x,x}(dv).
\end{align*}
Now choose $\tau<(\rho_{1}/2)^{\frac{1}{H}}$ to be such that 
\[
0<t<\tau\implies\{v\in\mathfrak{g}^{(l)}:\|v\|_{\textsc{CC}}\leq t^{H}\}\subseteq\lcl  v\in\mathfrak{g}^{(l)}:\|v\|_{\textsc{HS}}<\varepsilon\rcl  .
\]
We will thus lower bound $p_l^\eta(t,x,y)$ as follows
 \begin{align}\label{lower bound p_l^eta 1}
 p_l^\eta(t,x,y) \geq C_{H,V,l}\int_{M_{x,x}\cap\{v\in\mathfrak{g}^{(l)}:\|v\|_{\textsc{CC}}\leq t^{H}\}}
 \rho_{t}(S_{x,y}^{-1}v)m_{x,x}(dv).
\end{align}

Next, according to the second inequality of (\ref{eq: estimating S_xy}),
if $d(x,y)\leq t^{H}$ and $t<\tau$ (so that $d(x,y)<\rho_{1}/2$),
then 
\[
\|S_{x,y}^{-1}v\|_{\textsc{CC}}\leq2Ct^{H},
\]
provided that $v\in M_{x,x}$ with $\|v\|_{\textsc{CC}}\leq t^{H}.$
For such $x,y,t,v$, by Proposition \ref{prop: step one} we have 
\[
\rho_{t}(S_{x,y}^{-1}v)\geq\beta_{2C}t^{-H\nu}.
\]
Plugging this inequality into \eqref{lower bound p_l^eta 1}, we arrive at 
\[
p_{l}^{\eta}(t,x,y)\geq C_{H,V,l}\beta_{2C}t^{-H\nu}m_{x,x}\lp \{v\in M_{x,x}:\|v\|_{\textsc{CC}}\leq t^{H}\}\rp .
\]

\end{proof}
The next lemma relates the measure $m_{x,x}$ with the volume of the ball $B_d(x,t^H)$ defined with respect to the control distance function. 
\begin{lem}\label{lem: estimate vol m_xx}
Let $M_{x,x}$ be the set defined by \eqref{Mxy} and recall that $m_{x,x}$ is the volume measure on $M_{x,x}$.
There exist constants $C,\tau>0,$ such that 
\begin{align}\label{estimate vol m_xx}
t^{-H\nu}m_{x,x}\lp \{v\in M_{x,x}:\|v\|_{\textsc{CC}}\leq t^{H}\}\rp \geq\frac{C}{|B_{d}(x,t^{H})|}
\end{align}
for all $x\in\mathbb{R}^{N}$ and $0<t<\tau$.
\end{lem}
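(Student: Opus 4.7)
The plan is to use the disintegration formula of Proposition \ref{th: disintegration} to compute the Lebesgue volume of the Carnot--Caratheodory ball in $\mathfrak{g}^{(l)}$ in two different ways, and then compare. Set $\lambda\triangleq t^{H}$. By the homogeneity of the $\textsc{CC}$-norm and relation \eqref{eq:dilation-lebesgue-on-cal-G}, there exists an absolute constant $C_{0}>0$ (depending only on $l$) such that
\[
\lln \{u\in\mathfrak{g}^{(l)}:\|u\|_{\textsc{CC}}\leq\lambda\}\rrn =C_{0}\lambda^{\nu}.
\]
For $\lambda$ sufficiently small, the ball-box estimate (Proposition \ref{prop: ball-box estimate}) ensures that $\{\|u\|_{\textsc{CC}}\leq\lambda\}\subseteq\{\|u\|_{\textsc{HS}}<r/2\}$, so one may apply Proposition \ref{th: disintegration} (after a standard smooth approximation of the indicator) to write
\begin{equation}\label{eq:disint-applied}
C_{0}\lambda^{\nu}=\int_{\mathbb{R}^{N}}dy\int_{M_{x,y}}K(v,x)\mathbf{1}_{\{\|v\|_{\textsc{CC}}\leq\lambda\}}m_{x,y}(dv).
\end{equation}
Since $K$ is uniformly bounded above on $\{\|v\|_{\textsc{HS}}<r\}$ (this is the easy direction of Lemma \ref{lem: nondegeneracy of F}), there exists $C_{K}>0$ such that $K(v,x)\leq C_{K}$ on the relevant region.

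Next I would localize the outer integral. If $v\in M_{x,y}$ satisfies $\|v\|_{\textsc{CC}}\leq\lambda$, then by Lemma \ref{lem: d respect cc} applied to $u=v$ (noting that $F_{l}(v,x)=y-x$) one has
\[
d(x,y)=d\lp x,x+F_{l}(v,x)\rp \leq C_{d}\|v\|_{\textsc{CC}}\leq C_{d}\lambda,
\]
for some constant $C_{d}>0$. Hence the inner integral in \eqref{eq:disint-applied} vanishes unless $y\in B_{d}(x,C_{d}\lambda)$. For each such $y$, I would choose $h\in\Pi_{x,y}$ as in Lemma \ref{lem: construction of S_xy} (possible provided $\lambda$ is small enough, since then $d(x,y)<\rho_{1}/2$), and apply the change of variables $S_{x,y}$. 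The two-sided estimate \eqref{eq: estimating S_xy} gives that whenever $v\in W\cap M_{x,y}$ satisfies $\|v\|_{\textsc{CC}}\leq\lambda$, the image $u=S_{x,y}(v)\in V_{x,h}\cap M_{x,x}$ satisfies $\|u\|_{\textsc{CC}}\leq\Lambda\lambda$. Combining this with the Jacobian comparison \eqref{compare vol mxx and mxy} yields
\[
m_{x,y}\lp \{v\in M_{x,y}:\|v\|_{\textsc{CC}}\leq\lambda\}\rp \leq\Lambda\cdot m_{x,x}\lp \{u\in M_{x,x}:\|u\|_{\textsc{CC}}\leq\Lambda\lambda\}\rp .
\]
Plugging this bound back into \eqref{eq:disint-applied} one obtains
\begin{equation}\label{eq:intermediate-bound}
C_{0}\lambda^{\nu}\leq C_{K}\Lambda\cdot\lln B_{d}(x,C_{d}\lambda)\rrn \cdot m_{x,x}\lp \{u\in M_{x,x}:\|u\|_{\textsc{CC}}\leq\Lambda\lambda\}\rp .
\end{equation}

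Finally, I would set $\lambda=t^{H}/\Lambda$ and rearrange \eqref{eq:intermediate-bound} to get
\[
m_{x,x}\lp \{u\in M_{x,x}:\|u\|_{\textsc{CC}}\leq t^{H}\}\rp \geq\frac{C' t^{H\nu}}{\lln B_{d}(x,C_{d}t^{H}/\Lambda)\rrn }.
\]
This is the required inequality up to the constant multiplicative factor inside the ball radius, and the last step is to replace $B_{d}(x,C_{d}t^{H}/\Lambda)$ by $B_{d}(x,t^{H})$. This is the main obstacle: it requires a local doubling property for $d$. I would obtain it by combining Theorem \ref{thm:equiv-distances} (local Lipschitz equivalence of $d_{H}$ with the sub-Riemannian distance $d_{\textsc{BM}}$) with the classical Nagel--Stein--Wainger local doubling of the sub-Riemannian volume for hypoelliptic families of vector fields. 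These together yield $|B_{d}(x,\kappa\rho)|\leq C_{\kappa}|B_{d}(x,\rho)|$ uniformly in $x$ and in $\rho\in(0,\rho_{0}]$ for some $\rho_{0}>0$, which absorbs the offending constant in the radius and completes the proof (for all $t<\tau$ with $\tau$ determined by $\rho_{0}$, $\rho_{1}$ and $r$).
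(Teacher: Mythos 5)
Your argument is correct in its essentials, but it follows a genuinely different route from the paper. The paper's proof is a two-line transfer: it quotes the diffusion-case volume estimate of Kusuoka--Stroock ([KS87, Lemma 3.31]), which is exactly the statement with $H=1/2$ and the sub-Riemannian distance $d_{\textsc{BM}}$ in place of $d$, and then uses the local comparison $d\leq C_H d_{\textsc{BM}}$ from Theorem \ref{thm:equiv-distances} (choosing $\beta=1/C_H$ so that $B_{d_{\textsc{BM}}}(x,\beta t^{1/2})\subseteq B_d(x,t^{1/2})$) together with the substitution $t\mapsto t^{2H}$, which is legitimate because the left-hand side of \eqref{estimate vol m_xx} does not depend on $H$. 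You instead re-derive the estimate inside the paper's own framework: Lebesgue volume of the CC-ball via dilation, the disintegration formula of Proposition \ref{th: disintegration} with a bounded kernel $K$, localization of the outer integral via Lemma \ref{lem: d respect cc}, and the loop-space change of variables $S_{x,y}$ with the two-sided estimate \eqref{eq: estimating S_xy} and the Jacobian comparison \eqref{compare vol mxx and mxy}; this is essentially an unpacking, in the fractional setting, of how [KS87] proves the very lemma the paper cites. The price you pay is the final radius mismatch, which you correctly identify and resolve by a local doubling property of $|B_d(x,\cdot)|$, imported from Nagel--Stein--Wainger via Theorem \ref{thm:equiv-distances}; note that doubling is only needed when $C_d/\Lambda>1$, that its constant must be uniform in $x\in\mathbb{R}^N$ (which holds here thanks to the uniform hypoellipticity assumption, but deserves a sentence), and that Lemma \ref{lem: d respect cc} is stated in this paper only for $l=l_0$, so you should remark that its proof goes through verbatim for the fixed $l\geq l_0$ used in the definition \eqref{Mxy} of $M_{x,y}$ (this general-$l$ version is in any case what underlies \eqref{eq: estimating S_xy}). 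In short: the paper's proof is shorter but leans on the full KS87 volume lemma, while yours is more self-contained with respect to the fBm machinery and only borrows the classical sub-Riemannian doubling property; both hinge on the equivalence of distances in Theorem \ref{thm:equiv-distances}.
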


\begin{proof}
We know from \cite[Lemma 3.31]{KS87} that for any $\beta\in(0,1)$, there exists a constant $C_\beta$ such that
\begin{align}\label{eq:EstMDif}
t^{-\nu/2}m_{xx}\big(\big\{ v\in M_{x,x}:\|v\|_{{\rm CC}}\leq t^{1/2}\big\}\big)\geq\frac{C_{\beta}}{|B_{d_{{\rm BM}}}(x,\beta t^{1/2})|}
\end{align}
for all $(t,x)\in(0,1]\times\mathbb{R}^{N}$, where $d_\mathrm{BM}$ is the control distance function for the diffusion case. On the other hand, according to Theorem \ref{thm:equiv-distances}, there exist constants $C_H$ and $\delta$, such that \[
d(y,x)\leq C_H d_{{\rm BM}}(y,x)
\]when $|y-x|<\delta$. If we choose $\beta = \frac{1}{C_H}$, when $t$ is small we have \[
B_{d_{{\rm BM}}}(x,\beta t^{1/2})\subseteq B_{d}(x,t^{1/2})\subseteq\{y:|y-x|<\delta\}
\]and thus the right hand side of (\ref{eq:EstMDif}) is further bounded below by $\frac{C_\beta}{|B_d(x,t^{1/2})|}$. The desired inequality (\ref{estimate vol m_xx}) follows by changing $t\mapsto t^{2H}$.

\end{proof}

Summarizing the contents of Lemma \ref{lem: lower estimate in terms of m_xx} and Lemma \ref{lem: estimate vol m_xx},  we have obtained the following lower bound on $p_l^\eta(t,x,y)$, which finishes the second step of the main strategy.

\begin{cor}Let $p_l^\eta(t,x,y)$ be the density given by \eqref{eq: formula for p^eta_l}, and recall the notations of Lemma \ref{lem: lower estimate in terms of m_xx}. Then there exist constants  $C,\tau>0$
depending only on $H,l$ and the vector fields, such that 
\begin{equation}\label{eq: lower estimate for approximating density}
p_{l}^{\eta}(t,x,y)\geq\frac{C}{|B_{d}(x,t^{H})|}
\end{equation}
for all $x,y,t$ satisfying $d(x,y)\leq t^{H}$ and $0<t<\tau$.
\end{cor}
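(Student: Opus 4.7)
The proof at this stage is essentially a matter of chaining together the two preceding lemmas, so the plan is short. I would let $\tau_1, C_1 > 0$ denote the constants from Lemma~\ref{lem: lower estimate in terms of m_xx} and $\tau_2, C_2 > 0$ denote the constants from Lemma~\ref{lem: estimate vol m_xx}, and set $\tau \triangleq \tau_1 \wedge \tau_2$ and $C \triangleq C_1 C_2$. For any $(t,x,y) \in (0,1] \times \mathbb{R}^N \times \mathbb{R}^N$ satisfying $d(x,y) \leq t^H$ and $0 < t < \tau$, Lemma~\ref{lem: lower estimate in terms of m_xx} applies and gives
\begin{equation*}
p_l^\eta(t,x,y) \geq C_1 \, t^{-H\nu} \, m_{x,x}\bigl(\{v \in M_{x,x} : \|v\|_{\textsc{CC}} \leq t^H\}\bigr).
\end{equation*}
Since $t < \tau_2$ as well, Lemma~\ref{lem: estimate vol m_xx} further yields
\begin{equation*}
t^{-H\nu} \, m_{x,x}\bigl(\{v \in M_{x,x} : \|v\|_{\textsc{CC}} \leq t^H\}\bigr) \geq \frac{C_2}{|B_d(x,t^H)|},
\end{equation*}
and combining these two inequalities immediately produces \eqref{eq: lower estimate for approximating density}.

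There is essentially no obstacle at this step; the two hard ingredients have already been dealt with. The nontrivial content sits in Lemma~\ref{lem: lower estimate in terms of m_xx}, which required the change of variables $S_{x,y}$ together with the sharp two-sided estimate~\eqref{eq: estimating S_xy} to convert a lower bound on $\rho_t \circ S_{x,y}^{-1}$ (via the signature density bound of Proposition~\ref{prop: step one}) into an integral over the $y$-independent loop space $M_{x,x}$; and in Lemma~\ref{lem: estimate vol m_xx}, which leveraged the diffusion-case estimate from \cite{KS87} together with the local equivalence of distances $d_H \sim d_{\textsc{BM}}$ from Theorem~\ref{thm:equiv-distances} to replace the Brownian ball $B_{d_{\textsc{BM}}}(x, \beta t^{1/2})$ by the fractional ball $B_d(x, t^H)$ on the right-hand side. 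The only bookkeeping to watch is that the constant $\tau$ is taken small enough so that both lemmas are simultaneously applicable and that the condition $d(x,y) \leq t^H$ (needed for Lemma~\ref{lem: lower estimate in terms of m_xx}) is assumed at the outset. The resulting constants $C, \tau$ depend only on $H$, $l$, and the vector fields $V_\alpha$, as required.
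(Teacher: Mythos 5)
Your proof is correct and matches the paper's (implicit) argument exactly: the corollary is stated as a direct consequence of Lemma~\ref{lem: lower estimate in terms of m_xx} and Lemma~\ref{lem: estimate vol m_xx}, and the chaining you describe — taking $\tau$ to be the minimum of the two thresholds and $C$ the product of the two constants — is precisely what is meant by "summarizing the contents" of those lemmas. Your accompanying remarks about where the real work lies (the change of variables $S_{x,y}$ with the two-sided estimate, and the transfer from $d_{\textsc{BM}}$-balls to $d$-balls via Theorem~\ref{thm:equiv-distances}) are also accurate.
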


\subsection{Step three: comparing approximating and actual densities}\label{sec: step three}

The last step towards the proof of Theorem \ref{thm: local lower estimate}
will be to show that the approximating density $p_{l}^{\eta}(t,x,y)$
and the actual density $p(t,x,y)$ of $X_t^x$ are close to each other when $t$
is small. For this part, we combine the Fourier transform approach
developed in \cite{KS87} with general estimates for Gaussian rough differential
equations. As before, we assume that $l\geq l_0$.

Recall that the Fourier transform of a function $f(y)$ on $\mathbb{R}^{N}$
is defined by 
\[
\mathcal{F}f (\xi)=\hat{f}(\xi)\triangleq\int_{\mathbb{R}^{N}}f(y){\rm e}^{2\pi i\langle\xi,y\rangle} \, dy,\ \ \ \xi\in\mathbb{R}^{N}.
\]
In the sequel we will consider the Fourier transform $\hat{p}(t,x,\xi)$ (respectively, $\hat{p}_{l}^{\eta}(t,x,\xi)$)
of the density $p(t,x,y)$ (respectively, $p_{l}^{\eta}(t,x,y)$)
with respect to the $y$-variable. We will invoke the following trivial bound on $p-p_{l}$ in terms of $\hat{p}$ and $\hat{p}_{l}^{\eta}$:
\begin{equation}
|p(t,x,y)-p_{l}^{\eta}(t,x,y)|\leq\int_{\mathbb{R}^{N}}\lln \hat{p}(t,x,\xi)-\hat{p}_{l}^{\eta}(t,x,\xi)\rrn d\xi.\label{eq: Fourier estimate}
\end{equation}
Therefore our aim in this section will be to estimate the right hand side of \eqref{eq: Fourier estimate} by considering two regions $\{|\xi|\leq R\}$ and $\{|\xi|>R\}$ separately in the integral, where $R$ is some large number to
be chosen later on.

\subsubsection{Integrating relation \eqref{eq: Fourier estimate} in a neighborhood of the origin}
\label{sec: integrate-small}

We first integrate our Fourier variable $\xi$ in \eqref{eq: Fourier estimate} over the region $\{|\xi|\leq R\}$.
In this case, we make use of a
tail estimate for the error of the Taylor approximation of $X_{t}^{x}$ which is provided below. 
\begin{lem}
\label{lem: tail estimate for Taylor error}
Let $X_{t}^{x}$
be the solution to the SDE (\ref{eq: hypoelliptic SDE}), and consider its approximation $X_{l}(t,x)$ of order $l\ge \max\{l_{0},H^{-1}\}$ as defined by \eqref{eq:def-X-l}. Fix $\bar{l}\in(l,l+1)$.
Then there exist constants $C_{1},C_{2}$ depending only on $H,l$ and
the vector fields, such that for all $t\in(0,1]$ and $x,y\in\R^{N}$ we have
\begin{align}\label{eq: tail estimate of difference}
\mathbb{P}\lp |X_{t}^{x}-X_{l}(t,x)|\geq\lambda\rp \leq C_{1}\exp\lp -\frac{C_{2}\lambda^{2/\bar{l}}}{t^{2H}}\rp ,\ \ \ \text{ for all }\lambda>0.
\end{align}
\end{lem}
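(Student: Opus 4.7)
The plan is to combine a pathwise Taylor-type estimate for rough differential equations with a Fernique-type Gaussian concentration inequality for the fractional Brownian rough path. More precisely, by Davie's estimate (see e.g.\ \cite[Corollary 10.15]{FV10} applied to the rough differential equation \eqref{eq:def-rough-eq}) and recalling the variational embedding Proposition \ref{prop: variational embedding}, there exists a constant $C_{V,l}>0$ such that, on the full-measure event where the fractional Brownian rough path $\mathbf{B}$ is well-defined and geometric,
\begin{equation*}
|X_{t}^{x}-X_{l}(t,x)|
\leq C_{V,l}\,\bigl(\|\mathbf{B}\|_{q\text{-var};[0,t]}\vee \|\mathbf{B}\|_{q\text{-var};[0,t]}^{\bar l}\bigr),
\end{equation*}
for any $q\in(1/(H+1/2)^{-1},2)$, uniformly in $x\in\R^{N}$. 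Because $l\geq H^{-1}$, we have $\bar l>1$ and the dominant term for small $\|\mathbf{B}\|_{q\text{-var};[0,t]}$ is the one of order $\bar l$; the other regime gives an even better bound and will be absorbed in the constants.

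The second ingredient is a Fernique-type estimate for the $q$-variation norm of $\mathbf{B}$ over $[0,t]$. By the scaling property of fractional Brownian motion, one has $\|\mathbf{B}\|_{q\text{-var};[0,t]}\stackrel{\rm law}{=}t^{H}\|\mathbf{B}\|_{q\text{-var};[0,1]}$, and Gaussian integrability of the latter (cf.\ the Borell--Sudakov--Tsirelson/Fernique inequality for Gaussian rough paths, as developed in \cite{CQ, FV06} and sharpened in Friz--Oberhauser type statements) yields constants $C_{1}',C_{2}'>0$ such that
\begin{equation*}
\mathbb{P}\bigl(\|\mathbf{B}\|_{q\text{-var};[0,1]}\geq \mu\bigr)\leq C_{1}'\exp(-C_{2}'\mu^{2}),\qquad \mu>0.
\end{equation*}
Combining these two facts gives, for all $\lambda>0$ and $t\in(0,1]$,
\begin{equation*}
\mathbb{P}\bigl(|X_{t}^{x}-X_{l}(t,x)|\geq \lambda\bigr)
\leq \mathbb{P}\Bigl(\|\mathbf{B}\|_{q\text{-var};[0,t]}\geq (\lambda/C_{V,l})^{1/\bar l}\wedge (\lambda/C_{V,l})\Bigr)
\leq C_{1}\exp\bigl(-C_{2}\lambda^{2/\bar l}/t^{2H}\bigr),
\end{equation*}
where in the last step we used that for $\lambda$ bounded the constant $C_{1}$ can be enlarged to cover the trivial case $\lambda\lesssim 1$, and for large $\lambda$ the $\bar l$-power dominates; the constants $C_{1},C_{2}$ depend only on $H,l$ and the vector fields since both the Davie estimate and the Fernique bound have this dependence.

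The main technical point to get right will be the uniformity of the constants in the initial condition $x$, which is standard since the Davie estimate depends only on the $C_{b}^{\infty}$-norms of the vector fields, together with a careful choice of $q\in(1,2)$ slightly above $(H+1/2)^{-1}$ so that the Euler exponent $\bar l$ can indeed be chosen in the open interval $(l,l+1)$. The other mild subtlety is to merge the two regimes in the Davie estimate into a single clean bound of the form $C_{V,l}\|\mathbf{B}\|_{q\text{-var};[0,t]}^{\bar l}$ valid on a compact range of the rough path norm and to control the complementary event directly through the Gaussian tail; this is routine but must be written carefully.
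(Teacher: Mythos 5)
Your proposal follows essentially the same route as the paper's proof: the pathwise Euler/Davie estimate from \cite[Corollary 10.15]{FV10} controls $|X_t^x - X_l(t,x)|$ by a power of the rough path norm of $\mathbf{B}$ on $[0,t]$, and one combines this with the fBm scaling identity and a Fernique-type Gaussian integrability bound (the paper applies Chebyshev's inequality to the square-exponential moment $\mathbb{E}[{\rm e}^{\zeta\|\mathbf{B}\|_{p\text{-var};[0,1]}^{2}}]<\infty$ from \cite[Theorem 15.33]{FV10}, whereas you write the equivalent Gaussian tail inequality directly; the two are interchangeable here).

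One slip worth correcting: you invoke Proposition \ref{prop: variational embedding} and take the Euler estimate with a variation exponent $q$ slightly above $(H+1/2)^{-1}$. That proposition concerns the \emph{deterministic} Cameron--Martin paths $h\in\bar{\mathcal{H}}$, not the random rough path $\mathbf{B}$. The object being measured here is $\|\mathbf{B}\|_{p\text{-var};[0,t]}$, and $\mathbf{B}$ is a geometric $p$-rough path only for $p>1/H$ (Proposition \ref{prop:fbm-rough-path}); for $H<1/2$ one has $(H+1/2)^{-1}<1/H$, so $\mathbf{B}$ does not have finite $q$-variation in the range you indicate, and the Euler estimate cannot be applied there. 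Replacing your $q$ by any $p>1/H$ (as the paper does) repairs this without altering anything else: the scaling identity $\|\mathbf{B}\|_{p\text{-var};[0,t]}\stackrel{\rm law}{=}t^{H}\|\mathbf{B}\|_{p\text{-var};[0,1]}$ and the Fernique bound carry over verbatim, and your device of absorbing the small-$\lambda$ regime coming from the $\vee$ in Davie's estimate into the constant $C_1$ is sound once the exponent is corrected.
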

\begin{proof}
According to \cite[Corollary 10.15]{FV10}, we have the
following pathwise estimate
\begin{align}\label{eq: rough path estimate Euler}
|X(t,x)-X_{l}(t,x)|\leq C\cdot\|{\bf B}\|_{p-{\rm var};[0,t]}^{\bar{l}},
\end{align}
where $C=C_{H,V,l}>0$ and  ${\bf B}$ is the rough path
lifting of $B$ alluded to in Proposition \ref{prop:fbm-rough-path}. In equation \eqref{eq: rough path estimate Euler}, the parameter $p$ is any number greater that $1/H$ and the $p$-variation norm is defined with respect to the CC-norm. It follows
from \eqref{eq: rough path estimate Euler} that for any $\lambda>0$ and $\eta>0,$
we have 
\begin{align*}
  \mathbb{P}\lp |X_{t}^{x}-X_{l}(t,x)|\geq\lambda\rp 
 \leq\mathbb{P}\lp \|{\bf B}\|_{p-{\rm var};[0,t]}^{\bar{l}}\geq\lambda/C\rp .
 \end{align*}
 In addition, the fBm signature satisfies the identity in law
 $$
 ({\bf B}_s)_{0\leq s\leq t}
 \stackrel{d}{=}
 \lp \delta_{t^H}\circ{\bf B}_{s/t}\rp _{0\leq s\leq t}.
 $$
 Owing to the scaling properties of the CC-norm, we thus get that for an arbitrary $\zeta>0$ we have
 \begin{align}
 \mathbb{P}(|X^x_t-X_l(t,x)|\geq \lambda)& 
 \leq 
 \mathbb{P}\lp \|{\bf B}\|_{p-{\rm var};[0,1]}\geq\frac{(\lambda/C)^{1/\bar{l}}}{t^{H}}\rp 
 \nonumber \\
 & \leq
 \exp\lp -\frac{\zeta(\lambda/C)^{2/\bar{l}}}{t^{2H}}\rp \cdot
 \mathbb{E}\lc {\rm e}^{\zeta\|{\bf B}\|_{p-{\rm var};[0,1]}^{2}}\rc  .\label{eq: tail estimate for Taylor error}
\end{align}
According to a Fernique type estimate for the fractional Brownian rough path (cf. \cite[Theorem 15.33]{FV10}), we know that there exists $\zeta=\zeta_{H}>0$ such that
\[
\mathbb{E}\lc {\rm e}^{\zeta\|{\bf B}\|_{p-{\rm var};[0,1]}^{2}}\rc  <\infty.
\]
Plugging this inequality into \eqref{eq: tail estimate for Taylor error}, our conclusion \eqref{eq: tail estimate of difference} is easily obtained. 
\end{proof}

We are now ready to derive a Fourier transform estimate for small values of $\xi$.
\begin{lem}\label{lem: bounded region part}
Under the same notation as in Lemma \ref{lem: tail estimate for Taylor error}, let $p(t,x,\cdot)$ be the density of the random variable $X_t^x$ and let $p_l^\eta(t,x,\cdot)$ be the approximating density given by \eqref{eq: formula for p^eta_l}. Then their Fourier transforms satisfy the following inequality over the region $\{|\xi|\leq R\}$,
\begin{align}\label{eq: bounded region part}
 \lln \hat{p}(t,x,\xi)-\hat{p}_{l}^{\eta}(t,x,\xi)\rrn \leq C_{H,V,l}(1+|\xi|)t^{H\bar{l}},
\end{align}provided that $t<\tau_1$ for some constant $\tau_1$ depending on $H,l$ and the vector fields.
\end{lem}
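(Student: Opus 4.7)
The plan is to decompose the difference of Fourier transforms according to the truncation factor $\eta(U_t)$ and then estimate each piece separately. Writing
\begin{align*}
\hat{p}(t,x,\xi)-\hat{p}_{l}^{\eta}(t,x,\xi)
&= \mathbb{E}\lc e^{2\pi i\langle\xi,X_t^x\rangle}-e^{2\pi i\langle\xi,X_l(t,x)\rangle}\rc\\
&\qquad + \mathbb{E}\lc (1-\eta(U_t))\,e^{2\pi i\langle\xi,X_l(t,x)\rangle}\rc \triangleq A_1(\xi)+A_2(\xi),
\end{align*}
we see that $A_2(\xi)$ is independent of $\xi$ in modulus (bounded by $\mathbb{P}(\|U_t\|_{\textsc{HS}}\geq r/2)$) while $A_1(\xi)$ is the term where the rough path approximation quality must be used. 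The estimate \eqref{eq: bounded region part} will follow once each term is bounded by a constant multiple of $t^{H\bar{l}}$, with at most a linear factor in $|\xi|$ for the first term.

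First, using the elementary inequality $|e^{ia}-e^{ib}|\leq|a-b|$ yields
\begin{equation*}
|A_1(\xi)|\leq 2\pi|\xi|\,\mathbb{E}\lc |X_t^x-X_l(t,x)|\rc .
\end{equation*}
The remaining $L^1$-estimate for the Taylor remainder is obtained by integrating the tail bound \eqref{eq: tail estimate of difference} from Lemma \ref{lem: tail estimate for Taylor error}. Namely,
\begin{equation*}
\mathbb{E}\lc |X_t^x-X_l(t,x)|\rc =\int_0^{\infty}\mathbb{P}\lp |X_t^x-X_l(t,x)|\geq\lambda\rp d\lambda\leq C_1\int_0^\infty e^{-C_2\lambda^{2/\bar{l}}/t^{2H}}d\lambda,
\end{equation*}
and the change of variable $\lambda=t^{H\bar l}u^{\bar{l}/2}$ reduces the integral to a Gamma-type constant times $t^{H\bar{l}}$. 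Hence $|A_1(\xi)|\leq C_{H,V,l}\,|\xi|\,t^{H\bar{l}}$.

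For $A_2(\xi)$, since $\eta\equiv1$ on $\{\|u\|_{\textsc{HS}}<r/2\}$, we have $|A_2(\xi)|\leq \mathbb{P}(\|U_t\|_{\textsc{HS}}\geq r/2)$. The scaling identity \eqref{eq: scaling property of U_t}, namely $U_t\stackrel{d}{=}\delta_{t^H}U_1$, together with a union bound on the components of $U_t$, gives
\begin{equation*}
\mathbb{P}(\|U_t\|_{\textsc{HS}}\geq r/2)\leq \sum_{k=1}^l \mathbb{P}\lp \|U_1^{(k)}\|_{\textsc{HS}}\geq c_l\, r\, t^{-kH}\rp ,
\end{equation*}
and each term on the right hand side is controlled by the Fernique-type tail bound for the fractional Brownian signature used in the proof of Lemma \ref{lem: tail estimate for Taylor error} (cf.\ \cite[Theorem 15.33]{FV10}). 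This yields a bound of the form $C\exp(-c\, t^{-2H/l})$ which is $o(t^{H\bar{l}})$ as $t\to 0$, and hence is absorbed into a constant multiple of $t^{H\bar{l}}$ for $t<\tau_1$ sufficiently small. Combining the estimates for $A_1$ and $A_2$ gives \eqref{eq: bounded region part}; the main subtlety of the argument lies in choosing $\tau_1$ uniformly so that the exponential remainder from $A_2$ is dominated by the polynomial rate $t^{H\bar{l}}$, which requires $l\geq H^{-1}$ as assumed in Lemma \ref{lem: tail estimate for Taylor error}.
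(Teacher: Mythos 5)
Your proof is correct and follows essentially the same route as the paper: the identical decomposition into the exponential-difference term and the $(1-\eta(U_t))$ term, the bound $|e^{ia}-e^{ib}|\le|a-b|$ combined with the integrated tail estimate of Lemma \ref{lem: tail estimate for Taylor error} for the first piece, and a Fernique/scaling tail bound for the second. (One small quibble: the closing remark that the domination of the exponential tail by $t^{H\bar l}$ "requires $l\ge H^{-1}$" is off — an exponential in a negative power of $t$ beats any polynomial rate regardless of $l$; that hypothesis is needed elsewhere in Step Three, not here.)
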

\begin{proof}Notice that according to our definition \eqref{eq: formula for p^eta_l} of $p^\eta_l$, we have
\begin{align*}
\hat{p}(t,x,\xi)=\mathbb{E}\lc e^{2\pi i\langle \xi, X_t^x\rangle}\rc  ,\quad\text{and}\quad \hat{p}_l^\eta(t,x,\xi)=\mathbb{E}\lc \eta(U_t)e^{2\pi i\langle \xi, X_l(t,x)\rangle}\rc  .
\end{align*}
Hence it is easily seen that
\begin{eqnarray}
  \lln \hat{p}(t,x,\xi)-\hat{p}_{l}^{\eta}(t,x,\xi)\rrn  
 &\leq&
 \mathbb{E}\lc \lln {\rm e}^{-2\pi i\langle\xi,X_{t}^{x}\rangle}-{\rm e}^{-2\pi i\langle\xi,X_{l}(t,x)\rangle}\rrn \rc  +\mathbb{E}[1-\eta(U_{t})]\nonumber\\
 & \leq&
 2\pi|\xi|\cdot\mathbb{E}\lc |X_{t}^{x}-X_{l}(t,x)|\rc  +\mathbb{E}[1-\eta(U_{t})].\label{Fourier difference bound mid step1}
\end{eqnarray}
Now in order to bound the right hand-side of \eqref{Fourier difference bound mid step1}, we first invoke Lemma \ref{lem: tail estimate for Taylor error}. This yields
\begin{eqnarray}
  \mathbb{E}\lc |X_{t}^{x}-X_{l}(t,x)|\rc  
 & =&
 \int_{0}^{\infty}\mathbb{P}\lp |X_{t}^{x}-X_{l}(t,x)|\geq\lambda\rp d\lambda\nonumber\\
 & \leq& 
 C_{1}\int_{0}^{\infty}\exp\lp -\frac{C_{2}\lambda^{2/\bar{l}}}{t^{2H}}\rp d\lambda
  =
 C_{3}t^{H\bar{l}}.\label{Fourier difference bound mid step2}
\end{eqnarray}
On the other hand, using a similar argument to the proof of Lemma
\ref{lem: tail estimate for Taylor error}, we see that there exists a strictly positive exponent $\al_{H,l}$ such that
\begin{align}\label{Fourier difference bound mid step3}
\mathbb{E}[1-\eta(U_{t})]\leq\mathbb{P}\lp \|U_{t}\|_{\textsc{HS}}\geq\frac{r}{2}\rp \leq C_{4}\cdot\exp\big(-\frac{C_{5}}{t^{\alpha_{H,l}}}\big).
\end{align}
By taking $t$ small enough, we can make the right hand-side of \eqref{Fourier difference bound mid step3} smaller than $C_{6}t^{H\bar{l}}$.  Hence there exists $\tau_1>0$ such that if $t\leq \tau_1$ we have
\begin{align}\label{Fourier difference bound mid step4}
\mathbb{E}[1-\eta(U_{t})]\leq C_{6}t^{H\bar{l}}.
\end{align}
Now combining \eqref{Fourier difference bound mid step2} and \eqref{Fourier difference bound mid step4}, we easily get our conclusion \eqref{eq: bounded region part}.
\end{proof}

\subsubsection{Integrating relation \eqref{eq: Fourier estimate} for large Fourier modes}
\label{sec: integrate-large}

We now integrate the Fourier variable $\xi$ over the region $\{|\xi|>R\}$.
In this case, we make use of certain upper estimates
for $p(t,x,y)$ and $p_{l}^{\eta}(t,x,y)$. We start with a bound on the density of $X_{t}^{x}$ which is also of independent
interest. The main ingredients of the proof are basically known in
the literature, but to our best knowledge the result (for the hypoelliptic
case) has not been formulated elsewhere.
\begin{prop}
\label{prop: Gaussian upper bound for p}Let $p(t,x,y)$ be the density of the random variable $X_t^x$. As in Lem\-ma~\ref{lem: bounded region part} we assume that the uniform hypoellipticity condition  \eqref{eq:unif-hypo-assumption} is satisfied. Then for each $n\geq1,$
there exist constants $C_{1,n},C_{2,n},\nu_{n}>0$ depending on $n,H$
and the vector fields such that 
\begin{align}\label{general density upper bound for hypoellitpic SDE}
|\partial_{y}^{n}p(t,x,y)|\leq C_{1,n}t^{-\nu_{n}}\exp\lp -\frac{C_{2,n}|y-x|^{2\wedge(2H+1)}}{t^{2H}}\rp ,
\end{align}
for all $(t,x,y)\in(0,1]\times\mathbb{R}^{N}\times\mathbb{R}^{N}$, where $\partial_y^n$ denotes the $n$-th order derivative operator with respect to the $y$ variable.
\end{prop}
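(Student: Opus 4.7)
The plan is to prove the proposition by combining the Malliavin integration by parts representation for derivatives of densities with (a) the negative moments of the Malliavin covariance matrix of $X_t^x$ under uniform hypoellipticity established in \cite{CHLT15}, (b) the fractional Brownian self-similarity, and (c) a Fernique-type tail bound for $|X_t^x - x|$. This is a refinement of the Kusuoka--Stroock scheme adapted to the rough differential equation setting.

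First, by standard Malliavin calculus (cf.\ \cite{Nualart06}), for any multi-index $\beta$ with $|\beta|=n$,
$$
\partial_{y}^{\beta} p(t,x,y) = (-1)^{n}\,\mathbb{E}\bigl[\mathbf{1}_{\{X_{t}^{x}\ge y\}}\,H_{\beta}(X_{t}^{x},1)\bigr],
$$
where $H_{\beta}$ is an iterated Skorohod integral built from the entries of $\gamma_{X_{t}^{x}}^{-1}$ and the Malliavin derivatives $\mathbf{D}^{k}X_{t}^{x}$. Cauchy--Schwarz, applied on the inclusion $\{X_{t}^{x}\ge y\}\subseteq\{|X_{t}^{x}-x|\ge |y-x|\}$, gives
$$
|\partial_{y}^{\beta} p(t,x,y)| \le \mathbb{P}\bigl(|X_{t}^{x}-x|\ge |y-x|\bigr)^{1/2}\cdot\|H_{\beta}(X_{t}^{x},1)\|_{L^{2}}.
$$
The first factor is controlled via the rough-path pathwise estimate $|X_{t}^{x}-x|\le C\|\mathbf{B}\|_{p\text{-var};[0,t]}$ (as in Lemma \ref{lem: tail estimate for Taylor error}), the self-similarity $\|\mathbf{B}\|_{p\text{-var};[0,t]}\stackrel{d}{=}t^{H}\|\mathbf{B}\|_{p\text{-var};[0,1]}$, and the Gaussian integrability of Friz--Victoir for the fractional Brownian rough path (\cite[Theorem 15.33]{FV10}). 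The exponent $2\wedge(2H+1)$ emerges from distinguishing the two regimes: when $H\ge 1/2$, the square-exponential integrability of $\|\mathbf{B}\|_{p\text{-var};[0,1]}$ yields the Gaussian exponent $2$; when $H<1/2$, the Cameron--Martin embedding $\bar{\mathcal{H}}\hookrightarrow C^{q\text{-var}}$ with $q=(H+1/2)^{-1}$ from Proposition \ref{prop: variational embedding} forces the weaker exponent $2H+1$. In both regimes the scaling denominator can be uniformly replaced by $t^{2H}$ for $t\in(0,1]$, at the price of a constant.

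The second factor $\|H_{\beta}(X_{t}^{x},1)\|_{L^{2}}$ is handled by iterated Hölder, using two ingredients: polynomial bounds in $t^{H}$ for the moments of $\mathbf{D}^{k}X_{t}^{x}$ from standard rough-path derivative estimates, together with $L^{p}$ control of $(\det\gamma_{X_{t}^{x}})^{-1}$, which is the source of the singular prefactor $t^{-\nu_{n}}$. These negative moments are provided by the hypoelliptic Norris lemma of \cite{CHLT15}: under our uniform hypoellipticity assumption it delivers a bound of the form $\mathbb{E}[(\det\gamma_{X_{t}^{x}})^{-p}]\le C_{p}\,t^{-H\kappa_{p}}$, with $\kappa_{p}$ controlled by $p$ and the hypoellipticity constant $l_{0}$. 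Assembling both estimates produces the claimed inequality with an explicit $\nu_{n}$ depending on $n$, $H$ and $l_{0}$. The main obstacle is the precise bookkeeping of $\nu_{n}$: unlike the elliptic case where $\gamma_{X_{t}^{x}}$ is uniformly non-degenerate modulo the scaling factor $t^{2H}$, here the smallest eigenvalue of $\gamma_{X_{t}^{x}}$ degenerates as a power of $t^{H}$ dictated by $l_{0}$, and one must combine this degeneracy rate with the fBm scaling to extract the polynomial prefactor uniformly in $x,y$. Once this is in hand, the remaining pieces (Hölder, scaling, Gaussian concentration) are routine.
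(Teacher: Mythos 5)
Your overall architecture coincides with the paper's: the proof there is exactly the integration-by-parts bound
$|\partial_y^n p(t,x,y)|\le C\,\mathbb{P}(|X_t^x-x|\ge|y-x|)^{1/2}\,\|\gamma_{X_t^x}^{-1}\|_{\alpha,p}^{\alpha}\,\|\mathbf{D}X_t^x\|_{\beta,q}^{\beta}$ (quoted from \cite[Relation (24)]{BNOT16}), followed by (i) the exponential tail estimate for $X_t^x-x$ as in \cite[Relation (25)]{BNOT16}, (ii) moment bounds for the Malliavin derivatives as in \cite[Lemma 3.5 (1)]{BOZ15}, and (iii) quantitative small-time negative moments of $\gamma_{X_t^x}$ as in \cite[Lemma 3.5 (2)]{BOZ15}. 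So the skeleton you propose is the right one.

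However, your derivation of the exponential factor has a genuine gap. The pathwise estimate $|X_t^x-x|\le C\|\mathbf{B}\|_{p\text{-var};[0,t]}$ is false for an RDE with $C_b^\infty$ vector fields: the Friz--Victoir bound is $|X_t^x-x|\le C\big(\|\mathbf{B}\|_{p\text{-var}}\vee\|\mathbf{B}\|_{p\text{-var}}^{p}\big)$ with $p>1/H$, so combining it with the Fernique estimate of \cite[Theorem 15.33]{FV10} only yields tails of order $\exp\big(-c\lambda^{2/p}/t^{2H}\big)$ with $2/p<2H$, which is strictly weaker than the claimed exponent $2\wedge(2H+1)$; and even if the map were linear in $\|\mathbf{B}\|_{p\text{-var}}$, Fernique would give exponent $2$ in all regimes and would never produce $2H+1$. (In Lemma \ref{lem: tail estimate for Taylor error} this crude route is fine because no sharp exponent is claimed there.) The exponent $2\wedge(2H+1)$ comes from Borell/Cass--Litterer--Lyons-type arguments: the solution grows linearly in the greedy partition number $N(\mathbf{B})$, whose tails are governed by the Cameron--Martin variational embedding $q=(H+1/2)^{-1}$, giving $\exp(-c\lambda^{2/q})=\exp(-c\lambda^{2H+1})$ capped at $2$; this is precisely what \cite[Relation (25)]{BNOT16} encodes, and your proof must invoke that machinery rather than plain Fernique. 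A smaller issue of the same flavour: the bound $\mathbb{E}[(\det\gamma_{X_t^x})^{-p}]\le C_p t^{-H\kappa_p}$ is not what \cite{CHLT15} provides (that reference gives finiteness of negative moments at fixed $t$, uniform in $x$); the quantitative small-time rate, which is where $\nu_n$ comes from, is obtained in \cite[Lemma 3.5 (2)]{BOZ15} by the fBm scaling that reduces $X_t^x$ to the time-one solution of the equation with vector fields $t^H V_\alpha$ under uniform hypoellipticity. With these two substitutions your argument matches the paper's.
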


\begin{proof}
Elaborating on the integration by parts invoked for example in \cite[Relation (24)]{BNOT16}, there exist exponents $\alpha, \beta, p,q>1$ such that 
\begin{align}\label{IBP density bound}
|\partial_{y}^{n}p(t,x,y)|\leq C_{1,n}\mathbb{P}(|X_t^x-x|\geq|y-x|)^{\frac{1}{2}}\cdot\|\gamma^{-1}_{X_t^x}\|_{\alpha,p}^\alpha\cdot\|\mathbf{D}X_t^x\|_{\beta,q}^\beta,
\end{align}
where  $\gamma_{X_t^x}$ denotes the Malliavin covariance matrix and $\|\cdot\|_{k, p}$ denotes the Gaussian-Sobolev norm.  Then with \eqref{IBP density bound} in hand, we proceed in the following way:

\vspace{2mm}

\noindent (i) An exponential tail estimate for $X_t^x$ yield the exponential term in \eqref{general density upper bound for hypoellitpic SDE}. This step is achieved as in \cite[Relation (25)]{BNOT16}.\\
(ii) The Malliavin derivatives of $X_t^x$ are estimated as in \cite[Lemma 3.5 (1)]{BOZ15}. This produces some positive powers of $t$ in \eqref{general density upper bound for hypoellitpic SDE}.\\
(iii) The inverse of the Malliavin covariance matrix is bounded as in \cite[Lemma 3.5 (2)]{BOZ15}. It gives some negative powers of $t$ in \eqref{general density upper bound for hypoellitpic SDE}.

\vspace{2mm}

\noindent For the sake of conciseness, we will not detail the steps outlined as above. We refer the reader to \cite{BNOT16, BOZ15} for the details. 
\end{proof}
The following lemma parallels Proposition \ref{prop: Gaussian upper bound for p} for the approximating process $X_l(t,x)$. 
\begin{lem}
\label{lem: upper estimate for approximating density}Assume the same hypothesis as in Proposition \ref{prop: Gaussian upper bound for p}. Recall that the approximating density $p_l^\eta$ is defined by \eqref{eq: formula for p^eta_l}.  { Fix $l\geq l_0$}. Then
for each $n\geq1$ there exist constants $C_n=C_n(H,l)$ and $\gamma_n=\gamma_n(H,l_0)$ such that for all $(t,x)\in(0,1]\times\mathbb{R}^{N}$ the following bound holds true:
\begin{align}\label{eq: upper bound approximating density}
\|\partial_{y}^{n}p_{l}^{\eta}(t,x,\cdot)\|_{C_{b}^{n}(\mathbb{R}^{N})}\leq C_{n}\cdot t^{-\gamma_{n}}.
\end{align}
Moreover, the function $\partial_{y}^{n}p_{l}^{\eta}(t,x,\cdot)$ is compactly supported in $\R^{N}$.
\end{lem}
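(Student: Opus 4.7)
The plan is to treat the approximating density via Malliavin calculus applied to the random variable $X_l(t,x)=x+F_l(U_t,x)$ weighted by the cut-off $\eta(U_t)$, and then to extract the correct power of $t$ from the fractional Brownian scaling of the log-signature $U_t$. First we observe that since $F_l(\cdot,x)$ is a polynomial map on $\mathfrak{g}^{(l)}$ with coefficients controlled by the $C_b^\infty$ norms of the $V_\alpha$'s, on the support $\{\|u\|_{\textsc{HS}}<r\}$ of $\eta$ we have $|F_l(U_t,x)|\leq C_{l,V}$ deterministically. Hence $p^{\eta}_{l}(t,x,\cdot)$ is supported in the Euclidean ball $B(x,C_{l,V})$, which already yields the compact support statement.

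For the $C^n_b$ bound I would write, in the distributional sense,
\begin{equation*}
\partial^{n}_y p^{\eta}_l(t,x,y)=(-1)^{n}\mathbb{E}\bigl[\eta(U_t)\,\delta_{y}^{(n)}(X_l(t,x))\bigr],
\end{equation*}
and then apply the standard Malliavin integration by parts formula (see e.g. \cite[Proposition~2.1.4]{Nualart06}) $n+N$ times. This rewrites $\partial^{n}_y p^{\eta}_l(t,x,y)$ as $\mathbb{E}[\mathbf{1}_{\{X_l(t,x)\leq y\}}\,H_n(t,x)]$, where $H_n$ is a polynomial in $\eta(U_t)$ and its derivatives (which are bounded), in iterated Malliavin derivatives of $X_l(t,x)$, and in entries of the inverse Malliavin covariance matrix $\gamma^{-1}_{X_l(t,x)}$. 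The desired bound then follows from Cauchy-Schwarz, provided one controls the relevant $L^p(\Omega)$ norms.

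The central computation is the Malliavin covariance matrix. Since $\mathbf{D} X_l(t,x)=JF_l(U_t,x)\mathbf{D} U_t$, one has the factorization
\begin{equation*}
\gamma_{X_l(t,x)}=JF_l(U_t,x)\,\gamma_{U_t}\,JF_l(U_t,x)^{*}.
\end{equation*}
On the support of $\eta$ the matrix $JF_l(U_t,x) JF_l(U_t,x)^{*}$ is bounded below by a positive constant $c=c(l_0,V)$ by Corollary \ref{cor: local submersion} and Remark \ref{rem: consistent choice of r}, which is where uniformity in $l$ enters: the lower bound depends only on the hypoellipticity constant $l_0$. The fBm scaling $U_t\stackrel{d}{=}\delta_{t^{H}}U_1$ established in \eqref{eq: scaling property of U_t} then lets me reduce negative moment estimates for $\gamma_{U_t}$ (which have been proved in \cite[Theorem 3.3]{BFO19} in this log-signature setting) to those for $\gamma_{U_1}$, multiplied by the appropriate power of $t^{2H}$ coming from the dilation $\delta_{t^H}$. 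Concretely, this gives $\mathbb{E}[(\det \gamma_{X_l(t,x)})^{-p}\eta(U_t)^{2}]\leq C_{p,l}\,t^{-\gamma(p,H,l_0)}$ with the exponent $\gamma$ depending only on $H$ and $l_0$, since after projecting by $JF_l^{*}$ onto $\R^{N}$ the relevant scaling only involves the first-order component of $\mathfrak{g}^{(l)}$.

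The remaining $L^p$ norms of iterated Malliavin derivatives of $X_l(t,x)$ (equivalently, of $U_t$ composed with the polynomial $F_l$) are handled exactly as in the proofs of Lemma~3.5 in \cite{BOZ15} and Proposition \ref{prop: Gaussian upper bound for p}; they again produce only positive powers of $t$ with exponents that, after the scaling argument, depend only on $H$ and $l_0$. Multiplying everything together yields the claimed bound $C_n t^{-\gamma_n}$. The main obstacle I anticipate is the uniformity in $l$ of the power $\gamma_n$: one has to be scrupulous in not using any estimate whose exponent picks up $\dim\mathfrak{g}^{(l)}$, which grows with $l$. The factorization through $JF_l$ and the $l_0$-dependent lower bound from Corollary \ref{cor: local submersion} are precisely what circumvent this, so the argument naturally produces $\gamma_n=\gamma_n(H,l_0)$ while only the prefactor $C_n$ is allowed to depend on $l$.
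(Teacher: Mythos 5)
Your overall framework — Malliavin integration by parts for the localized law, the factorization $\gamma_{X_l(t,x)}=JF_l(U_t,x)\,\gamma_{U_t}\,JF_l(U_t,x)^*$, the $l$-uniform positive-definiteness of $JF_l\,JF_l^*$ from Lemma \ref{lem: nondegeneracy of F}, and the compact-support observation — is the same as the paper's. However, there is a genuine gap at the step you yourself flag as the critical one: the $l$-independence of the exponent $\gamma_n$.

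The naive argument you sketch does \emph{not} give $\gamma_n=\gamma_n(H,l_0)$. From $JF_l\,JF_l^*\geq c\,\mathrm{Id}$ one only gets, for $\eta\in S^{N-1}$,
\[
\eta^*\,JF_l\,\gamma_{U_t}\,JF_l^*\,\eta\;\geq\;\lambda_{\min}(\gamma_{U_t})\cdot\eta^*JF_l JF_l^*\eta\;\geq\;c\,\lambda_{\min}(\gamma_{U_t}),
\]
and the smallest eigenvalue of $\gamma_{U_t}$ scales like $t^{2Hl}$, not $t^{2Hl_0}$: the $\mathfrak{g}^{(l)}$-valued log-signature has components of degrees $1,\ldots,l$, the dilation $\delta_{t^H}$ weights the degree-$k$ block by $t^{Hk}$, and the smallest eigenvalue is governed by the top degree $l$. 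Thus the negative-moment bound you would obtain is $\mathbb{E}[(\det\gamma_{X_l})^{-p}\eta(U_t)^2]\lesssim t^{-2HlNp}$, with an exponent that grows with $l$. Your justification that "after projecting by $JF_l^*$ onto $\mathbb{R}^N$ the relevant scaling only involves the first-order component of $\mathfrak{g}^{(l)}$" is incorrect: the projection mixes all tensor degrees up to $l$ (this mixing is precisely what hypoellipticity buys you at level $l_0$), and in the Malliavin covariance matrix each degree contributes its own power of $t$. Nothing in the factorization alone separates the degrees $\leq l_0$ from those $> l_0$.

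What is actually needed is a decomposition $X_l(t,x)=X_{l_0}(t,x)+R_t$, where $R_t$ collects the terms $V_{(\alpha)}(x)(\exp U_t)^\alpha$ with $l_0<|\alpha|\leq l$. One then lower bounds $\lambda_{\min}(\gamma_{X_l(t,x)})\geq\tfrac12\lambda_{\min}(\gamma_{X_{l_0}(t,x)})-\|\gamma_{R_t}\|_{\mathrm{F}}$, shows $\lambda_{\min}(\gamma_{X_{l_0}})\gtrsim t^{2Hl_0}$ using the $l_0$-truncated log-signature (with an application of Lemma \ref{lem: nondegeneracy of F} at level $l_0$), and exploits the fact that $R_t$ involves only signature components of order $\geq l_0+1$, so $\gamma_{R_t}$ scales like $t^{2H(l_0+1)}$, i.e.\ one power of $t^{2H}$ faster. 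A case split on whether $\|\gamma_{R_t}\|_{\mathrm{F}}$ is below or above $\tfrac14\lambda_{\min}(\gamma_{X_{l_0}})$ then yields a uniform bound with exponent depending only on $H$ and $l_0$. This is the content of \cite[Section 5.3 IV]{GOT19} referenced in the remark following the lemma, and it is not reproduced by the argument you give.
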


\begin{proof}
Recall that for a
differentiable random vector $Z=(Z^{1},\ldots,Z^{n})$ in the sense
of Malliavin, we use the notation $\gamma_{Z}\triangleq(\langle DZ^{i},DZ^{j}\rangle_{\bar{{\cal H}}}){}_{1\leq i,j\leq n}$
to denote its Malliavin covariance matrix. By the definition \eqref{eq:def-X-l} of
$X_{l}(t,x),$ it is immediate that 
\begin{align}\label{equ: Malliavin matrix X_l}
\gamma_{X_{l}(t,x)}=JF_{l}(U_{t}^{(l)},x)\cdot \gamma_{U_{t}^{(l)}}\cdot JF_{l}(U_{t}^{(l)},x)^{*}.
\end{align}
It follows from \cite[Theorem 3.3]{BFO19} and Lemma \ref{lem: nondegeneracy of F} that $\gamma^{-1}_{X_l(t,x)}\in L^q$ for all $q>1$. Now  the uniform upper bound for the derivatives
of $p_{l}^{\eta}(t,x,y)$ follows from the same lines as in the proof
of Proposition \ref{prop: Gaussian upper bound for p} (with the same three
main ingredients (i),(ii),(iii)), based on the integration by parts formula. 
\end{proof}

\begin{rem}We must point out that the exponent $\gamma_n$ in \eqref{eq: upper bound approximating density} depends on $l_0$ but \textit{not} on $l$. This subtle technical point is critical for us, and its proof requires a non-trivial amount of analysis. For the sake of conciseness, we refer the reader to \cite[Section 5.3 IV]{GOT19} for a detailed discussion on this matter. 
\end{rem}

We now  return to the Fourier estimate (\ref{eq: Fourier estimate}) for
the region $|\xi|>R$.  In particular, we have the following result.

\begin{lem}\label{lem: large xi part}
Using the same notation and hypothesis  as in Lemma \ref{lem: bounded region part}, the Fourier transforms $\hat{p}$ and $\hat{p}_l^\eta$ are such that for all $|\xi| > R$ we have
\begin{equation}
|\xi|^{N+2}\lp \lln \widehat{p}(t,x,\xi)\rrn +\lln \widehat{p}_{l}^{\eta}(t,x,\xi)\rrn \rp \leq C\cdot t^{-\mu},
\label{eq: outer ball part}
\end{equation}
for some strictly positive constants $C=C_{N,H,V,l}$ and $\mu=\mu_{N,H,V,l_{0}}$.
\end{lem}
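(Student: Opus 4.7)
The plan is to derive (\ref{eq: outer ball part}) from the standard duality between polynomial decay of the Fourier transform and smoothness plus integrability of the underlying function. Concretely, I would exploit the identity $\widehat{\partial^{\alpha}f}(\xi)=(2\pi i\xi)^{\alpha}\widehat{f}(\xi)$, valid whenever $f$ and its weak derivatives $\partial^{\alpha}f$ lie in $L^{1}(\mathbb{R}^{N})$, together with the elementary inequality $|\xi|^{N+2}\leq C_{N}\sum_{j=1}^{N}|\xi_{j}|^{N+2}$ (which follows from $|\xi|^{2}\leq N\max_{j}\xi_{j}^{2}$). This reduces the required estimate to controlling the one-dimensional derivatives
\[
\|\partial_{y_{j}}^{N+2}p(t,x,\cdot)\|_{L^{1}(\mathbb{R}^{N})}\quad\text{and}\quad\|\partial_{y_{j}}^{N+2}p_{l}^{\eta}(t,x,\cdot)\|_{L^{1}(\mathbb{R}^{N})}
\]
by a negative power of $t$, uniformly in $x$ and $\xi$.

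For the approximating density $p_{l}^{\eta}$, the heavy lifting has already been done in Lemma \ref{lem: upper estimate for approximating density}: it supplies the pointwise bound $\|\partial_{y}^{N+2}p_{l}^{\eta}(t,x,\cdot)\|_{\infty}\leq C\, t^{-\gamma_{N+2}}$, along with the essential observation that the support of $y\mapsto\partial_{y}^{N+2}p_{l}^{\eta}(t,x,y)$ is contained in a Euclidean ball centered at $x$ whose radius is determined solely by the cut-off $r$ and the uniform bounds on the vector fields, and in particular is independent of $t$. Multiplying the supremum bound by the volume of this support produces an $L^{1}$-bound of the form $C\, t^{-\gamma_{N+2}}$, which is exactly of the form appearing on the right-hand side of (\ref{eq: outer ball part}).

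For the true density $p(t,x,y)$, I would invoke Proposition \ref{prop: Gaussian upper bound for p} with $n=N+2$. This yields, with $\kappa\triangleq 2\wedge(2H+1)$,
\[
|\partial_{y}^{N+2}p(t,x,y)|\leq C_{1}\, t^{-\nu_{N+2}}\exp\Big(-\frac{C_{2}|y-x|^{\kappa}}{t^{2H}}\Big).
\]
Integrating in $y$ and performing the natural rescaling $z=t^{-2H/\kappa}(y-x)$ converts the Gaussian-type tail into a fixed convergent integral, at the cost of a Jacobian factor of order $t^{2HN/\kappa}$; this gives $\|\partial_{y}^{N+2}p(t,x,\cdot)\|_{L^{1}}\leq C\, t^{2HN/\kappa-\nu_{N+2}}$. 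Setting $\mu\triangleq\nu_{N+2}-2HN/\kappa$ (and, if necessary, trivially enlarging $\mu$ so that it is positive) then closes the argument for $\widehat{p}$.

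I do not anticipate serious technical difficulty at this step, since all the quantitative analytic input has been packaged upstream: the uniform-in-$t$ derivative estimates for $p_{l}^{\eta}$ carried by Lemma \ref{lem: upper estimate for approximating density} and the Gaussian-type bound on $p$ carried by Proposition \ref{prop: Gaussian upper bound for p}. The only conceptual point worth emphasizing is that the exponent $N+2$ is chosen precisely so that $|\xi|^{-(N+2)}$ is integrable over $\{|\xi|>R\}\subseteq\mathbb{R}^{N}$; this integrability, combined with (\ref{eq: bounded region part}) on the complementary ball $\{|\xi|\leq R\}$, is what will eventually allow one to optimize (\ref{eq: Fourier estimate}) over $R$ and conclude the comparison of densities needed in the proof of Theorem \ref{thm: local lower estimate}.
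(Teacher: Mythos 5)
Your proposal is correct and follows essentially the same route as the paper: Fourier--derivative duality to reduce to $L^{1}$ bounds on $\partial_{y}^{N+2}p$ and $\partial_{y}^{N+2}p_{l}^{\eta}$, then Proposition \ref{prop: Gaussian upper bound for p} for the true density and the compact support plus supremum bound from Lemma \ref{lem: upper estimate for approximating density} for the approximating density. You merely spell out the reduction $|\xi|^{N+2}\leq C_{N}\sum_{j}|\xi_{j}|^{N+2}$ and the rescaling of the Gaussian tail more explicitly than the paper, which leaves these steps implicit.
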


\begin{proof}According to standard compatibility rules between Fourier transform and differentiation, we have (recall that $\mathcal{F}f$ and $\hat{f}$ are both used to designate the Fourier transform of a function $f$):
\begin{align*}
&|\xi|^{N+2}(|\hat{p}(t,x,\xi)|+|\hat{p}^\eta_l(t,x,\xi)|)\\
&\leq C_N\lp |\mathcal{F}(\partial_y^{N+2}p(t,x,y)|+|\mathcal{F}(\partial_y^{N+2}p^\eta_l(t,x,y)|\rp .
\end{align*}
Plugging \eqref{general density upper bound for hypoellitpic SDE} and \eqref{eq: upper bound approximating density} into this relation and using the fact that $\partial_y^{N+2}p^\eta_l(t,x,\cdot)$ is compactly supported, our claim \eqref{eq: outer ball part} is easily proved.
\end{proof}

\subsubsection{Comparison of the densities}
\label{sec: com-den}

Combining the previous preliminary results on Fourier transforms we get the following uniform bound on the difference $p-p^\eta_l$.
\begin{prop}
We still keep the same notation and assumptions of Lemma \ref{lem: bounded region part}. Then there exists $\tau>0$ such that for all $t\leq \tau$ and $x,y\in\mr^N$ we have
\begin{align}\label{eq: comparison estimate}
|p(t,x,y)-p_l^\eta(t,x,y)|\leq C_{H,V,l} \,t.
\end{align}
\end{prop}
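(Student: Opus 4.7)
The plan is to start from the Fourier comparison inequality \eqref{eq: Fourier estimate}, split the $\xi$-integration into a low-frequency region $\{|\xi|\le R\}$ and a high-frequency region $\{|\xi|>R\}$, control each piece by the two Fourier estimates already available, and then optimize the cutoff $R$ as a power of $t$. The estimates have opposite $t$-dependence: the low-frequency bound \eqref{eq: bounded region part} improves as $t\to 0$ thanks to the factor $t^{H\bar l}$ gained from the Taylor expansion error, while the high-frequency bound \eqref{eq: outer ball part} degenerates like $t^{-\mu}$. The trade-off is resolved by picking $R$ as a negative power of $t$, and then choosing the approximation order $l$ large enough so that the exponent on $t$ produced by the optimization is at least $1$. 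Since the rate of explosion $\mu$ in \eqref{eq: outer ball part} depends only on $H,V,l_0$ (not on $l$), but the gain $t^{H\bar l}$ grows linearly with $l$, taking $l$ sufficiently large is legitimate.

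Concretely, integrating \eqref{eq: bounded region part} over $\{|\xi|\le R\}$ yields, via a trivial volume estimate in $\mathbb{R}^N$, a contribution of order $R^{N+1} t^{H\bar l}$. For the tail, \eqref{eq: outer ball part} gives $|\hat p(t,x,\xi)|+|\hat p_l^\eta(t,x,\xi)|\le C t^{-\mu}|\xi|^{-(N+2)}$, and integrating $|\xi|^{-(N+2)}$ over $\{|\xi|>R\}$ in $\mathbb{R}^N$ produces a factor $R^{-2}$ via polar coordinates, giving a contribution of order $R^{-2} t^{-\mu}$. Thus for $t<\tau_1\wedge\tau$ (with $\tau_1$ from Lemma \ref{lem: bounded region part} and $\tau$ from Proposition \ref{prop: Gaussian upper bound for p} and Lemma \ref{lem: upper estimate for approximating density}),
\begin{equation*}
|p(t,x,y)-p_l^\eta(t,x,y)| \le C_{H,V,l}\bigl( R^{N+1}t^{H\bar l} + R^{-2} t^{-\mu} \bigr).
\end{equation*}

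Balancing the two terms by setting $R^{N+3}= t^{-\mu - H\bar l}$, that is, $R = t^{-(\mu+H\bar l)/(N+3)}$, yields the combined bound $|p - p_l^\eta| \le C_{H,V,l}\, t^{\alpha(l)}$ with exponent
\begin{equation*}
\alpha(l) \;=\; \frac{2H\bar l - (N+1)\mu}{N+3}.
\end{equation*}
Since $\mu$ is independent of $l$ (Lemma \ref{lem: upper estimate for approximating density} and Proposition \ref{prop: Gaussian upper bound for p}), and $\bar l\in(l,l+1)$ can be taken as large as desired, we fix $l$ large enough so that $\alpha(l)\ge 1$. For all $t\le\tau\triangleq\tau_1$ (possibly shrunk again so that $t^{\alpha(l)-1}\le 1$), this yields the desired estimate $|p(t,x,y)-p_l^\eta(t,x,y)|\le C_{H,V,l}\,t$, uniformly in $x,y\in\mathbb{R}^N$.

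The main obstacle in this plan is not the optimization itself, which is routine once the two Fourier bounds are in hand, but rather the justification that the exponent $\mu$ in Lemma \ref{lem: large xi part} can be chosen \emph{independently of} $l$. This is precisely the subtle point flagged in the remark following Lemma \ref{lem: upper estimate for approximating density}: without such uniformity in $l$, increasing $l$ to improve the low-frequency gain would be negated by a simultaneous worsening of the high-frequency tail, and the argument would collapse. Granting this uniformity (which rests on the careful Malliavin-covariance analysis referred to in \cite{GOT19}), the proof of \eqref{eq: comparison estimate} reduces to the two-line optimization above.
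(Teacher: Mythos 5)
Your proof is correct and follows the same strategy as the paper: split the Fourier integral from \eqref{eq: Fourier estimate} at a cutoff $R$, bound the low-frequency piece via Lemma \ref{lem: bounded region part} and the high-frequency piece via Lemma \ref{lem: large xi part}, optimize $R$ as a negative power of $t$, and exploit the $l$-independence of $\mu$ to push the resulting exponent of $t$ above $1$ by taking $l$ large. The only cosmetic difference is that you use the sharp tail estimate $\int_{|\xi|>R}|\xi|^{-(N+2)}d\xi\asymp R^{-2}$ and balance the two contributions symmetrically, whereas the paper uses the cruder bound $R^{-1}$ and sets $R=t^{-(\mu+1)}$; both choices lead to the rate $t$ once $l$ is chosen large enough.
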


\begin{proof}
Thanks to \eqref{eq: Fourier estimate} we can write 
\begin{align*}
|p(t,x,y)-p_{l}^{\eta}(t,x,y)|\leq\lp \int_{|\xi|\leq\mr}+\int_{|\xi|>\mr}\rp \lln \hat{p}(t,x,\xi)-\hat{p}_{l}^{\eta}(t,x,\xi)\rrn d\xi.
\end{align*}
Next we invoke the bounds (\ref{eq: bounded region part}) and (\ref{eq: outer ball part}), which allows us to write
\begin{align*}
  \lln p(t,x,y)-p_{l}^{\eta}(t,x,y)\rrn 
  \leq C_{1}\lp R^{N+1}t^{H\bar{l}}+t^{-\mu}\int_{|\xi|>R}|\xi|^{-N-2}d\xi\rp ,\ \ \ t\in [0,1],
\end{align*}
where we recall that $\bar{l}$ is a fixed number in $[l,l+1]$ introduced in Lemma \ref{lem: tail estimate for Taylor error}. Now an elementary change of variable yields
\begin{align}
 & \lln p(t,x,y)-p_{l}^{\eta}(t,x,y)\rrn \nonumber\\
 & \leq C_{1}\lp R^{N+1}t^{H\bar{l}}+t^{-\mu}R^{-1}\int_{|\xi|\geq1}|\xi|^{-(N+1)}d\xi\rp \nonumber\\
 & \leq C_{2}\lp R^{N+1}t^{H\bar{l}}+t^{-\mu}R^{-1}\rp .\label{eq: bound difference of densities mid step1}
\end{align}
We can easily optimize expression \eqref{eq: bound difference of densities mid step1} with respect to $R$ by choosing  $R=t^{-(\mu+1)}$ . It follows that
\begin{align}\label{eq: bound difference of densities mid step 2}
\lln p(t,x,y)-p_{l}^{\eta}(t,x,y)\rrn \leq C_{2}t^{-(N+1)(\mu+1)+H\overline{l}}+t,
\end{align}
for all  $t\in(0,1]$.  In addition, recall that a crucial point in our approach is that the exponent $\mu$ in \eqref{eq: bound difference of densities mid step 2} does not depend on $l$. Therefore  we can choose
$l\geq l_{0}$ large enough, so that 
\[
-(N+1)(\mu+1)+H\overline{l}\geq1.
\]
For this value of $l$, the upper bound \eqref{eq: comparison estimate} is easily deduced from \eqref{eq: bound difference of densities mid step 2}.


\end{proof}

\subsection{Completing the proof of Theorem \ref{thm: local lower estimate}}

Finally, we are in a position to complete the proof of Theorem \ref{thm: local lower estimate}.
Indeed, recall from (\ref{eq: lower estimate for approximating density})
and (\ref{eq: comparison estimate}) that for $x$ and $y$ such that $d(x,y)\leq t^H$ and $t<\tau$, we have
\begin{align}\label{f1}
p^\eta(t,x,y)\geq \frac{C_{1}}{|B_d(x,t^H)|},\quad\text{and}\quad |p(t,x,y)-p^\eta_l(t,x,y)|\leq C_{H,V,l}t.
\end{align}
In addition, owing to \eqref{eq:local-comparison}, for small $t$ we have
\begin{equation}\label{f2}
\frac{1}{|B_d(x,t^H)|}\geq\frac{C}{t^{HN/l_0}}.
\end{equation}
Putting together \eqref{f1} and \eqref{f2}, it is easily seen that when $t$ is small enough we have
\[
p(t,x,y)\geq\frac{C_{2}}{|B_{d}(x,t^{H})|}.
\]
Therefore, the proof of  Theorem \ref{thm: local lower estimate} is  complete.

\end{document}